\title{The cohomology of rank two stable bundle moduli:\\ mod two nilpotency \& skew Schur polynomials}
\author{Christopher Scaduto \& Matthew Stoffregen}
\date{}
\newcommand{\cn}{\mathsf{CN}}
\newcommand{\Z}{\mathbb{Z}}
\newcommand{\Q}{\mathbb{Q}}
\newtheorem{theorem}{Theorem}[section]
\newtheorem{prop}[theorem]{Proposition}
\newtheorem{lemma}[theorem]{Lemma}
\newtheorem{corollary}[theorem]{Corollary}
\begin{document}

\maketitle

\begin{abstract}
We compute cup product pairings in the integral cohomology ring of the moduli space of rank two stable bundles with odd determinant over a Riemann surface using methods of Zagier. The resulting formula is related to a generating function for certain skew Schur polynomials. As an application, we compute the nilpotency degree of a distinguished degree two generator in the mod two cohomology ring. We then give descriptions of the mod two cohomology rings in low genus, and describe the subrings invariant under the mapping class group action.
\end{abstract}

\vspace{.2cm}


\section{Introduction}

Let $\Sigma_g$ be a closed, oriented surface of genus $g$, and let $N_g$ be the moduli space of flat $SU(2)$ connections on $\Sigma_g$ having holonomy $-1$ around a single puncture. The space $N_g$ is smooth symplectic manifold of dimension $6g-6$, and twice the class of its symplectic form, denoted $\alpha$, is a generator of $H^2(N_g;\Z)$. If $\Sigma_g$ is given a complex structure, then $N_g$ may be identified with the moduli space of stable holomorphic bundles of rank 2 with fixed odd determinant.\\

The betti numbers of $N_g$ were first computed by Newstead \cite{newstead-top}, and Atiyah-Bott \cite{ab} later showed that $H^\ast(N_g;\Z)$ is torsion-free. Newstead also showed in \cite{newstead-char} that the cohomology ring is generated by integral classes $\alpha,\beta,\psi_1,\ldots,\psi_{2g}$ {\emph{over the rationals}}. Here $\beta$ is degree 4, and each $\psi_i$ is degree 3.
Newstead conjectured the relation $\beta^g=0$, which was proved by Thaddeus \cite{thaddeus-conformal} and Kirwan \cite{kirwan}. A beautiful presentation for the rational cohomology ring of $N_g$ was established by several \cite{baranovski, king-newstead, siebert-tian, zagier} following the work of Thaddeus \cite{thaddeus-conformal}.\\

The {\emph{nilpotency degree}} of an element $x$ in a ring is the smallest $n\geqslant 1$ such that $x^n=0$. In the integral cohomology ring, the nilpotency degree of $\beta$ is equal to $g$, while that of $\alpha$ is equal to $3g-4=\frac{1}{2}\dim N_g + 1$, since $\alpha$ is proportional to the symplectic form class. The situation is quite different with $\Z_2$-coefficients. First, the mod 2 reduction of $\alpha$ can be realized as $w_2(E)$ of an $SO(3)$-bundle $E$ over $N_g$ for which $\beta = p_1(E)$. By the general relation $w_2(E)^2 \equiv p_1(E)$ (mod 2),

\vspace{.1cm}
\[
    \alpha^2 \; \equiv \; \beta \mod 2.
\]
\vspace{.1cm}

\noindent In particular, $\beta$ is a redundant generator over $\Z_2$. Indeed, Atiyah-Bott \cite{ab} tell us that to generate the cohomology ring {\emph{over the integers}}, we need the classes $\alpha,\frac{1}{4}(\alpha^2-\beta),\psi_1,\ldots,\psi_{2g}$ and additional classes $\delta_1,\ldots,\delta_{2g-1}$. Here $\delta_i$ has degree $2i$. We will see that we only need the mod 2 reductions of $\alpha,\psi_1,\ldots,\psi_{2g}$ and $\delta_{2^i}$ for $2\leqslant 2^i\leqslant 2g-1$ in order to generate $H^\ast(N_g;\Z_2)$.\\

The moduli space $N_g$ embeds into the moduli space $M_g$ of projectively flat $U(2)$ connections on $\Sigma_g$ of fixed odd degree without fixed determinant. This is again a smooth symplectic manifold, now of dimension $8g-6$. It has a corresponding degree 2 class $a_1\in H^2(M_g;\Z)$ that restricts to $\alpha$. The nilpotency degrees of $\alpha$ and $a_1$ with $\Z_2$-coefficients are as follows.

\vspace{.6cm}

\begin{theorem}\label{thm:nilp} The nilpotency degree of $\alpha$ as viewed in $H^2(N_g;\Z_2)$ is equal to $g$:

\[  
\alpha^g\equiv 0 \text{{\emph{ (mod 2)}}}, \qquad \alpha^{g-1}\not\equiv 0 \text{{\emph{ (mod 2)}}}.
\]

\vspace{.25cm}

\noindent On the other hand, the nilpotency degree of $a_1$ as viewed in $H^2(M_g;\Z_2)$ is equal to $2g$:

\[  
a_1^{2g}\equiv 0 \text{{\emph{ (mod 2)}}}, \qquad a_1^{2g-1}\not\equiv 0 \text{{\emph{ (mod 2)}}}.
\]
\end{theorem}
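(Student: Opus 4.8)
The plan is to convert each nilpotency statement into a divisibility statement about intersection numbers, and then extract it from the skew--Schur formula for cup products computed in the body of the paper.

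For $N_g$, I would start from the fact that $H^\ast(N_g;\Z)$ is torsion free, so that $\alpha^k$ reduces to zero mod $2$ precisely when $\alpha^k\in 2H^{2k}(N_g;\Z)$, which by the perfect integral Poincar\'e pairing is equivalent to $\langle\alpha^k\cup y,[N_g]\rangle$ being even for every $y\in H^{6g-6-2k}(N_g;\Z)$. Writing $y$ in terms of the generators $\alpha,\beta,\psi_1,\dots,\psi_{2g}$ and using $\psi_i^2=0$, it suffices to take $y=\alpha^a\beta^b\psi_S$ for a set $S$ of distinct indices. Since $\alpha$, $\beta$ and the fundamental class are fixed by the mapping class group, which acts on $\mathrm{span}(\psi_1,\dots,\psi_{2g})$ through $\mathrm{Sp}(2g,\Z)$, the functional $y\mapsto\langle\alpha^k y,[N_g]\rangle$ is $\mathrm{Sp}(2g,\Z)$-invariant; and as the invariants of $H^\ast(N_g;\Q)$ in each degree are spanned by monomials in $\alpha,\beta$ and $\gamma=-2\sum_j\psi_j\psi_{j+g}$, this functional only sees the symplectically invariant component of $y$. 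Keeping track of the denominators in that projection --- the extreme one, from $S$ a union of $c$ hyperbolic pairs, being $2^{c}\,g!/(g-c)!$ --- identifies the nilpotency of $\alpha$ with the $2$-adic behaviour of the integers $\langle\alpha^a\beta^b\gamma^c,[N_g]\rangle\big/\big(2^{c}\,g!/(g-c)!\big)$.

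At this point I would feed in the main formula of the paper, which expresses $\langle\alpha^a\beta^b\gamma^c,[N_g]\rangle$ through a generating function in skew Schur polynomials. One then needs to check from the closed form that every normalized integer above with total $\alpha$-exponent $a\geqslant g$ is even (giving $\alpha^g\equiv 0$), and that for $a=g-1$, pairing against the largest admissible power of $\gamma$ produces an odd normalized integer (giving $\alpha^{g-1}\not\equiv 0$). For $g=2$ this specializes to the observation that $N_2$ is the intersection of two quadrics in $\mathbb P^5$, on which $\alpha$ is the hyperplane class, so $\langle\alpha^3,[N_2]\rangle=4$ is even while $\alpha$ generates $H^2$. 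The argument for $M_g$ runs through the same machine: torsion-freeness and Poincar\'e duality reduce $a_1^k\equiv 0$ to divisibility of $\langle a_1^k\cup y,[M_g]\rangle$, the mapping class group again acts symplectically (now also on the $2g$-dimensional space of degree-one classes coming from the Jacobian), and the symmetry reduction cuts the question down to intersection numbers governed by the corresponding skew--Schur formula for $M_g$. The new geometric ingredient is the fibration $M_g\to\mathrm{Jac}(\Sigma_g)$ with fibre $N_g$: integrating over the fibre rewrites these numbers as $N_g$-intersection numbers times a Jacobian (torus) factor, and it is precisely that factor that pushes the $2$-adic threshold from $g$ up to $2g$, yielding $a_1^{2g}\equiv 0$ and $a_1^{2g-1}\not\equiv 0$.

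I expect the main obstacle to be the $2$-adic valuation analysis of these intersection numbers --- extracting exact parities, not merely bounds, from the skew--Schur/Zagier formula --- together with the lattice bookkeeping ensuring that the primitive pairings against $\psi$-monomials (which carry the denominators $2^{c}g!/(g-c)!$, and potentially also contributions of the higher Atiyah--Bott generators $\delta_i$) never upset the vanishing at exponent $g$ for $N_g$, resp.\ $2g$ for $M_g$; equivalently, that the nilpotency of $\alpha$ in the $\mathrm{Sp}$-invariant subring already equals its nilpotency in all of $H^\ast(N_g;\Z_2)$, and likewise for $a_1$. For $M_g$ there is the additional task of making the fibre-integration precise enough to pin down the extra power of $2$ in the threshold rather than merely bound it.
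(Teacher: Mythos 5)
Your overall strategy---convert nilpotency into $2$-divisibility of intersection numbers via the unimodular Poincar\'e pairing on the torsion-free integral cohomology, and then extract parities from the generating-function formula---is exactly the strategy of the paper. But there is a genuine gap at the very first reduction: you take the dual test classes $y$ to be integral monomials $\alpha^a\beta^b\psi_S$ in the Newstead classes. Those classes generate $H^\ast(N_g)$ only \emph{over the rationals}; integrally one also needs $\tfrac14(\alpha^2-\beta)$ and the Atiyah--Bott classes $\delta_2,\dots,\delta_{2g-1}$ (Proposition \ref{prop:intng}). This is not a bookkeeping issue that can be absorbed into denominators. Indeed, by Thaddeus's formula (\ref{eq:alphabeta}) together with the genus recursion (\ref{eq:psi}), every pairing $\alpha^{g-1+a}\beta^{b}\prod_{\ell\in K}\psi_\ell\psi_{\ell+g}[N_g]$ carries a factor $2^{2(g-|K|)-2}$ which survives the von Staudt--Clausen denominator of the Bernoulli number, hence is \emph{even} for all admissible choices when $g\geqslant 2$. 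So your criterion, restricted to these $y$, would wrongly conclude $\alpha^{g-1}\equiv 0\pmod 2$. The non-vanishing is detected precisely by a class outside your test set: the paper shows $\alpha^{g-1}\delta_{2g-2}[N_g]$ is odd (Corollary \ref{cor:1pairng}), and $\delta_{2g-2}$ is not an integral polynomial in $\alpha,\beta,\psi_j$. The same objection applies to your $\mathrm{Sp}(2g,\Z)$-invariance reduction: the projection onto invariants is a rational operation, and over $\Z_2$ the invariant subring is a divided power algebra requiring the extra generators $\upsilon_{2^k}$ (Proposition \ref{prop:inv}), so tracking the ``extreme denominator'' $2^c g!/(g-c)!$ does not recover the mod $2$ information. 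The correct repair is the one the paper carries out: pair $\alpha^n$ against the full integral generating set including the $\delta_i$, which is exactly what Theorem \ref{thm:maincomputation} is built to handle (the $1/2^{k-1}$ prefactor against integral symmetric-function coefficients gives Proposition \ref{prop:2divalpha}, and a mod-$2$ analysis of $U(T)^g/Q(T)$ gives the odd pairing).

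For $M_g$ the gap widens. Your fibre-integration over the Jacobian is, in spirit, the covering $p:N_g\times J_g\to M_g$ of (\ref{eq:cov}), and it does account for the shift of the threshold from $g$ to $2g$ for pairings covered by the generating function. But the integral generators of $H^\ast(M_g;\Z)$ include $b_1^j$ and $b_2^j$, and pairings such as $a_1^{n}b_1^jb_2^{j+g}z_{\lambda_1}\cdots z_{\lambda_m}[M_g]$ are \emph{not} computed by the generating-function formulas nor killed by the symmetry argument of Proposition \ref{prop:bterms}. The paper has to dispose of these by a double induction (on genus and on the exponent of $a_1$) using the geometric Lemmas on the submanifolds $M_{IK}$ and the Gysin/Leray--Hirsch structure of the fibration $M_{IK}\to M_{g-1}$ (Lemmas preceding Proposition \ref{prop:2diva1}). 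Your sketch does not address these residual pairings, and without them the vanishing $a_1^{2g}\equiv 0$ is not established.
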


\vspace{.6cm}

\noindent To establish that $\alpha^g$ is zero mod 2, we consider its cup product pairings with monomials in the generators listed above. The pairing formula will be expressed as the extraction of a coefficient from a formal power series whose coefficients are symmetric functions. To state the result, it is convenient to introduce the rational cohomology classes $\xi_i$ on $N_g$ which satisfy:

\vspace{.3cm}

\begin{equation}
    \xi_i \; = \; \sum_{j=0}^{i} {2g-1-j \choose i-j} \left(-\frac{\alpha}{2}\right)^{i-j}\delta_j, \qquad \qquad \delta_i \; = \; \sum_{j=0}^{i} {2g-1-j \choose i-j} \left(\frac{\alpha}{2}\right)^{i-j}\xi_j.\label{eq:defxi}
\end{equation}

\vspace{.3cm}

\noindent More precisely, the left-hand formula in (\ref{eq:defxi}) may be taken as the definition of $\xi_i$, and the right-hand formula is the induced inverse relation between the $\delta_i$ and $\xi_i$ generators. Note that $2^{i}\xi_i$ is an integral cohomology class for $N_g$ of degree $2i$. Next, we let $e_i$ denote the $i^\text{th}$ elementary symmetric function, and $m_\lambda$ the monomial symmetric function associated to a partition $\lambda$. Define $U(T) = \sum_{n\geqslant 0} m_{( 2^n 1)}(-T)^n$ as a power series with coefficients in the ring of symmetric functions. Here, the notation $(2^n 1)$ stands for the partition with $1$ one and $n$ two's. Also define

\vspace{.3cm}

\[
    Q(T) \; = \; e_1 + e_3 T + e_5 T^2 + e_7 T^3 + \cdots \; = \; \sum_{n\geqslant 0} e_{2n+1} T^n.
\]

\vspace{.3cm}

\noindent We write $x[N_g]$ for the evaluation of a top-degree integral cohomology class $x$ against the fundamental class of $N_g$. The following, along with (\ref{eq:defxi}), computes the pairings $\delta_{\lambda_1}\delta_{\lambda_2}\cdots\delta_{\lambda_k}[N_g]$, and is the main technical result of the paper.

\vspace{.5cm}

\begin{theorem}\label{thm:maincomp} Suppose $\lambda = (\lambda_1,\lambda_2,\ldots,\lambda_n)$ is a partition of $3g-3$. Then we have:

\vspace{.2cm}
\[
        \xi_{\lambda_1}\xi_{\lambda_2}\cdots \xi_{\lambda_n}[N_g]  \; = \;  \frac{1}{2^{g-1}}\cdot \underset{m_{\lambda}T^{g-1}}{\text{{\emph{Coeff}}}} \Big[\,U(T)^g/Q(T)\,\Big]
\]
\end{theorem}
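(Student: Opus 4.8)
The plan is to start from the known intersection-number structure of $N_g$, reorganize it through generating functions, pass from the $\delta_i$ to the $\xi_i$ by the substitution encoded in (\ref{eq:defxi}), and identify the result with $U(T)^g/Q(T)$. The motivation for working with the $\xi_i$ is that (\ref{eq:defxi}) is exactly the formula for the Chern classes of a twist by a fractional line bundle. Writing $D(T)=\sum_{j\ge 0}\delta_j T^j$ and $X(T)=\sum_{i\ge 0}\xi_i T^i$ with $\delta_0=\xi_0=1$, summing the left-hand relation of (\ref{eq:defxi}) over $i$ collapses the binomial coefficients and yields the closed form
\[
X(T)\;=\;\bigl(1-\tfrac{\alpha}{2}T\bigr)^{2g-1}\,D\!\left(\frac{T}{1-\tfrac{\alpha}{2}T}\right)
\]
as an identity in $H^\ast(N_g;\Q)[[T]]$. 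Consequently $\xi_{\lambda_1}\cdots\xi_{\lambda_n}[N_g]$ is the coefficient of $T_1^{\lambda_1}\cdots T_n^{\lambda_n}$ in $\bigl(\prod_{k=1}^n X(T_k)\bigr)[N_g]$, and it suffices to evaluate the corresponding $D$-series pairings and then carry out the substitution $T_k\mapsto T_k/(1-\tfrac{\alpha}{2}T_k)$.

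Next I would assemble, following Zagier's method, a master generating function for the pairings $\bigl(\alpha^a\prod_j\delta_{\mu_j}\bigr)[N_g]$. Whatever the precise geometric origin of the $\delta_i$, these pairings are determined by the ring structure of $H^\ast(N_g;\Q)$ (Atiyah--Bott, Thaddeus, Zagier): one expresses $\delta_i$ in terms of $\alpha$, $\beta$, and the degree-three classes $\psi_1,\ldots,\psi_{2g}$, uses the mapping-class-group symmetry to reduce any monomial to one involving only $\alpha$, $\beta$ and the standard degree-six invariant $\gamma$, and invokes Zagier's closed-form evaluation of the $\alpha$--$\beta$--$\gamma$ intersection numbers (a residue, equivalently a Grothendieck trace for the complete-intersection quotient). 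Carrying the auxiliary variables $T_k$ along throughout, the output is naturally organized so that the $g$ handles of $\Sigma_g$ each contribute a common per-handle factor, raised to the $g$-th power, together with a single global factor.

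The heart of the proof is then to show that after the substitution $T_k\mapsto T_k/(1-\tfrac{\alpha}{2}T_k)$ and the passage to top degree, the entire $\alpha$-dependence together with all the binomial prefactors cancels, and the remaining symmetric-function data assembles exactly into $U(T)^g/Q(T)$: the per-handle factor becomes $U(T)=\sum_{n\ge 0}m_{(2^n1)}(-T)^n$, the global factor becomes $Q(T)^{-1}=\bigl(\sum_{n\ge 0}e_{2n+1}T^n\bigr)^{-1}$, and the overall scalar is $2^{-(g-1)}$. The cancellation of $\alpha$ is precisely what the definition of the $\xi_i$ is engineered to produce, so it should be forced once the substitution is made; the substantive point is to match the surviving combinatorial factors with the monomial- and elementary-symmetric-function series defining $U$ and $Q$.

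I expect this identification to be the main obstacle. The most robust route is induction on the genus: the intersection numbers of $N_g$ satisfy a recursion coming from degenerating $\Sigma_g$ (equivalently, from Zagier's recursive presentation of the ideal of relations), and one verifies (i) the base case $g=1$, where $N_1$ is a point, $\lambda$ is empty, and indeed $U(T)^1/Q(T)$ has constant term $e_1/e_1=1$; and (ii) that $U(T)^g/Q(T)$, transported through the $\xi$-substitution, obeys the same recursion, the factor $U(T)$ acquired in passing from genus $g-1$ to $g$ being exactly the per-handle contribution isolated above and the power of two transforming consistently. Step (ii) rests on a handful of symmetric-function identities for the $m_{(2^n1)}$ and for the series $\sum_n e_{2n+1}T^n$, which is where the bulk of the routine computation lies.
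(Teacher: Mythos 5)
Your overall framework---Zagier's method, generating functions whose coefficients are symmetric functions in auxiliary variables $T_k$, and a final symmetric-function identification---is the right one, and your closed form $X(T)=(1-\tfrac{\alpha}{2}T)^{2g-1}D\bigl(T/(1-\tfrac{\alpha}{2}T)\bigr)$ is correct. But there is a genuine gap at the step you yourself flag as the main obstacle: the mechanism by which the answer organizes itself into $U(T)^g/Q(T)$. Your proposed mechanism is induction on genus via the degeneration recursion, and that does not work as stated, for two reasons. First, the recursion coming from degenerating $\Sigma_g$ is Thaddeus's formula (\ref{eq:psi}), which computes pairings only after inserting factors $\psi_\ell\psi_{\ell+g}$; the pairing $\xi_{\lambda_1}\cdots\xi_{\lambda_n}[N_g]$ contains no $\psi$ classes, so the recursion gives you no handle on it directly. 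Second, even after such an insertion, what appears on the lower-genus side is the restriction of $\xi_{g,i}$ to $N_{g-1}$, which is \emph{not} $\xi_{g-1,i}$ (the classes $\delta_{g,i}$, hence $\xi_{g,i}$, depend on $g$ --- this is the ``Caution'' of Section \ref{sec:backg}), so the inductive hypothesis does not apply to the pairings produced. Any induction must therefore be run on a strictly stronger statement; this is exactly why Theorem \ref{thm:maincomputation} is stated for $\cn(Z_g|_{N_k})$ with $k\leqslant g$ and carries the correction factor $R(T)^{g-k}$.

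What actually produces the $g$-th power of $U$ and the factor $1/Q(T)$ in the paper is not a genus induction but Zagier's closed-form evaluation (Lemma \ref{lemma:zagier}) of the generating function $\sum_{i\geqslant 1}\bigl(f h^i e^{w\gamma^\ast+u\alpha}\bigr)[N_i](-T/4)^{i-1}$ over \emph{all} genera simultaneously, applied to the product of twisted Chern polynomials (\ref{eq:zagchern}): the denominator $\sinh\bigl(\sum_\ell\tanh^{-1}(x_\ell\sqrt{T})\bigr)$ is what becomes $u_0^{-1/2}\sqrt{T}\,Q(T)$, and $U=u_0u_1$ emerges from the numerator. Relatedly, your assertion that ``the entire $\alpha$-dependence cancels'' after the $\xi$-substitution is not what happens (e.g.\ $\xi_1=-\alpha/2$ and $\xi_1^{3g-3}[N_g]\neq 0$); the $\alpha$- and $\gamma^\ast$-dependence is consumed by the evaluation lemma, not cancelled by the twist. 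Your route through the $\delta$-pairings followed by substitution is workable in principle (the paper notes in Section \ref{sec:comps} that individual pairings can be computed this way term-by-term), but without a substitute for Lemma \ref{lemma:zagier} it yields the numbers one at a time rather than the closed form $U(T)^g/Q(T)$, which is the content of the theorem.
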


\vspace{.5cm}

\noindent We obtain a similar formula for pairings on $M_g$. Since $\delta_1$ is a non-zero multiple of $\alpha$, Theorem \ref{thm:maincomp} can be used to compute pairings involving both powers of $\alpha$ and $\delta_i$ classes. In the sequel, we will also write down pairing formulas involving the $\psi_i$ classes. The proofs of these pairing formulas follow the computational framework of Zagier \cite{zagier}, whose starting point was Thaddeus's formula for the intersection pairings involving the Newstead classes $\alpha,\beta,\psi_1,\ldots,\psi_{2g}$.\\

As pointed out to the authors by Ira Gessel, up to some renormalizing, the reciprocal power series $1/Q(T)$ is a generating function for the skew Schur functions associated to a particular family of skew partitions. We briefly explain this. Let $\lambda(n,m)$ be the partition $(n,\cdots,n,n-1,n-2,\ldots,2,1)$ where $n$ appears $m$ times. Note that $\lambda(n,0)=(n-1,n-2,\ldots,2,1)$. In general, if $\lambda$ and $\mu$ are paritions, the skew partition $\lambda/\mu$ is pictorially the result of drawing the Young tableau for $\lambda$ and deleting the part of the tableau given by $\mu$. See Figure \ref{fig:skewtableau}. To any skew tableau $\lambda/\mu$ there is defined a skew Schur symmetric function $s_{\lambda/\mu}$. We will explain in Section \ref{sec:skew} the following identity:

\begin{figure}[t]
\centering
\includegraphics[scale=.85]{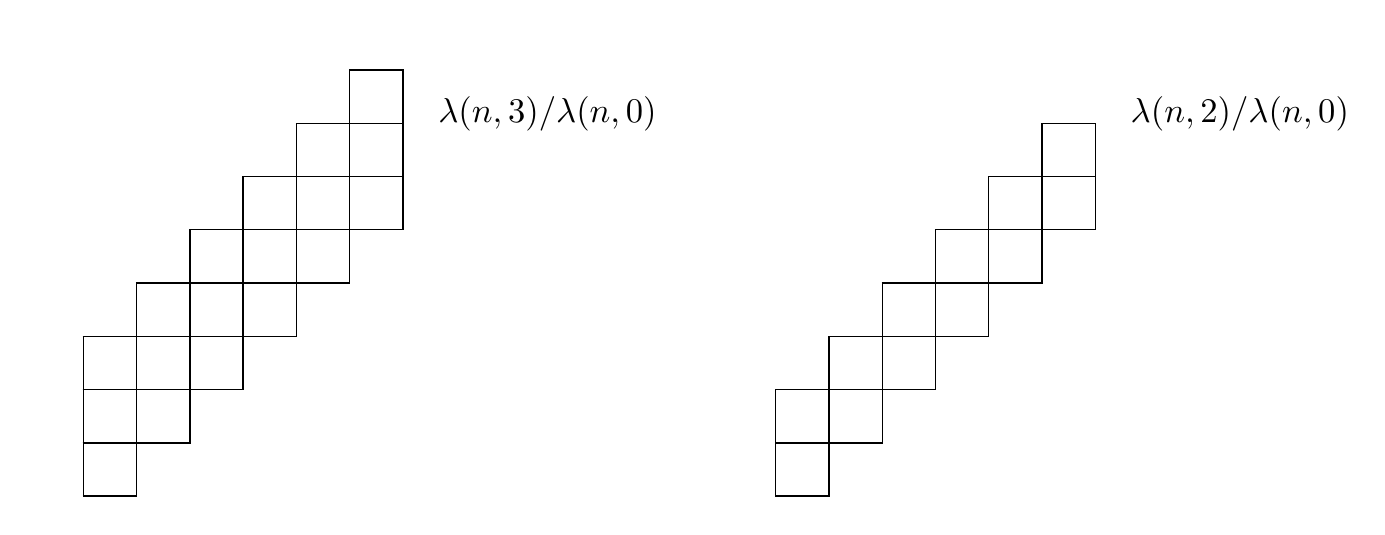}
\caption{{\small{With $n=6$, the skew tableaux appearing in the skew Schur functions and $1/Q(T)$ and $1/E(T)$.}}}\label{fig:skewtableau}
\end{figure}

\vspace{.3cm}

\begin{equation}
    1/Q(T) \; = \; \sum_{n \geqslant 0} e_1^{-n-1} s_{\lambda(n,3) / \lambda(n,0)} (-T)^n.\label{eq:qinv}
\end{equation}

\vspace{.3cm}

\noindent As the power series $U(T)$ is comparatively simple, we see that the complexity of the cup product pairings among the $\delta_i$ classes comes from the skew Schur functions $s_{\lambda(n,3) / \lambda(n,0)}$.\\

Theorem \ref{thm:maincomp} allows us to explicitly describe the ring $H^\ast(N_g;\Z_2)$ for low genus, and we do this in Section \ref{sec:comps}. Ideally, we would like to find presentations for these rings that are as nice as the recursive presentations for $H^\ast(N_g;\Q)$ as found by \cite{baranovski, king-newstead, siebert-tian, zagier}. The situation for non-rational coefficients seems more complicated, however, as our computations suggest.\\

The manifold $N_g$ may be given complex structure, and is in fact an example of a smooth Fano variety. In particular, in place of the $\delta_i$ above, we may consider the products of its Chern classes. Here another power series $E(T)$ with coefficients symmetric functions appears:

\vspace{.3cm}

\begin{equation*}
    E(T) \; = \; 1 + e_2T + e_4T^2 +e_6T^3+\cdots = \sum_{n\geqslant 0}e_{2n}T^n.
\end{equation*}

\vspace{.3cm}

\noindent The analogue of (\ref{eq:qinv}) is the relation $1/E(T)=\sum_{n\geqslant 0}s_{\lambda(n,2)/\lambda(n,0)}(-T)^n$. We then have the following, whose proof is very similar to the proof of Theorem \ref{thm:maincomp}:

\vspace{.5cm}

\begin{theorem}\label{thm:mainchern} Suppose $\lambda = (\lambda_1,\lambda_2,\ldots,\lambda_n)$ is a partition of $3g-3$. Set $c_i=c_i(TN_g)$. Then:

\vspace{.2cm}

\[
        c_{\lambda_1}c_{\lambda_2}\cdots c_{\lambda_n}[N_g]  \; = \;  (-2)^{3g-3}\cdot \underset{m_{\lambda}T^{g-1}}{\text{{\emph{Coeff}}}} \Big[\,U(T)^g/Q(T)E(T)\,\Big]
\]
\end{theorem}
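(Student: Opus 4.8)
The plan is to reduce the Chern class pairing to the already-established pairing formula of Theorem \ref{thm:maincomp} by expressing the Chern classes $c_i = c_i(TN_g)$ in terms of the classes $\xi_i$ (equivalently the $\delta_i$) together with $\alpha$ and the $\psi_i$. The key input is Newstead's description of the tangent bundle of $N_g$: there is a known formula for the total Chern class $c(TN_g)$ — or equivalently for the Chern character — in terms of the universal $SO(3)$-bundle, which in cohomological terms expresses the $c_i$ through $\alpha$, $\beta = \alpha^2 - 4\cdot\frac14(\alpha^2-\beta)$, the $\psi_i$, and the $\delta_i$. Concretely, one writes the Chern roots of $TN_g$ formally and converts the resulting symmetric-function expression into the generating-function language already set up: the factor $E(T) = \sum_{n\geqslant 0} e_{2n}T^n$ is precisely the even-degree part of $\prod(1+x_iT)$, and its appearance in the denominator reflects a "half" of the Chern-class data, exactly parallel to how $Q(T) = \sum e_{2n+1}T^n$ encodes the $\delta_i$ via (\ref{eq:qinv}).

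The steps, in order, are as follows. First, I would recall (from Newstead \cite{newstead-char} or Thaddeus \cite{thaddeus-conformal}) the precise expression for $c(TN_g)$ in terms of $\alpha,\beta,\psi_i$ and, after substituting $\beta \equiv \alpha^2$ behavior appropriately, in terms of the $\xi_i$; the point is that the generating function $\sum_i c_i T^i$ factors, up to the substitution $\delta \leftrightarrow \xi$ of (\ref{eq:defxi}), as a product in which one factor matches the $U(T)^g$ numerator contribution and the new factor is $1/E(T)$. Second, I would feed this expansion into Theorem \ref{thm:maincomp}: writing each monomial $c_{\lambda_1}\cdots c_{\lambda_n}$ as a $\Z$-linear combination of monomials $\xi_{\mu_1}\cdots\xi_{\mu_k}$ (of total degree $3g-3$) with coefficients that are themselves coefficients of a generating function, and then applying the $\underset{m_\mu T^{g-1}}{\mathrm{Coeff}}[U(T)^g/Q(T)]$ formula term by term. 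Third, I would repackage the resulting double sum as a single coefficient extraction; the bookkeeping for this is a symmetric-function manipulation showing that multiplying the series $U(T)^g/Q(T)$ by the extra factor coming from the Chern classes yields exactly $U(T)^g/Q(T)E(T)$, and tracking the powers of $2$ (the $1/2^{g-1}$ from Theorem \ref{thm:maincomp} together with the $2^i$ normalizations of the $\xi_i$ and the factors of $2$ in the tangent-bundle formula) produces the stated prefactor $(-2)^{3g-3}$.

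The main obstacle I anticipate is the second and third steps: correctly organizing the change of basis from $\{c_i\}$ to $\{\xi_i\}$ so that it interacts cleanly with coefficient extraction. A naive substitution produces a messy multi-index sum, and the crux is to recognize — as in Zagier's framework \cite{zagier} — that this sum telescopes into a clean product of power series. Getting the factor $1/E(T)$ to appear (rather than $E(T)$ or some variant) requires care about which "half" of the Chern roots is being recorded and about signs; the sign $(-T)^n$ conventions and the relation $1/E(T) = \sum_{n\geqslant 0} s_{\lambda(n,2)/\lambda(n,0)}(-T)^n$ must be matched precisely. Once the generating-function identity $U(T)^g/Q(T)E(T)$ is in hand, verifying the power-of-two normalization is a routine but delicate degree count, using that $\lambda$ is a partition of $3g-3 = \tfrac12\dim N_g$ so that $2^{|\lambda|} = 2^{3g-3}$ accounts for the integrality rescalings. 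Finally, I would sanity-check the formula against the known value $c_1(TN_g) = 2\alpha$ (since $N_g$ is Fano with this anticanonical class) and against low-genus cases computed directly.
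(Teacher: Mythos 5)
Your proposal has a genuine gap at exactly the point you flag as the ``main obstacle'': the repackaging step that is supposed to produce the factor $1/E(T)$ is not an innocuous bookkeeping exercise, and the mechanism you sketch for it is not the one that actually works. The difficulty is that the relation between $c(TN_g)$ and the $\xi_i$ is not a linear change of basis that can be fed term-by-term into Theorem \ref{thm:maincomp}. The actual relation (Zagier) is $c(TN_g)_x = (1-\beta x^2)^{-g}\,c(Z_g|_{N_g})_{-2x}^2$, so the product $\prod_\ell c(TN_g)_{x_\ell}$ differs from $\prod_\ell c(Z_g)_{-2x_\ell}$ by squaring the exponential part and dividing by $u_0^g=\prod_\ell(1-\beta x_\ell^2)^g$ \emph{inside the cohomology ring}, where $\beta$ is a cohomology class. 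Pairing against $[N_g]$ does not commute with this operation, so knowing all the numbers $\xi_{\mu_1}\cdots\xi_{\mu_k}[N_g]$ from Theorem \ref{thm:maincomp} determines the Chern numbers only ``in principle''; the telescoping into a single clean coefficient extraction is precisely the content to be proved, and your proposal does not supply it.

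The paper does not reduce to Theorem \ref{thm:maincomp} at all. It reruns the entire slant-product/Zagier-Lemma computation with $c(TN_g)_x$ as input: one forms $F_0=\prod_\ell c(TN_g)_{x_\ell} = u_0^{g-1}\exp\bigl(2(u_3-u_1)\gamma^\ast/\beta+2u_1\alpha\bigr)$ and applies Lemma \ref{lemma:zagier} with $u=2u_1$, $w=2(u_3-u_1)/\beta$. The factor of $2$ in the exponent doubles the argument of the $\sinh$ in the denominator of Zagier's formula, and $E(T)$ enters through the identity
\begin{equation*}
\sinh\Bigl(2\sum_{\ell\geqslant 1}\tanh^{-1}\bigl(x_\ell\sqrt{T}\bigr)\Bigr)
\;=\;\frac{1}{2}\prod_{\ell}\frac{1+x_\ell\sqrt{T}}{1-x_\ell\sqrt{T}}-\frac{1}{2}\prod_{\ell}\frac{1-x_\ell\sqrt{T}}{1+x_\ell\sqrt{T}}
\;=\;\frac{2\,Q(T)E(T)\sqrt{T}}{u_0(T)},
\end{equation*}
i.e., as ``twice the odd part times the even part'' of $\prod_\ell(1+x_\ell\sqrt{T})$, in contrast to the single square-root-exponent computation in Theorem \ref{thm:maincomputation} which yields only $Q(T)$. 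Your intuition that $E(T)$ is the even-degree companion of $Q(T)$ is correct, but it appears via this $\sinh(2\,\cdot)$ doubling, not via a ``half of the Chern-class data'' in a change of basis. To repair your argument you would either need to prove the (nontrivial, plethystic-flavored) symmetric-function identity your step three requires, or simply adopt the paper's route and redo the Zagier computation with the tangent-bundle Chern polynomial. Your sanity check $c_1(TN_g)=2\alpha$ and the parity count giving $(-2)^{3g-3}=(-1)^{g-1}2^{3g-3}$ are both consistent with the paper.
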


\vspace{.5cm}

\noindent This theorem has the following application. From \cite[\S 4]{newstead-char}, we know that the total Pontryagin class of $N_g$ is equal to $(1+\beta)^{2g-2}$. The relation $\beta^g=0$ mentioned above then implies that all Pontryagin numbers of $N_g$ vanish. It is easy to see from Theorem \ref{thm:mainchern} that the Chern numbers of $N_g$ are all even, and hence all Stiefel-Whitney numbers of $N_g$ vanish. A theorem of Wall \cite{wall} says that two closed, oriented manifolds are oriented-cobordant if and only if they have the same Pontryagin and Stiefel-Whitney numbers. We then deduce the following, which we suspect was already known, but for which we could not find a reference:

\vspace{.5cm}

\begin{corollary}\label{cor:nullcob} The manifold $N_g$ is oriented null-cobordant.
\end{corollary}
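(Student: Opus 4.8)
The plan is to verify the two hypotheses of Wall's criterion \cite{wall}: a closed oriented manifold bounds an oriented manifold precisely when all of its Pontryagin numbers and all of its Stiefel--Whitney numbers vanish, so I would check both families for $N_g$ (assuming $g\geqslant 2$, so that $3g-3\geqslant 1$). For the Pontryagin numbers, recall from \cite[\S 4]{newstead-char} that the total Pontryagin class of $N_g$ is $(1+\beta)^{2g-2}$, whence $p_i(N_g)=\binom{2g-2}{i}\beta^i$. When $g$ is even, $\dim N_g=6g-6\equiv 2 \pmod 4$ and there are no Pontryagin numbers. When $g$ is odd, each Pontryagin number $p_{i_1}(N_g)\cdots p_{i_k}(N_g)[N_g]$ --- indexed by partitions $(i_1,\dots,i_k)$ of $(3g-3)/2$ --- is an integer multiple of $\beta^{(3g-3)/2}[N_g]$; since $g$ odd and $g\geqslant 2$ force $g\geqslant 3$ and hence $(3g-3)/2\geqslant g$, while $\beta^g=0$ by \cite{thaddeus-conformal,kirwan}, all these numbers vanish.

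For the Stiefel--Whitney numbers I would use that $N_g$ carries a complex structure: then $w_{2i}(N_g)$ is the mod $2$ reduction of $c_i(TN_g)$ and the odd Stiefel--Whitney classes of $N_g$ vanish, so every Stiefel--Whitney number of $N_g$ either is zero (if some index is odd) or is the mod $2$ reduction of a Chern number $c_{\lambda_1}c_{\lambda_2}\cdots c_{\lambda_n}[N_g]$ for some partition $\lambda$ of $3g-3$. It then remains to see that every such Chern number is even. By Theorem \ref{thm:mainchern} it equals $(-2)^{3g-3}$ times the coefficient of $m_\lambda T^{g-1}$ in $U(T)^g/Q(T)E(T)$, so since $3g-3\geqslant 1$ it suffices to show that this coefficient is an integer. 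Now $E(T)$ has constant term $1$ and integral coefficients, hence so does $1/E(T)$; and by $(\ref{eq:qinv})$ --- equivalently by $Q(T)^{-1}=\sum_{n\geqslant 0}(-1)^n e_1^{-n-1}\, s_{\lambda(n,3)/\lambda(n,0)}\, T^n$ with the skew Schur functions integral --- the only denominators occurring in $U(T)^g/Q(T)E(T)$ are powers of $e_1$. I would then check that these cancel in the coefficient of $T^{g-1}$: the coefficient of $T^a$ in $U(T)^g$ is a sum of products $m_{(2^{n_1}1)}\cdots m_{(2^{n_g}1)}$ over compositions $(n_1,\dots,n_g)$ of $a$, and for each such product at least $g-a$ of its factors equal $m_{(1)}=e_1$, so that coefficient is divisible by $e_1^{g-a}$ for $0\leqslant a\leqslant g$; in forming the $T^{g-1}$-coefficient of $U(T)^g/Q(T)E(T)$ this $e_1^{g-a}$ exactly absorbs the worst denominator $e_1^{-(g-a)}$ of the matching $T^{g-1-a}$-coefficient of $Q(T)^{-1}E(T)^{-1}$. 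Hence the $T^{g-1}$-coefficient of $U(T)^g/Q(T)E(T)$ is an integral symmetric function, its $m_\lambda$-coefficient is an integer, and the Chern number is $(-2)^{3g-3}$ times an integer, so even. Thus all Stiefel--Whitney numbers of $N_g$ vanish.

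With all Pontryagin and Stiefel--Whitney numbers of $N_g$ matching those of the empty manifold, Wall's theorem \cite{wall} yields the corollary. The only step requiring real care is the integrality --- or even just the $2$-integrality --- of the coefficient extracted from Theorem \ref{thm:mainchern}; granted that, the result is a direct assembly of Theorem \ref{thm:mainchern}, Newstead's formula for the total Pontryagin class, the relation $\beta^g=0$, and Wall's cobordism theorem.
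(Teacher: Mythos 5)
Your proposal is correct and follows the same route as the paper, which only sketches this argument in the introduction (Newstead's formula $(1+\beta)^{2g-2}$ plus $\beta^g=0$ for the Pontryagin numbers, evenness of the Chern numbers via Theorem \ref{thm:mainchern} for the Stiefel--Whitney numbers, then Wall). The extra care you take with the $2$-integrality of the extracted coefficient is exactly the point the paper disposes of in its remark following Theorem \ref{thm:maincomputation} (the $e_1^{g-a}$ divisibility of the $T^a$-coefficient of $U(T)^g$ absorbing the $e_1$-denominators of $1/Q(T)$), and your handling of the additional factor $1/E(T)$, which has honest $\Lambda$-coefficients, is right.
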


\vspace{.5cm}

We make a few historical remarks. The classes $\delta_i$ are Chern classes of the direct image of a universal rank two complex bundle over the moduli space $N_g$. The Riemann-Roch formula gives expressions for its Chern classes in terms of the more basic classes $\alpha,\beta,\psi_j$. The direct image bundle has rank $2g-1$, and so the expressions one obtains for Chern classes in degrees higher than $2g-1$ are relations in the cohomology ring. Mumford is usually credited with conjecturing that these expressions, at least in the case of $M_g$, form a complete set of relations, see \cite[p. 582]{ab}. This was proved by Kirwan \cite{kirwan}. The beautiful recursive presentation for the rational cohomology ring found later by \cite{baranovski, king-newstead, siebert-tian, zagier} uses relations that are most naturally viewed as Chern classes of a bundle over $N_g$ induced by an embedding into a Grassmannian, see e.g. \cite[\S 1]{siebert-tian}. However, Zagier shows at the end of Section 6 in \cite{zagier} that they can also be recovered from the Chern classes of the direct image bundle.\\

The work presented here is motivated by a problem in instanton homology with mod two coefficients, and in particular, the analogue of Mu\~{n}oz's work \cite{munoz-ring} in characteristic two. The mod 2 instanton homology of a surface times a circle with non-trivial $SO(3)$-bundle should be a deformation of the ring $H^\ast(N_g;\Z/2)$, and should agree with a version of the quantum cohomology of the symplectic manifold $N_g$ with mod 2 coefficients. We expect the nilpotency degree of $\alpha$ as viewed in this deformation, perhaps in the ring modulo the $\psi_i$ classes, to be related to homology cobordism invariants defined in unpublished work by Fr\o yshov using mod 2 instanton homology. The analogue in rational coefficients is the nilpotency degree $\lceil g/2 \rceil$ of $\beta$ mod $\gamma$ that appears in F\o yshov's inequality \cite[Thm. 1]{froyshov} for his $h$-invariant. See also the related paper \cite{sc}. The authors plan to return to these motivations in forthcoming work. \\

In a spin-off article, we will use the computations here to study the mod two betti numbers of the {\emph{framed}} moduli space, which is an $SO(3)$ bundle over $N_g$. These betti numbers are determined by the ranks of the maps on $H^i(N_g;\Z_2)$ given by cup-product with the degree two class $\alpha$.\\

\vspace{0.7cm}

\noindent \textbf{Outline.} Background is provided in Section \ref{sec:backg}, as well as several useful results regarding generators for the cohomology rings of $N_g$ and $M_g$ for different coefficient rings. In Section \ref{sec:intersection} we review Thaddeus's pairing formula for the Newstead generators. In Section \ref{sec:maincomp} we prove Theorems \ref{thm:maincomp} and \ref{thm:mainchern} and discuss some of their implications, as well as the relationship with skew Schur functions. In Section \ref{sec:nilp} we prove Theorem \ref{thm:nilp}. Finally, in Section \ref{sec:comps} we present computations obtained using Theorem \ref{thm:maincomp}, and describe the mod two cohomology rings of $N_g$ for low genus.

\vspace{.7cm}

\noindent \textbf{Acknowledgments.} The authors thank Ira Gessel for a very helpful email correspondence. The first author was supported by NSF grant DMS-1503100.

\vspace{.6cm}

\newpage

\section{Background \& Generators}\label{sec:backg} In this section we describe sets of generators for the cohomology rings of $N_g$ and $M_g$ for different coefficient rings. We also write down generators for the subring of the cohomology of $N_g$ invariant under the mapping class group action. As we proceed, we will introduce some necessary background, but see \cite{thaddeus-intro} for a more proper introduction. We take a moment to emphasize here an important point about our notation regarding the generators $\delta_i$ and $d_i$ introduced below. Singling out a handle of the surface $\Sigma_g$ induces embeddings of $N_{g-1}$ and $M_{g-1}$ into $N_g$ and $M_{g}$, respectively.\\

\vspace{.3cm}

\noindent \textbf{Caution:} {\emph{The restriction of $\delta_i \in H^{2i}(N_g;\Z)$ is not equal to $\delta_i \in H^{2i}(N_{g-1};\Z)$. Similarly, the restriction of $d_i \in H^{2i}(M_g;\Z)$ is not equal to $d_i \in H^{2i}(M_{g-1};\Z)$.}}\\

\vspace{.3cm}

\noindent For this reason, in the sequel we sometimes write $\delta_{g,i}$ and $d_{g,i}$ for $\delta_i$ and $d_i$, respectively. Finally, we mention that the contents of this section are derived mostly from Atiyah-Bott \cite{ab}, with the help of some additional observations.

\vspace{.65cm}

\subsection{Integral generators for the cohomology of $M_g$}\label{sec:mg}

We begin by defining the Atiyah-Bott generators for the ring $H^\ast(M_g;\Z)$. Central to the discussion is a universal rank two holomorphic bundle $U_g \longrightarrow M_g \times \Sigma_g$. There is an ambiguity in the choice of this bundle: tensoring by any holomorphic line bundle over $M_g\times \Sigma_g$ produces another, possibly non-isomorphic, universal bundle. Atiyah-Bott fix their choice of universal bundle by starting with any universal $U_g$ and defining the following normalized bundle:

\vspace{.18cm}
\begin{equation}
    V_g \; := \; U_g\otimes f^\ast\left(\det\left( U_g|_{M_g}\right)^{\otimes g}\otimes \det\left(f_!U_g\right)\right).
\end{equation}
\vspace{.1cm}

\noindent Here and throughout, $f$ denotes the projection from $M_g\times \Sigma_g$ onto $M_g$. The notation $f_!U_g$ denotes the direct image of $U_g$, which in our situation is a genuine holomorphic bundle of rank $2g-1$, with its fiber over $y\in M_g$ equal to $H^0(M_g;U_g|_y)$. We remind the reader of the Grothendieck-Riemann-Roch theorem in this setting: writing $\omega\in H^2(\Sigma_g;\Z)$ for the orientation class of the surface $\Sigma_g$, for any holomorphic vector bundle $W$ lying over $M_g\times\Sigma_g$ we have

\vspace{.25cm}
\begin{equation}
    \text{ch}\left(f_!W\right) \; = \; f_\ast\left(\text{ch}(W)\left( 1 - (g-1)\omega\right)\right). \label{eq:riemannroch}
\end{equation}
\vspace{.25cm}

\noindent From this one can obtain expressions for the Chern classes $c_i(f_!V_g)$ in terms of expressions for the Chern classes $c_i(V_g)$. Since $V_g$ itself is rank two, the only non-zero Chern classes are $c_1(V_g)$ and $c_2(V_g)$. The first of these may be written as follows:

\vspace{.25cm}
\begin{equation}
    c_1(V_g) = a_1\otimes 1 + \sum_{j=1}^{2g}b_1^j\otimes f_j + (4g-3)\otimes \omega.\label{eq:c1}
\end{equation}
\vspace{.25cm}

\noindent Here we are using the K\"{u}nneth decomposition of $H^\ast(M_g\times \Sigma_g)$, and we have fixed a symplectic basis $f_1,\ldots,f_{2g}$ of $H^1(\Sigma_g;\Z)$, such that, for $1\leqslant i\leqslant g$, we have $f_if_{i+g} =\omega$ and $f_if_j =0 $ for $j\neq i+g$. Next, the second Chern class may be written as follows:

\vspace{.25cm}
\begin{equation}
    c_2(V_g) = a_2\otimes 1 + \sum_{j=1}^{2g}b_2^j\otimes f_j + \left((2g-1)a_1-\sum_{j=1}^{g}b_1^jb_1^{j+g}\right)\otimes \omega.\label{eq:c2}
\end{equation}
\vspace{.25cm}

\noindent The terms appearing in front of $\omega$ are computed in \cite{ab}. Other than these tail terms, the expressions (\ref{eq:c1}) and (\ref{eq:c2}) implicitly define the following elements:

\vspace{.25cm}
\[
    a_1 \in H^2(M_g;\Z), \qquad a_2 \in H^4(M_g;\Z), \qquad b_1^j \in H^1(M_g;\Z), \qquad b_2^j \in H^3(M_g;\Z),
\]
\vspace{.25cm}

\noindent in which $1\leqslant j \leqslant 2g$. Next, we use the direct image bundle to define the following classes:

\vspace{.25cm}
\[
    d_i \; = \; d_{g,i} \; := \; c_i(f_! V_g) \; \in H^{2i}(M_g;\Z), \qquad 1\leqslant i \leqslant 2g-1.
\]
\vspace{.25cm}

\noindent We remark that the Riemann-Roch formula (\ref{eq:riemannroch}) implies $d_{1} = (g-1)a_1$, which may be written more explicitly as $c_1(f_!V_g) = (g-1)c_1( V_g|_{M_g})$. This is briefly explained below. As warned in the introduction to this section, in contrast to the classes $a_1,a_2,b_1^j,b_2^j$, the restriction of $d_{g,i}$ to $M_{g-1}$ is {\emph{not}} equal to $d_{g-1,i}$. This is evident for $d_1$, as just seen, and will be clear more generally from the formulas below. We now state the fundamental result due to Atiyah-Bott:

\vspace{.55cm}
\begin{theorem}[\cite{ab} Thm 9.11]\label{thm:ab}
    The elements $a_1,a_2,b_1^j,b_2^j,d_{i}$ generate the ring $H^\ast(M_g;\Z)$, where the indices run over $1\leqslant j \leqslant 2g$ and $2\leqslant i \leqslant 2g-1$.
\end{theorem}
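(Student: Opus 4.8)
The natural route is the equivariant Morse theory of Atiyah and Bott, and I will sketch its structure, indicating where the substantive work lies. Fix a smooth Hermitian bundle $\mathcal{E}$ of rank two and odd degree on $\Sigma_g$, let $\mathcal{A}$ be the (contractible, affine) space of unitary connections on $\mathcal{E}$, and let $\mathcal{G}$ be the unitary gauge group. Because the degree is odd, semistability coincides with stability; the space $\mathcal{A}$ carries the Yang--Mills functional, whose stratification by Harder--Narasimhan (Shatz) type has minimal stratum $\mathcal{A}^{ss}$ the locus of central-curvature connections, on which $\mathcal{G}$ acts with constant stabilizer the central scalars $U(1)$, and with $M_g = \mathcal{A}^{ss}/\mathcal{G}$. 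Since $\mathcal{A}$ is contractible, $H^\ast_{\mathcal{G}}(\mathcal{A};\Z) = H^\ast(B\mathcal{G};\Z)$, and a first step is to record that this ring is generated by the K\"unneth components of $c_1$ and $c_2$ of the universal bundle over $B\mathcal{G}\times\Sigma_g$, together with one further class in degree two (the $\omega$-component of $c_2$ before normalization) --- these are exactly the classes entering (\ref{eq:c1}) and (\ref{eq:c2}).

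The crux of the argument is that the restriction
\[
H^\ast_{\mathcal{G}}(\mathcal{A};\Z)\;\longrightarrow\; H^\ast_{\mathcal{G}}(\mathcal{A}^{ss};\Z)
\]
is surjective. This is the equivariant perfection, over $\Z$, of the Yang--Mills stratification of $\mathcal{A}$ --- equivalently, Kirwan surjectivity for the moment-map picture in which $M_g$ is the symplectic quotient. Granting it, coprimality identifies $H^\ast_{\mathcal{G}}(\mathcal{A}^{ss};\Z)$ with $H^\ast(M_g;\Z)\otimes H^\ast(BU(1);\Z)$ as a module over $H^\ast(M_g;\Z)$; killing the degree-two polynomial generator of the $BU(1)$ factor then exhibits $H^\ast(M_g;\Z)$ as a quotient of $H^\ast(B\mathcal{G};\Z)$, so it is generated by the images of the generators above. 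I expect this to be the main obstacle: it is the content of \cite[\S7--9]{ab}, resting on the gauge-theoretic analysis of Yang--Mills flow together with the lemma that the normal bundles to the strata are $\mathcal{G}$-equivariantly oriented and have equivariant Euler classes that are not zero divisors in the relevant equivariant cohomology rings, so that the Thom--Gysin sequences split. The torsion-freeness of $H^\ast(M_g;\Z)$, proved along the way, is what legitimizes working over $\Z$.

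It remains to match these generators with $a_1,a_2,b_1^j,b_2^j$ and $d_2,\dots,d_{2g-1}$. The K\"unneth components of $c_1(V_g)$ and $c_2(V_g)$ supply $a_1,a_2,b_1^j,b_2^j$ at once, so the one remaining degree-two generator of $H^\ast(B\mathcal{G};\Z)$ is all that must still be accounted for. Here I would invoke Grothendieck--Riemann--Roch (\ref{eq:riemannroch}) to write the classes $d_i = c_i(f_!V_g)$ as universal polynomials in the $a$'s, the $b$'s, and that last generator, and conversely to solve for the last generator in terms of $a_1$ and finitely many $d_i$. Since $f_!V_g$ has rank $2g-1$ one has $c_i(f_!V_g)=0$ for $i>2g-1$, while $d_1=(g-1)a_1$ is redundant, which leaves precisely the range $2\leqslant i\leqslant 2g-1$ asserted in the statement. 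Combining these identifications with the surjectivity above gives the theorem; alternatively one may simply quote \cite[Thm 9.11]{ab}, whose generating set agrees with this one after the same Grothendieck--Riemann--Roch change of variables.
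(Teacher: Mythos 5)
The paper does not prove this statement: it is quoted verbatim from Atiyah--Bott, with the provenance of the generators sketched afterwards in Section \ref{sec:backg}. So the comparison is really with the Atiyah--Bott argument, which your proposal attempts to reconstruct. The Morse-theoretic skeleton you describe --- contractibility of $\mathcal{A}$, the Harder--Narasimhan stratification of the Yang--Mills functional, equivariant perfection via the non-zero-divisor property of the equivariant Euler classes of the normal bundles, surjectivity of $H^\ast_{\mathcal{G}}(\mathcal{A};\Z)\to H^\ast_{\mathcal{G}}(\mathcal{A}^{ss};\Z)$, and the splitting off of the central $BU(1)$ using coprimality of rank and degree --- is a faithful account of the hard part of their proof and of what this paper relies on.

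The gap is in your first step, the description of the generators of $H^\ast(B\mathcal{G};\Z)$. The claim that this ring is generated by the K\"unneth components of $c_1$ and $c_2$ of the universal bundle, i.e.\ by finitely many classes of degree at most four, is Atiyah--Bott's Proposition 2.20 \emph{over the rationals}; it fails over $\Z$. Integrally, $H^\ast(B\overline{\mathscr{G}};\Z)$ contains, via the homologically trivial fibration with fibre $\Omega U(2)\simeq \Omega S^3$, a tensor factor isomorphic to the divided polynomial algebra $\Gamma_\Z[x]$ with $\deg x_1=2$, and this algebra is not generated by its degree-two part: one needs the divided powers $x_i=x_1^i/i!$ in every even degree. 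That is precisely why the theorem lists $d_i$ for all $2\leqslant i\leqslant 2g-1$ rather than a single additional class, and it is the content of the discussion of $\Omega S^3$ in Section \ref{sec:backg}. Your proposed repair via Grothendieck--Riemann--Roch cannot close this: formula (\ref{eq:riemannroch}) expresses the $d_i$ in terms of $a_1,a_2,b_1^j,b_2^j$ only with \emph{rational} coefficients (already $d_2$ in (\ref{eq:d2}) carries a factor of $\tfrac{1}{2}$), so the higher $d_i$ do not lie in the integral subring generated by your finite list, and there is no single degree-two class from which they can all be recovered integrally. If your accounting were correct, every $d_i$ with $i\geqslant 3$ would be redundant over $\Z$ (indeed over $\Z_2$), contradicting the paper's computations: for instance, in genus $3$ the restriction $\delta_4$ of $d_4$ is not a polynomial in $\alpha,\delta_2,\psi_j$ mod $2$, as one reads off from Table \ref{table:mod2}. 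The correct first step is that $H^\ast(B\overline{\mathscr{G}};\Z)$ is generated by $a_1,a_2,b_1^j,b_2^j$ together with the whole family of classes coming from $H^\ast(\Omega U(2);\Z)$, which after the Riemann--Roch change of variables become the $d_i$ in the stated range.
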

\vspace{.55cm}

\noindent Since $d_{1}$ is an integral multiple of $a_1$, it is in fact redundant. We can also show that the generator $a_2$ is redundant for certain coefficient rings, as follows.

In principle, all the classes $d_{i}$ can be computed from the Riemann-Roch formula (\ref{eq:riemannroch}) as rational expressions in the generators $a_1,a_2,b_1^j,b_2^j$. We will, essentially, accomplish this later using a computational framework set up by Zagier. As a basic example, however, we consider the computation of $d_{2}$. First, we remind the reader of the first few terms of the Chern character:

\vspace{.18cm}
\[
	\text{ch} \; = \; \text{rk} +c_1 + \left(\frac{1}{2}c_1^2 - c_2 \right) + \left(\frac{1}{6}c_1^3 - \frac{1}{2}c_1c_2 + \frac{1}{2}c_3 \right) + \ldots
\]
\vspace{.1cm}

\noindent Here `$\text{ch}$' is the Chern character of any complex vector bundle, `$\text{rk}$' is the rank, and $c_i$ stands for the $i^\text{th}$ Chern class. To begin using Riemann-Roch on our universal bundle $V_g$, we first note that the powers of $c_1(V_g)$ and $c_2(V_g)$ are straightforward to compute:

\vspace{.28cm}
\[
	c_1(V_g)^n \; = \; a_1^n\otimes 1 + n\sum_{j=1}^{2g} a_1^{n-1} b_1^j \otimes f_j + \left( n(4g-3)a_1^{n-1} - n(n-1) a_1^{n-2} B_1 \right)\otimes \omega
\]

\[
	c_2(V_g)^n \; = \; a_2^n\otimes 1 + n\sum_{j=1}^{2g} a_2^{n-1} b_2^j \otimes f_j + \left( n(2g-1)a_1a_2^{n-1} - nB_1a_2^{n-1} + n(n-1) a_2^{n-2} B_2 \right)\otimes \omega
\]
\vspace{.25cm}

\noindent Here we have set $B_{i} = \sum_{j=1}^g b_i^jb_i^{j+g}$ for $i=1,2$. As the bundle $V_g$ has rank two, we know that all Chern classes $c_i(V_g)$ for $i\geqslant 3$ are zero. It is then a routine matter to write out the first few terms of $\text{ch}(V_g) - (g-1)\text{ch}(V_g)\cdot 1\otimes \omega$, and then apply $f_\ast$, which simply picks out the terms in this expression that factor out an $\omega$. Setting this equal to $\text{ch}(f_!V_g) = \text{rk}(f_!V_g) + d_{1} + \frac{1}{2}d_{1}^2 - d_{2} + \ldots$, as (\ref{eq:riemannroch}) dictates, yields the equalities $\text{rk}(f_!V_g) =2g-1$ and $d_{1} = (g-1)a_1$, which were mentioned above, and also

\vspace{.28cm}
\begin{equation}
    d_{2} \; = \;  \frac{1}{2}\left((g-1)(g-2) a_1^2 + (2g-1)a_2 + a_1B_1 - B_{12}\right)\label{eq:d2}
\end{equation}
\vspace{.25cm}

\noindent where $B_{12} = \sum_{j=1}^g b_1^j b_2^{j+g} - b_1^{j+g}b_2^j$. From this equation we see that $a_2$, multiplied by the number $2g-1$, is equal to an integral expression in the generators $a_1,b_1^j,b_2^j$ and $d_{2}$. Thus:

\vspace{.65cm}
\begin{corollary}\label{prop:powertwo}
    If $m$ and $2g-1$ are coprime, then the residue classes of the elements $a_1,b_1^j,b_2^j,d_{i}$ generate the ring $H^\ast(M_g;\Z_m)$, where the indices run over $1\leqslant j \leqslant 2g$ and $2\leqslant i \leqslant 2g-1$.
\end{corollary}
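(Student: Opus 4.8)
The plan is to show that once $m$ is coprime to $2g-1$, the generator $a_2$ can be removed from the list in Theorem~\ref{thm:ab}, leaving the reduced list $a_1, b_1^j, b_2^j, d_i$ as generators of $H^\ast(M_g;\Z_m)$. The key observation is already contained in equation~(\ref{eq:d2}): solving that formula for $(2g-1)a_2$ gives
\[
    (2g-1)a_2 \; = \; 2d_2 - (g-1)(g-2)a_1^2 - a_1 B_1 + B_{12},
\]
where $B_1 = \sum_j b_1^j b_1^{j+g}$ and $B_{12} = \sum_j (b_1^j b_2^{j+g} - b_1^{j+g}b_2^j)$ are explicit \emph{integral} polynomials in $a_1, b_1^j, b_2^j$. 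Hence $(2g-1)a_2$ lies in the subring generated by $a_1, b_1^j, b_2^j, d_2$.

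First I would reduce this identity mod $m$. Since $\gcd(m, 2g-1)=1$, the integer $2g-1$ is a unit in $\Z_m$; let $r \in \Z$ satisfy $r(2g-1) \equiv 1 \pmod m$. Multiplying the displayed relation by $r$ shows that, in $H^\ast(M_g;\Z_m)$,
\[
    a_2 \; \equiv \; r\bigl(2d_2 - (g-1)(g-2)a_1^2 - a_1 B_1 + B_{12}\bigr) \pmod m,
\]
so the reduction of $a_2$ is a polynomial (with $\Z_m$-coefficients) in the reductions of $a_1, b_1^j, b_2^j, d_2$.

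Next I would invoke Theorem~\ref{thm:ab}. By naturality of reduction of coefficients, the reductions of $a_1, a_2, b_1^j, b_2^j, d_i$ generate $H^\ast(M_g;\Z_m)$ as a ring. (Here I use that $H^\ast(M_g;\Z)$ is torsion-free, by Atiyah--Bott, so there are no subtleties; even without that, surjectivity of the ring-generation statement passes through any coefficient ring map.) Substituting the previous display for $a_2$ in any polynomial expression in these generators, we see that every class in $H^\ast(M_g;\Z_m)$ is already a polynomial in the reductions of $a_1, b_1^j, b_2^j, d_i$ alone, with $2 \leqslant i \leqslant 2g-1$ and $1 \leqslant j \leqslant 2g$. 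This is exactly the assertion of the corollary.

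There is no real obstacle here; the only point requiring a line of care is the passage from the integral identity~(\ref{eq:d2}) to the mod-$m$ statement, i.e.\ confirming that $2g-1$ is invertible in $\Z_m$ precisely under the coprimality hypothesis, and that the remaining coefficients $2$, $(g-1)(g-2)$, and the entries of $B_1, B_{12}$ are honest integers so that the relation makes sense over $\Z$ before reduction. Both are immediate from the derivation of~(\ref{eq:d2}) via Grothendieck--Riemann--Roch given above. (One may remark in passing that $d_1 = (g-1)a_1$ remains redundant over $\Z_m$ as well, so the reduced list could be trimmed to $a_1, b_1^j, b_2^j$ together with $d_i$ for $2 \leqslant i \leqslant 2g-1$, which is what is stated.)
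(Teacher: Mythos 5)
Your proposal is correct and follows essentially the same route as the paper: the paper likewise reads off from equation~(\ref{eq:d2}) that $(2g-1)a_2$ is an integral polynomial in $a_1,b_1^j,b_2^j,d_2$, inverts $2g-1$ modulo $m$, and appeals to Theorem~\ref{thm:ab} together with torsion-freeness of $H^\ast(M_g;\Z)$ to reduce the integral generating set mod $m$. The only caveat is your parenthetical claim that generation would pass to an arbitrary coefficient ring even without torsion-freeness --- that is false in general (torsion contributes classes via the universal coefficient theorem that need not be reductions of integral classes) --- but your argument does not rely on it.
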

\vspace{.55cm}

\noindent Finally, we take a moment to mention an elementary but important point. Recall that the cohomology ring of any space is a graded commutative ring. This means that $ab= (-1)^{|a||b|}ba$ for any two homogeneously graded elements $a,b$ in the ring, where $|a|$ denotes the grading of $a$. When we take the tensor product of two such rings, the product is the graded commutative product, given by

\vspace{.18cm}
\[
    (a\otimes b)\cdot (c\otimes d) \; = \;  (-1)^{|b||c|}ac\otimes bd.
\]
\vspace{.1cm}

\noindent This is relevant in the above computations, all done in the context of a K\"{u}nneth decomposition, and the reader should be aware of this for the computations below. When all elements involved are of even gradings, as is often the case, there is of course no difference between this product and the ordinary product induced by tensor product.

\vspace{.65cm}

\subsection{The redundancy of some generators over $\Z_p$}

Here we explain why some of the $d_i$ are redundant generators when working over the field $\Z/p$ for $p$ prime. We begin by recalling where Atiyah-Bott's generators for $H^\ast(M_g;\Z)$ come from.\\

Recall that $M_g$ may be identified with the moduli space of projectively flat connections on a $U(2)$-bundle $P$ over $\Sigma_g$ with odd first Chern class. Let $\mathscr{G}$ be the gauge group consisting of bundle automorphisms of $P$, and write $\overline{\mathscr{G}}$ for the quotient of $\mathscr{G}$ by its constant central $U(1)$-subgroup. Further, write $\mathscr{C}$ for the affine space of connections on $P$, and $\mathscr{C}_{ss}$ for stratum of projectively flat connections. From the holomorphic viewpoint, this is the semi-stable stratum. Atiyah-Bott show that there is an induced surjection in equivariant cohomology

\vspace{.3cm}

\[
    H^\ast_{\overline{\mathscr{G}}}(\mathscr{C};\Z)\;\; \longrightarrow \;\; H^\ast_{\overline{\mathscr{G}}}(\mathscr{C}_{ss};\Z).
\]

\vspace{.3cm}

\noindent Indeed, they show that the Yang-Mills functional on $\mathscr{C}$ is equivariantly perfect Morse-Bott, and $\mathscr{C}_{ss}$ is the manifold of absolute minima. The domain of this map may be identified with the ordinary cohomology of the classifying space $B\overline{\mathscr{G}}$ and the codomain with that of $M_g$. They then obtain the generators for the cohomology of $M_g$ from generators for that of $B\overline{\mathscr{G}}$. The generators for the cohomology of $B\overline{\mathscr{G}}$ are obtained via the homological triviality of the following three fibrations:

\vspace{.3cm}

\begin{center}
    \begin{tikzcd}
    \Omega U(2)  \arrow[r] &  B\mathscr{G}^\# \arrow[d] & & B\mathscr{G}^\# \arrow[r] & B\mathscr{G} \arrow[d] &  & BU(1) \arrow[r] & B\mathscr{G} \arrow[d]\\
        & U(2)^{2g} & & & BU(2) && & B\overline{\mathscr{G}} 
    \end{tikzcd}
\end{center}

\vspace{.3cm}

\noindent We have written $\mathscr{G}^\#$ for the based gauge group and $\Omega U(2)$ for the identity component of the based loop space of $U(2)$. The classes $a_1$ and $a_2$ come from generators for the cohomology of $BU(2)$, while $b_1^j$ and $b_2^j$ correspond to generators of the $j^\text{th}$ factor of $U(2)$ inside the product $U(2)^{2g}$. The generators $d_i$ are replaceable by generators $e_i$ which can be traced back to generators for the cohomology of $\Omega U(2)$. See Proposition 2.20 of \cite{ab} for more details.\\

Recalling that the loop space of a circle is homotopy equivalent to a countable set of points, and that $U(2)$ is topologically a circle times a 3-sphere, we conclude that the based loop space of $U(2)$ is homotopy equivalent to $\Z\times \Omega S^3$, and in particular $\Omega U(2)$ may be identified with the loop space of the 3-sphere. Now, the cohomology ring of $\Omega S^3$ is well-known to be isomorphic to a {\emph{divided polynomial algebra}}. Recall \cite{hatcher} that the divided polynomial algebra $\Gamma_\Z[x]$ at level $n$, for some even integer $n=\text{deg}(x_1)$, is a ring with generators $x_i$ for $i\geqslant 1$ with deg$(x_i)=ni$ such that $x_1^k = k! x_k$. Consequently, $x_i x_j = {i+j \choose i}x_{i+j}$. Note that as a rational algebra, $\Gamma_\mathbb{Z}[x]\otimes \Q$ is generated by $x_1$. The cohomology ring of $\Omega S^3$ is isomorphic to $\Gamma_\Z[x]$ with $\text{deg}(x_1)=2$.\\

For a prime number $p$, the divided polynomial algebra $\Gamma_{\Z_p}[x]$ over the field $\Z_p$ does not need nearly as many generators. In fact, see for example loc. cit., we have an isomorphism

\vspace{.3cm}

\[
    \Gamma_{\Z_p}[x] \; \cong \; \bigotimes_{i\geqslant 0} \Z_p[x_{p^i}]/(x_{p^i}^p).
\]

\vspace{.3cm}

\noindent Since the lifts of the generators $d_i$ in the cohomology of $B\overline{\mathscr{G}}$ as in \cite{ab} come from generators for the cohomology of $\Omega U(2)$ via the homological triviality of the above fibrations, over $\Z_p$ one only needs the mod $p$ residue classes of the generators $d_{p^i}$. Thus:

\vspace{.65cm}
\begin{corollary}\label{prop:powertwo}
    If $p$ is prime, then the residue classes of the elements $a_1,a_2,b_1^j,b_2^j,d_{p^i}$ generate the ring $H^\ast(M_g;\Z_p)$, where the indices run over $1\leqslant j \leqslant 2g$ and $2\leqslant p^i \leqslant 2g-1$.
\end{corollary}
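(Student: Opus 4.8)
The plan is to trace through exactly why Atiyah-Bott's generators $d_i$ arise, and identify the minimal subset needed after passing to $\Z_p$-coefficients. The key structural input is that, as recalled above, the $d_i$ are replaceable by classes $e_i$ coming (via homological triviality of the three displayed fibrations) from generators of $H^\ast(\Omega U(2);\Z)$, and $\Omega U(2) \simeq \Omega S^3$ has cohomology ring the divided polynomial algebra $\Gamma_\Z[x]$ with $\deg x_1 = 2$. So first I would recall the ring-level statement: in $\Gamma_\Z[x]$ the classes $x_i$ satisfy $x_i x_j = \binom{i+j}{i} x_{i+j}$, and over $\Z_p$ one has the isomorphism $\Gamma_{\Z_p}[x] \cong \bigotimes_{i\geqslant 0}\Z_p[x_{p^i}]/(x_{p^i}^p)$ (this is exactly Hatcher's computation, cited in the excerpt). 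The point of this isomorphism is that over $\Z_p$ the algebra $\Gamma_{\Z_p}[x]$ is generated as a ring by the single-index-power classes $x_{p^i}$, $i\geqslant 0$; equivalently $x_n$ for $n$ not a power of $p$ is a polynomial (with $\Z_p$-coefficients) in the $x_{p^j}$ with $p^j < n$. A short way to see this: write $n = \sum_k \epsilon_k p^k$ in base $p$, and use $x_n = (\text{product of binomial coefficients})^{-1}\prod_k (x_{p^k})^{\epsilon_k}$, where Lucas's theorem guarantees the relevant multinomial coefficient is a unit mod $p$ when the base-$p$ digits do not "carry."

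Next I would push this back up through the fibration sequences. Atiyah-Bott establish homological triviality (over $\Z$, hence over any $\Z_p$) of the fibrations $\Omega U(2) \to B\mathscr{G}^\# \to U(2)^{2g}$, $B\mathscr{G}^\# \to B\mathscr{G} \to BU(2)$, and $BU(1)\to B\mathscr{G}\to B\overline{\mathscr{G}}$; consequently $H^\ast(B\overline{\mathscr{G}};\Z_p)$ is generated by the images of $a_1, a_2$ (from $BU(2)$), the $b_1^j, b_2^j$ (from the $U(2)^{2g}$ factors), and lifts of the generators of $H^\ast(\Omega S^3;\Z_p)$. Under this correspondence the $d_i$ (equivalently $e_i$) are precisely the images of the $x_i$. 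By the previous paragraph, modulo $p$ the classes $x_n$ for general $n$ are ring-theoretic expressions in the $x_{p^i}$, so the same multiplicative relations hold among the $d_n$ modulo $p$ (the homological triviality means these relations transport faithfully). Hence in $H^\ast(B\overline{\mathscr{G}};\Z_p)$ the generators $d_n$ with $n$ not a power of $p$ are redundant: they lie in the subring generated by $a_1, a_2, b_1^j, b_2^j$ and the $d_{p^i}$.

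Finally I would invoke the Atiyah-Bott surjection $H^\ast_{\overline{\mathscr{G}}}(\mathscr{C};\Z_p) = H^\ast(B\overline{\mathscr{G}};\Z_p) \twoheadrightarrow H^\ast(M_g;\Z_p)$, which holds with $\Z_p$-coefficients by the same equivariantly-perfect Morse-Bott argument (the Morse indices are even, so perfection is coefficient-independent). Surjectivity of a ring map means a generating set of the source maps to a generating set of the target, so $H^\ast(M_g;\Z_p)$ is generated by the residue classes of $a_1, a_2, b_1^j, b_2^j$ ($1\leqslant j\leqslant 2g$) and $d_{p^i}$ for $p^i$ in the relevant range. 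The only constraint on the index range of the $d_i$ is $2\leqslant i \leqslant 2g-1$ coming from Theorem \ref{thm:ab} (recall $d_1 = (g-1)a_1$ is already redundant), so the surviving ones are exactly the $d_{p^i}$ with $2\leqslant p^i\leqslant 2g-1$, which completes the proof.

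I expect the main subtlety to be bookkeeping rather than any deep obstacle: one must be careful that "redundant in $H^\ast(\Omega S^3;\Z_p)$" genuinely transports to "redundant in $H^\ast(M_g;\Z_p)$" through the chain of homologically trivial fibrations and the final surjection, since a priori the $d_i$ could pick up correction terms involving $a_1, a_2, b^j$ along the way — but that is fine, as such corrections only place $d_n$ in the subring generated by the allowed classes, which is all that is claimed. A secondary point worth stating carefully is the base-$p$/Lucas argument showing $\Gamma_{\Z_p}[x]$ is generated by the $x_{p^i}$; since this is classical and cited to Hatcher, I would keep it brief.
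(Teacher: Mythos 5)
Your proposal is correct and follows essentially the same route as the paper: trace the $d_i$ back through the homologically trivial fibrations to generators of $H^\ast(\Omega U(2);\Z)\cong\Gamma_\Z[x]$, use the mod $p$ decomposition $\Gamma_{\Z_p}[x]\cong\bigotimes_{i\geqslant 0}\Z_p[x_{p^i}]/(x_{p^i}^p)$ to conclude only the $x_{p^i}$ are needed, and push this through the Atiyah--Bott surjection onto $H^\ast(M_g;\Z_p)$. The paper states this more tersely (citing Hatcher for the divided power algebra fact rather than spelling out the Lucas-theorem computation), but the argument is the same.
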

\vspace{.25cm}

\vspace{.25cm}

\subsection{Integral generators for the cohomology of $N_g$}\label{sec:ng}

We now proceed to the generators of the fixed determinant moduli space $N_g$. Using the K\"{u}nneth decomposition of $H^\ast(N_g\times \Sigma_g)$, we implicitly define

\vspace{.18cm}
\[
    \alpha \in H^2(N_g;\Z), \qquad  \psi_j\in H^3(N_g;\Z), \qquad \beta\in H^4(N_g;\Z),
\]
\vspace{.1cm}

\noindent in which $1\leqslant j \leqslant 2g$, by the following Chern class expression, with constants arranged to follow the standard conventions in the literature:

\vspace{.18cm}
\begin{equation}
    c_2\left(\text{End}(V_g)|_{N_g\times\Sigma_g}\right) \; = \;  -\beta\otimes 1  + 4\sum_{j=1}^{2g}\psi_j\otimes f_j+ 2\alpha\otimes\omega.\label{eq:c2end}
\end{equation}
\vspace{.1cm}

\noindent We will shortly relate these classes to the generators of $M_g$ mentioned thus far. For this we will use the embedding $\iota:N_g\longrightarrow M_g$. It will be useful for the sequel to consider how the intersection pairings for $N_g$ and $M_g$ are related, and for this we make use of a $4^g$-fold covering map

\vspace{.18cm}
\begin{equation}
    p:N_g\times J_g \xrightarrow{4^g:1} M_g,\label{eq:cov}
\end{equation}
\vspace{.1cm}

\noindent in which $J_g$ is the Jacobian torus of $\Sigma_g$. More precisely, $M_g$ is the quotient of $N_g\times J_g$ by a free $\Z_2^{2g}$-action. The Jacobian is the moduli space of flat $U(1)$ connection on $\Sigma_g$, and this covering is defined by tensoring the connection classes in $N_g$ and $J_g$.

The map $p$ induces an isomorphism in rational cohomology, as is shown in \cite[Sec. 9]{ab}. This may be deduced from the fact that the relevant $\Z_2^{2g}$-action on $H^\ast(N_g\times J_g;\Q)$ is trivial. We now write down the effect of $p^\ast$ on some of the generators that we have introduced thus far. Let $\theta_j\in H^1(J_g;\Z)$ be the generator corresponding to $f_j\in H^1(\Sigma_g;\Z)$. Then we have:

\vspace{.55cm}
\begin{prop}\label{prop:pmap}
    The homomorphism $p^\ast:H^\ast(M_g;\Z) \longrightarrow H^\ast(N_g;\Z)\otimes H^\ast(J_g;\Z)$ is given by:
    \[
        p^\ast(a_1) \; = \; \alpha\otimes 1 + 1\otimes 4\Theta, \qquad p^\ast(a_2) \; = \; \frac{1}{4}\left(p^\ast(a_1)^2-\beta\otimes 1\right)
    \]
    \[
        p^\ast(b_1^j) \; = \; 1\otimes 2\theta_j,\qquad\;\;\;\;\; p^\ast(b_2^j) \; = \; \psi_j\otimes 1 + p^\ast(a_1)\cdot \left(1\otimes \theta_j\right)
    \]
\vspace{.10cm}

\noindent in which $\Theta = \sum_{j=1}^{g} \theta_j\theta_{j+g}$. 
\end{prop}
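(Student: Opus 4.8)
The plan is to pull back the defining Chern class relations (\ref{eq:c1}), (\ref{eq:c2}), and (\ref{eq:c2end}) along the covering map $p$ of (\ref{eq:cov}) and match K\"{u}nneth components. The key input is that $p$ is defined by tensoring the $SU(2)$-connection on $\Sigma_g$ with the $U(1)$-connection parametrized by $J_g$: if $L_g \to J_g \times \Sigma_g$ is the Poincar\'{e} line bundle, normalized so that $c_1(L_g) = \sum_j \theta_j \otimes f_j + (\text{something})\otimes\omega$, then $p^\ast V_g$ differs from $W_g \otimes L_g$ (with $W_g$ a universal bundle over $N_g \times \Sigma_g$) only by the pullback of a line bundle from $M_g$, exactly the kind of ambiguity discussed in Section \ref{sec:mg}. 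Since $V_g$ was fixed by the Atiyah-Bott normalization, this determining line bundle is pinned down, but for computing the $b_1^j, b_2^j, a_1, a_2$ components it is cleanest to work directly with $c_1$ and $c_2$ of $\text{End}$, which are insensitive to line bundle twists.

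First I would compute $p^\ast$ on $a_1$. Applying $p^\ast$ to (\ref{eq:c1}) and using $c_1(p^\ast V_g) = c_1(W_g) + c_1(L_g) + (\text{pullback from } M_g)$, one reads off the $H^2(N_g)\otimes 1$ component as $\alpha$ plus a contribution from $\Theta = \sum\theta_j\theta_{j+g}$; the normalization forces the coefficient to be $4$, giving $p^\ast(a_1) = \alpha\otimes 1 + 1\otimes 4\Theta$. (The constant $4$ is exactly what makes the tail terms in (\ref{eq:c1}) consistent; this is where the Atiyah-Bott choice of $V_g$ enters.) For the $b_1^j$: the $H^1$-part of $c_1(p^\ast V_g)$ lives in $H^1(J_g)$, and since $W_g$ is pulled back from $N_g\times\Sigma_g$ and contributes nothing in $H^1(J_g)$, the $f_j$-coefficient is $2\theta_j$ — the factor $2$ coming from $\det$, i.e. $c_1$ of a rank-two bundle tensored by a line bundle picks up twice the line bundle class. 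Next, for $a_2$: rather than chase (\ref{eq:c2}) directly, I would use the general identity $4c_2(V) - c_1(V)^2 = -c_2(\text{End}(V))$ for a rank-two bundle $V$, apply $p^\ast$, and invoke (\ref{eq:c2end}): the $1\otimes(\cdot)$ and $\Theta$ components give $p^\ast(\beta) = \beta\otimes 1$ and hence $p^\ast(a_2) = \tfrac14(p^\ast(a_1)^2 - \beta\otimes 1)$ after rearranging. Finally $b_2^j$ comes from matching the $f_j$-components of (\ref{eq:c2}) under $p^\ast$: the universal bundle over $N_g$ contributes $4\psi_j\otimes f_j$ to $c_2(\text{End})$, equivalently $\psi_j$ to the relevant piece of $c_2(V_g)$, and the tensoring by $L_g$ mixes in a cross term $c_1(V_g)\cdot(1\otimes\theta_j)$, producing $p^\ast(b_2^j) = \psi_j\otimes 1 + p^\ast(a_1)(1\otimes\theta_j)$.

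The main obstacle will be bookkeeping the line-bundle twist precisely enough to verify the integer constants ($4$ in $p^\ast a_1$, $2$ in $p^\ast b_1^j$, and the absence of any extra $\Theta$ or $\theta_j$ tail in $p^\ast b_2^j$), since the Atiyah-Bott normalization of $V_g$ (involving $\det(U_g|_{M_g})^{\otimes g}\otimes\det(f_! U_g)$) interacts with the covering in a way that must be tracked through the K\"{u}nneth formula and the graded-commutative sign conventions emphasized at the end of Section \ref{sec:mg}. Once one fixes how $L_g$ is normalized over $J_g\times\Sigma_g$ and computes $c_1, c_2$ of $W_g\otimes L_g$ against the pullbacks of (\ref{eq:c1})--(\ref{eq:c2}), the rest is a routine comparison of K\"{u}nneth components degree by degree. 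A useful sanity check throughout is restriction to $N_g\times\{pt\}\hookrightarrow N_g\times J_g$, under which $p$ becomes $\iota: N_g\to M_g$ and the formulas must reduce to $\iota^\ast(a_1) = \alpha$, $\iota^\ast(b_1^j)=0$, $\iota^\ast(b_2^j)=\psi_j$, $\iota^\ast(a_2) = \tfrac14(\alpha^2-\beta)$, consistent with the generators listed in the introduction.
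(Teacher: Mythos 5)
Your proposal is correct in outline and assembles the same ingredients as the paper's (sketched) proof: the K\"{u}nneth decomposition, the tensor-product structure of $p^\ast V_g$, the identity $c_2(\text{End}(W))=4c_2(W)-c_1(W)^2$ together with (\ref{eq:c2end}), and the first Chern class of the Poincar\'{e} bundle to produce the factor $2$ in $p^\ast(b_1^j)$. The one real difference is organizational, and it dissolves what you identify as the main obstacle: the paper never tracks the Atiyah--Bott normalizing line bundle through the covering. Since $\text{End}(V_g)$ is insensitive to line-bundle twists and is acted on trivially by $J_g$, its pullback factors through $\iota$, so equating (\ref{eq:c2end}) with $4c_2-c_1^2$ of (\ref{eq:c1})--(\ref{eq:c2}) yields $p^\ast(a_1-B_1)=\alpha\otimes 1$, $p^\ast(b_2^j-a_1b_1^j/2)=\psi_j\otimes 1$, and $p^\ast(a_1^2-4a_2)=\beta\otimes 1$ all at once; the only remaining input is $p^\ast(b_1^j)=1\otimes 2\theta_j$, which is itself normalization-independent (twisting by $f^\ast\ell$ changes $c_1(V_g)$ only in its $H^2(M_g)\otimes H^0(\Sigma_g)$ component) and follows from $V_g|_{J_g\times\Sigma_g}\cong U_J^{\otimes 2}$ with $c_1(U_J)=\sum_j\theta_j\otimes f_j$. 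In particular, your stated justification for the coefficient $4$ in $p^\ast(a_1)$ --- that the normalization forces it via consistency of the tail terms of (\ref{eq:c1}) --- is not how that constant arises and would not pin it down; it comes from $p^\ast(a_1)=\alpha\otimes 1+p^\ast(B_1)$ and $p^\ast(B_1)=\sum_j(1\otimes 2\theta_j)(1\otimes 2\theta_{j+g})=1\otimes 4\Theta$. (A small slip along the same lines: for rank-two $W$ one has $c_1(W\otimes L)=c_1(W)+2c_1(L)$, so your displayed formula for $c_1(p^\ast V_g)$ is missing a factor of $2$ on $c_1(L_g)$, although you invoke the correct factor when computing $p^\ast(b_1^j)$.)
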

\vspace{.55cm}

\begin{proof}
The proof of this proposition is more or less implicit in Atiyah-Bott \cite[p. 585]{ab}; we briefly sketch the argument. We first recall the identity $c_2(\text{End}(W))=4c_2(W)-c_1(W)^2$ for any rank two bundle $W$. Letting $\iota:N_g\longrightarrow M_g$ denote the inclusion map, we then equate the terms of (\ref{eq:c2end}) with $\iota^\ast$ applied to the expression $4c_2(V_g)-c_1^2(V_g)$ formed by (\ref{eq:c1}) and (\ref{eq:c2}) to obtain:

\vspace{.3cm}
\begin{equation}\label{eq:iota}
    \iota^\ast\left(a_1-B_1\right) \; = \; \alpha, \qquad \iota^\ast\left(b_2^j -a_1b_1^j/2\right) \; = \; \psi_j,\qquad \iota^\ast\left(a_1^2 - 4a_2\right) \; = \; \beta.
\end{equation}
\vspace{.3cm}

\noindent Next, observe that the endomorphism bundle of $V_g$ is acted on trivially by $J_g$, and that the restriction of $p$ to $N_g$ is equal to $\iota$. These observations imply the equations obtained from (\ref{eq:iota}) by replacing each $\iota^\ast$ with $p^\ast$, and replacing $\alpha$ with $\alpha\otimes 1$, and similarly for $\psi_j$ and $\beta$. Otherwise said, the pullback of $V_g$ via the map $p$ factors through $\iota$. The relations of the resulting equations determine the proposition, except for the fact that $p^\ast(b_1^j)=1\otimes 2\theta_j$. This last point has only to do with how the 1-skeleton of $N_g\times J_g$ is mapped to $M_g$ under $p$, which, at least up to sign, is transparent from the covering structure: each loop upstairs double covers a loop downstairs. To be more precise, we note that $V_g$ pulls back and restricts over $J_g\times \Sigma_g$ to the bundle $U_{J}^{\otimes 2}$ in which $U_{J}$ is a universal bundle over $J_g\times \Sigma_g$. One can then compute that $c_1(U_J) = \sum \theta_j\otimes f_j$, see for example \cite[Lemma 2.23]{fl}, which in turn implies $p^\ast(b_1^j)=1\otimes 2\theta_j$.
\end{proof}

\vspace{.3cm}

\noindent Observe that this proposition completely determines the map $p^\ast$, since the generators under consideration rationally generate the cohomology ring of $M_g$. From the above computation, we gather that the homomorphism $\iota^\ast:H^\ast(M_g;\Z)\longrightarrow H^\ast(N_g;\Z)$ is determined as follows:

\vspace{.30cm}

\[
    \iota^\ast(a_1) \; = \; \alpha, \qquad \iota^\ast(a_2) \;=\; (\alpha^2-\beta)/4, \qquad \iota^\ast(b_1^j) \; = \; 0, \qquad \iota^\ast(b_2^j) \; = \; \psi_j.
\]

\vspace{.30cm}

\noindent Now, with an eye towards producing generators for the integral cohomology ring of $N_g$, we define the $\delta_i$ from the introduction to be the restrictions of the classes $d_i$ from $M_g$:

\vspace{.3cm}
\[
    \delta_ i \; = \; \delta_{g,i} \; := \;  \iota^\ast(d_i) \; = \; c_i\left(f_!V_g|_{N_g}\right).
\]
\vspace{.3cm}

\begin{prop}\label{prop:intng}
    The elements $\alpha,\,\frac{1}{4}(\alpha^2-\beta),\,\psi_j,\,\delta_i$ generate the ring $H^\ast(N_g;\Z)$, where the indices run over $1\leqslant j \leqslant 2g$ and $2\leqslant i \leqslant 2g-1$. 
\end{prop}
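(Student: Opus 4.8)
The plan is to deduce this from the Atiyah--Bott generation theorem for $M_g$ (Theorem \ref{thm:ab}) by showing that the restriction homomorphism $\iota^\ast\colon H^\ast(M_g;\Z)\to H^\ast(N_g;\Z)$ is surjective. Granting surjectivity the proposition is immediate: by Theorem \ref{thm:ab} the classes $a_1,a_2,b_1^j,b_2^j,d_i$ (with $1\leqslant j\leqslant 2g$ and $2\leqslant i\leqslant 2g-1$) generate $H^\ast(M_g;\Z)$, hence their $\iota^\ast$-images generate $H^\ast(N_g;\Z)$; and those images were computed above to be $\alpha$, $\tfrac14(\alpha^2-\beta)$, $0$, $\psi_j$ and $\delta_i$ respectively. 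Dropping the zero classes (and noting that $\alpha=\iota^\ast(a_1)$ is already on the list) leaves exactly the generating set in the statement. So the whole content of the proposition is the surjectivity of $\iota^\ast$.

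To organize that, I would first reduce to the prime $2$. Both $H^\ast(M_g;\Z)$ and $H^\ast(N_g;\Z)$ are torsion free by Atiyah--Bott, so $\operatorname{coker}(\iota^\ast)$ is a finitely generated abelian group, and it vanishes as soon as it vanishes after tensoring with $\Z[1/2]$ and with $\Z/2$. Over $\Z[1/2]$ I would use the $4^g$-fold covering $p\colon N_g\times J_g\to M_g$ of (\ref{eq:cov}): its degree is a unit there, so $p^\ast$ identifies $H^\ast(M_g;\Z[1/2])$ with the invariants of the deck group $\Z_2^{2g}$ acting on $H^\ast(N_g\times J_g;\Z[1/2])$, and that action is trivial (it is trivial rationally, as recalled in Section \ref{sec:backg}, hence trivial on the torsion free integral cohomology, hence over $\Z[1/2]$); thus $p^\ast$ is an isomorphism in $\Z[1/2]$-coefficients. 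Since $\iota$ is the composite $N_g\hookrightarrow N_g\times J_g\xrightarrow{\,p\,}M_g$ and the first map is split by the projection onto $N_g$, it follows that $\iota^\ast$ is surjective over $\Z[1/2]$, so $\operatorname{coker}(\iota^\ast)$ is a finite $2$-group; it remains to see $\iota^\ast$ is surjective with $\Z/2$-coefficients.

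For the mod $2$ statement the covering map carries no information, and this is the step requiring genuine gauge-theoretic input: here I would appeal to the equivariant Morse theory of Atiyah--Bott \cite{ab}. The mechanism producing the surjection $H^\ast(B\overline{\mathscr{G}};\Z)\twoheadrightarrow H^\ast(M_g;\Z)$ recalled in Section \ref{sec:backg} applies equally to the moduli problem defining $N_g$ (projectively flat connections on the underlying $SO(3)$-bundle), giving a surjection $H^\ast(B\overline{\mathscr{G}}_0;\Z)\twoheadrightarrow H^\ast(N_g;\Z)$ from the classifying space of the corresponding gauge group $\overline{\mathscr{G}}_0$. Via the inclusion $\overline{\mathscr{G}}_0\hookrightarrow\overline{\mathscr{G}}$ and the inclusion of connection spaces, this fits into a commuting square whose two vertical maps are the Morse-theoretic surjections and whose bottom edge is $\iota^\ast$; since the top edge $H^\ast(B\overline{\mathscr{G}};\Z)\to H^\ast(B\overline{\mathscr{G}}_0;\Z)$ is onto --- both sides being assembled, through the homologically trivial fibrations of Section \ref{sec:backg}, from the cohomology of $\Omega U(2)$, $BU(2)$ and $U(2)^{2g}$ and their $SU(2)/SO(3)$ counterparts, on which the restriction maps are visibly onto --- a diagram chase gives that $\iota^\ast$ is surjective, already integrally. (This subsumes the $\Z[1/2]$ reduction above; I keep the reduction because it isolates where the work lies.)

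The main obstacle is exactly this mod $2$ surjectivity of $\iota^\ast$. Rationally and away from $2$ the covering $p$ trivializes the comparison between $M_g$ and $N_g$, but at $p=2$ one must fall back on the equivariant topology of the gauge group, in line with the paper's theme that the mod $2$ cohomology of $N_g$ is genuinely more delicate than its rational cohomology.
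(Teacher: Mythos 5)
Your first step --- reducing the proposition to the surjectivity of $\iota^\ast\colon H^\ast(M_g;\Z)\to H^\ast(N_g;\Z)$ via Theorem \ref{thm:ab} and the computed images of the generators --- is exactly the reduction the paper makes. Where you diverge is in how surjectivity is proved. The paper's argument is short and avoids gauge theory entirely: it considers the determinant fibration $M_g\to J_g$ with fiber $N_g$ and observes that its Leray--Serre spectral sequence must collapse at $E_2$ (with trivial local coefficients), because the total rank of $H^\ast(M_g;\Z)$ already equals that of $H^\ast(N_g;\Z)\otimes H^\ast(J_g;\Z)$ and everything is torsion-free; collapse forces the edge map $H^\ast(M_g;\Z)\to H^\ast(N_g;\Z)$ to be onto. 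Your $\Z[1/2]$ reduction via the transfer for the $4^g$-fold covering is correct as far as it goes, but as you say it only isolates the problem at the prime $2$.

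The gap is in your mod $2$ (in fact integral) step. The assertion that $H^\ast(B\overline{\mathscr{G}};\Z)\to H^\ast(B\overline{\mathscr{G}}_0;\Z)$ is ``visibly onto'' because the building blocks restrict onto one another does not survive inspection at $p=2$, which is precisely where you need it. The fixed-determinant gauge group $\overline{\mathscr{G}}_0$ is the quotient of the determinant-one gauge group $\mathscr{G}_0$ by its constant central $\Z_2$, so the fibration $BU(1)\to B\mathscr{G}\to B\overline{\mathscr{G}}$ used by Atiyah--Bott (whose homological triviality forces $H^\ast(B\overline{\mathscr{G}};\Z)$ to be torsion-free) is replaced by a fibration with fiber $B\Z_2$; this threatens to introduce $2$-torsion into $H^\ast(B\overline{\mathscr{G}}_0;\Z)$ that has no counterpart upstairs, and the analogous homological triviality and equivariant perfection over $\Z$ for the fixed-determinant stratification are not free --- they are exactly the delicate integral points one would have to re-verify. (Relatedly, $N_g$ is not the quotient of the flat connections on the $SO(3)$-bundle by the \emph{full} $SO(3)$ gauge group --- that quotient is $N_g/\Z_2^{2g}$ --- so the identification of $\overline{\mathscr{G}}_0$ itself needs care.) So while your strategy is morally the one Atiyah--Bott carry out in their Section 9, as written the key surjection on classifying-space cohomology is asserted rather than proved, and the torsion issues it hides are genuine. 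The paper's rank-counting spectral sequence argument sidesteps all of this and is the cleaner route.
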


\vspace{.4cm}

\begin{proof}
    Since these classes are the images of the generators for $M_g$ under $\iota^\ast$, it suffices to show that $\iota^\ast$ is onto. For this we consider the map $M_g \longrightarrow J_g$ which sends a connection class to its determinant connection class. This is a fibration with fibers homeomorphic to $N_g$. The Leray-Serre spectral sequence for this fibration collapses at the $E_2$-page: any non-trivial differentials, or non-trivial local-coefficient systems, are ruled out by the fact that the cohomologies of $M_g$ and $N_g\times J_g$ are torsion-free and of the same rank. The collapsing at $E_2$ then implies that the restriction map from the cohomology of $M_g$ to that of $N_g$ is surjective.
\end{proof}

\vspace{.5cm}

\begin{corollary}\label{cor:nggens}
    If $p$ is prime, then the residue classes of the elements $\alpha,\,\frac{1}{4}(\alpha^2-\beta),\,\psi_j,\,\delta_{p^i}$ generate the ring $H^\ast(N_g;\Z_p)$, where $1\leqslant j \leqslant 2g$ and $2\leqslant p^i \leqslant 2g-1$. If $p\nmid 2g-1$, then $\frac{1}{4}(\alpha^2-\beta)$ is redundant.
\end{corollary}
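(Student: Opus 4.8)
\noindent The plan is to derive this by restricting the corresponding statements for $M_g$ along the embedding $\iota : N_g \hookrightarrow M_g$, exploiting the surjectivity of $\iota^\ast$ established in Proposition \ref{prop:intng}. Because $H^\ast(M_g;\Z)$ and $H^\ast(N_g;\Z)$ are torsion-free, the surjection $\iota^\ast : H^\ast(M_g;\Z) \to H^\ast(N_g;\Z)$ stays surjective after reducing coefficients mod $p$, for every prime $p$; hence the $\iota^\ast$-image of any generating set for $H^\ast(M_g;\Z_p)$ is a generating set for $H^\ast(N_g;\Z_p)$. For the first assertion I would start from Corollary \ref{prop:powertwo}, by which the residue classes of $a_1, a_2, b_1^j, b_2^j, d_{p^i}$ generate $H^\ast(M_g;\Z_p)$ for $1 \leq j \leq 2g$ and $2 \leq p^i \leq 2g-1$. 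Applying $\iota^\ast$ and invoking the identities $\iota^\ast(a_1) = \alpha$, $\iota^\ast(a_2) = \frac{1}{4}(\alpha^2-\beta)$, $\iota^\ast(b_1^j) = 0$, $\iota^\ast(b_2^j) = \psi_j$ recorded after Proposition \ref{prop:pmap}, together with $\iota^\ast(d_{p^i}) = \delta_{p^i}$ by the definition of $\delta_i$, we obtain at once that $\alpha, \frac{1}{4}(\alpha^2-\beta), \psi_j, \delta_{p^i}$ generate $H^\ast(N_g;\Z_p)$; since $\iota^\ast$ is a ring homomorphism, no further relations are needed.

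\noindent For the redundancy statement the key input is the relation (\ref{eq:d2}). Applying $\iota^\ast$ and using $\iota^\ast(b_1^j) = 0$, so that $\iota^\ast(B_1) = \iota^\ast(B_{12}) = 0$, and clearing the denominator, one gets the integral identity
\[
    (2g-1)\cdot \tfrac{1}{4}(\alpha^2-\beta) \; = \; 2\delta_2 - (g-1)(g-2)\,\alpha^2 \qquad\text{in } H^4(N_g;\Z).
\]
When $p=2$, the right-hand side lies in $2H^4(N_g;\Z)$ (note $(g-1)(g-2)$ is always even) while $2g-1$ is a unit mod $2$, so $\frac{1}{4}(\alpha^2-\beta)$ already reduces to $0$ in $H^\ast(N_g;\Z_2)$ and may simply be struck from the list — and $\delta_2=\delta_{2^1}$ is in any case one of our generators when $g\geq 2$. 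For general $p$ with $p \nmid 2g-1$, the coefficient $2g-1$ is invertible mod $p$, and the displayed identity expresses $\frac{1}{4}(\alpha^2-\beta)$ as a $\Z_p$-polynomial in $\alpha$ and $\delta_2$; to finish one wants $\delta_2$ itself to be available from $\alpha,\psi_j,\delta_{p^i}$. For odd $p$, where $\delta_2$ is not literally on the list, I would instead restrict along $\iota^\ast$ the earlier $M_g$ statement in which, since $\gcd(p,2g-1)=1$, the generator $a_2$ has been eliminated in favour of $a_1,b_1^j,b_2^j$ and the $d_i$ via (\ref{eq:d2}), combined with the divided–power reduction of the family $\{d_i\}$ to $\{d_{p^i}\}$ over $\Z_p$; this yields that $\alpha,\psi_j,\delta_{p^i}$ generate $H^\ast(N_g;\Z_p)$.

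\noindent The step I expect to be the main obstacle is exactly this last compatibility in the odd–$p$ case: one must check that eliminating $a_2$ (which brings in $d_2$) and replacing $\{d_i\}$ by the sub-family $\{d_{p^i}\}$ can be carried out simultaneously without $a_2$ — equivalently $\frac{1}{4}(\alpha^2-\beta)$ — reappearing. I expect this to be routine by arguing one cohomological degree at a time and tracking the degree-$4$ part of $H^\ast(M_g;\Z_p)$, where both $a_2$ and $d_2$ live; the remainder of the argument is a formal consequence of the surjectivity of $\iota^\ast$.
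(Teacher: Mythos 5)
Your handling of the first assertion, and of the redundancy of $\tfrac{1}{4}(\alpha^2-\beta)$ when $p=2$, is correct and is exactly the route the paper intends: Proposition \ref{prop:intng} gives surjectivity of $\iota^\ast$ integrally, torsion-freeness of both cohomology rings transfers this to $\Z_p$ coefficients (strictly speaking, right-exactness of $-\otimes\Z_p$ gives the surjection and torsion-freeness is what identifies $H^\ast(-;\Z)\otimes\Z_p$ with $H^\ast(-;\Z_p)$), and one then pushes forward the mod $p$ generating set of $H^\ast(M_g;\Z_p)$ using $\iota^\ast(a_1)=\alpha$, $\iota^\ast(a_2)=\tfrac{1}{4}(\alpha^2-\beta)$, $\iota^\ast(b_1^j)=0$, $\iota^\ast(b_2^j)=\psi_j$, $\iota^\ast(d_i)=\delta_i$. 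Your observation that the restriction of (\ref{eq:d2}) forces $\tfrac{1}{4}(\alpha^2-\beta)\equiv 0$ mod $2$ is also correct, and $\delta_2=\delta_{2^1}$ is on the list in that case.

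The step you defer as routine is, however, a genuine gap: for odd $p$ it cannot be carried out, and the degree-by-degree check you propose fails already in degree $4$. Eliminating $a_2$ via (\ref{eq:d2}) consumes $d_2$, and $d_2$ is precisely one of the classes discarded in passing from $\{d_i\}$ to $\{d_{p^i}\}$, so the two reductions are incompatible. Concretely, for odd $p$ the generators $\psi_j$ and $\delta_{p^i}$ have degrees $3$ and $2p^i\geqslant 6$, so the only degree-$4$ product of elements of the reduced list $\alpha,\psi_j,\delta_{p^i}$ is $\alpha^2$; the degree-$4$ part of the subring they generate is therefore at most one-dimensional. On the other hand $\dim H^4(N_g;\Z_p)=2$ for $g\geqslant 3$, and since (by the first assertion) $\alpha^2$ and $\tfrac{1}{4}(\alpha^2-\beta)$ must span $H^4(N_g;\Z_p)$, they are linearly independent mod $p$. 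Hence for $g\geqslant 3$ and odd $p\nmid 2g-1$ (e.g.\ $g=3$, $p=3$, $2g-1=5$) the class $\tfrac{1}{4}(\alpha^2-\beta)$ is \emph{not} in the subring generated by $\alpha,\psi_j,\delta_{p^i}$. You should therefore prove the redundancy only for $p=2$ (where $2\nmid 2g-1$ holds automatically and $\delta_2$ is available), which is the only case used later in the paper; for odd $p$ one must either keep $\delta_2$ among the generators or keep $\tfrac{1}{4}(\alpha^2-\beta)$.
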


\vspace{.65cm}

\subsection{Twisting by a line bundle to define $z_i$ and $\xi_i$}

We now describe how the generators $d_i$ and $\delta_i$ can be replaced with generators obtained from twisting by a line bundle, and then define the classes $z_i$ and $\xi_i$.\\

The elements $d_{i}$ were defined as the Chern classes $c_i(f_!V_g)$ in which the universal bundle $V_g$ is normalized such that $c_1(f_!V_g) = (g-1)c_1(V_g|_{M_g})$, i.e. $d_{1}=(g-1)a_1$. However, Theorem \ref{thm:ab} as stated by Atiyah-Bott holds for any normalization of $V_g$. In particular, if we consider the universal bundle which is $V_g$ twisted by a power of the pull-back of $\det(V_g|_{M_g})$, i.e. the bundle

\vspace{.3cm}
\begin{equation}
    V_g\otimes  f^\ast \det\left(V_g|_{M_g}\right) ^{\otimes n}\label{eq:bundletwist}
\end{equation}
\vspace{.3cm}

\noindent then the Chern classes of its direct image will still, along with $a_1,a_2,b_1^j,b_2^j$, generate the integral cohomology ring of $M_g$.  When we consider the direct image of (\ref{eq:bundletwist}) under the projection $f$, it is useful to mention that in general $f_! \left(W \otimes f^\ast L\right)$ is isomorphic to $f_!W \otimes L$. Here we recall the Chern class formula for tensoring a vector bundle $W$ of rank $r$ by a line bundle $L$:

\vspace{.3cm}
\begin{equation}
   c_i(W\otimes L) \; = \; \sum_{j=0}^{i} {r-j\choose i-j}c_1(L)^{i-j}c_j(W).\label{eq:linebundle}
\end{equation}
\vspace{.3cm}

\noindent This tells us how the generators $d_i$ transform after twisting by a line bundle: upon setting $W=f_!V_g$ and $L=\det(V_g|_{M_g})$, the above formula has $r=2g-1$, $c_j(W)=d_j$ and $c_1(L) = (2n-1)a_1$. Although setting $n=-1/2$ does {\emph{not}} transform the $d_i$ to {\emph{integral}} generators, it is a case of particular computational interest to us, and so we define the transformed generators:

\vspace{.3cm}
\begin{equation*}
   z_i \; := \; c_i\left(f_!V_g\otimes \text{det}\left(V_g|_{M_g}\right)^{-1/2}\right), \qquad \xi_i \; := \; c_i\left(f_!V_g\otimes \text{det}\left(V_g|_{M_g}\right)^{-1/2}\right).
\end{equation*}
\vspace{.3cm}

\noindent Of course, the bundles here are not actual vector bundles, but $z_i$ and $\xi_i$ may be defined in terms of $d_i$ and $\delta_i$ using (\ref{eq:linebundle}). Alternatively, one may think of the bundles that appear in the setting of rational $K$-theory. The class $2^i z_i$ (resp. $2^i \xi_i$) is in the integral cohomology of $M_g$ (resp. $N_g$). The formula (\ref{eq:linebundle}) relating $\delta_i$ with $\xi_i$ is what appears in the introduction as (\ref{eq:defxi}), and a similar formula holds replacing $\xi_i$ with $z_i$, $\delta_i$ with $d_i$, and $\alpha$ with $a_1$. Note $\xi_1 = -\alpha/2$ and $z_1 = -a_1/2$. For the same reason as was for $d_i$ and $\delta_i$, the classes $z_i$ and $\xi_i$ may sometimes be written $z_{g,i}$ and $\xi_{g,i}$.\\

Finally, we mention that if we are working over the coefficient ring $\Z_m$ with $m$ odd, then $\xi_i$ may be defined as an honest class in $H^{2i}(N_g;\Z_m)$ by interpreting $1/2$ in the above formulas as the mod $m$ inverse of $2$. A similar remark holds for the classes $z_i$. We then have:

\vspace{.5cm}

\begin{prop}\label{prop:xigens}
    If $m$ is odd, to generate the ring $H^\ast(N_g;\Z_m)$, we may replace the $\delta_i$ generators by the $\xi_i$ classes as interpreted above. Similarly, to generate $H^\ast(M_g;\Z_m)$ we may replace $d_i$ by $z_i$.
\end{prop}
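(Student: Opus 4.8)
The plan is to deduce this from the integral generation statements already established, namely Proposition~\ref{prop:intng} for $N_g$ and Theorem~\ref{thm:ab} together with Corollary~\ref{prop:powertwo} for $M_g$, by showing that, over $\Z_m$ with $m$ odd, the subring generated by $\{\alpha,\psi_j,\delta_i\}$ coincides with the subring generated by $\{\alpha,\psi_j,\xi_i\}$ (and similarly for $M_g$ with $d_i$ versus $z_i$). The point is purely algebraic: the change of generators is invertible over $\Z_m$.

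First I would record that, by definition, each $\xi_i$ is given in terms of the $\delta_j$ with $j \leqslant i$ and powers of $\alpha$ by the left-hand formula of (\ref{eq:defxi}), with the only denominators being powers of $2$; since $m$ is odd, $2$ is invertible mod $m$, so $\xi_i$ is a well-defined element of the subring of $H^\ast(N_g;\Z_m)$ generated by $\alpha,\psi_j,\delta_i$. Conversely, the right-hand formula of (\ref{eq:defxi}) expresses each $\delta_i$ in terms of $\xi_j$ with $j\leqslant i$ and powers of $\alpha/2$, again with only powers of $2$ in the denominators, so each $\delta_i$ lies in the subring generated by $\alpha,\psi_j,\xi_i$. (One should note that (\ref{eq:defxi}) really is an inverse pair: this is the standard fact that the transformation in (\ref{eq:linebundle}) for twisting by a line bundle, with $c_1(L)$ fixed, is unipotent lower-triangular in the Chern classes, hence invertible, and plugging $n=1/2$ versus $n=-1/2$ gives the two displayed formulas; this is already asserted in the text following (\ref{eq:defxi}) and in Section~\ref{sec:ng}.) Combining the two inclusions, the two subrings are equal, and since by Proposition~\ref{prop:intng} the first is all of $H^\ast(N_g;\Z_m)$ (the integral generators reduce mod $m$ to generators), so is the second. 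The argument for $M_g$ is identical, replacing $\delta_i,\xi_i,\alpha,N_g$ by $d_i,z_i,a_1,M_g$, and invoking Theorem~\ref{thm:ab} in place of Proposition~\ref{prop:intng}; here one uses $z_1=-a_1/2$, $\xi_1=-\alpha/2$ to see that $\alpha$ (resp.\ $a_1$) is itself recoverable from the $\xi_i$ (resp.\ $z_i$), though it is already on the generator list.

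There is essentially no obstacle here; the only thing requiring a word of care is the well-definedness of $\xi_i$ and $z_i$ as honest mod $m$ classes, which is exactly why the hypothesis $m$ odd is needed — over $\Z_m$ with $m$ odd the element $1/2$ makes sense as the inverse of $2$, and the formulas (\ref{eq:defxi}) and (\ref{eq:linebundle}) then define $\xi_i,z_i$ unambiguously without reference to any fractional class. If anything deserves to be called the main point, it is simply the observation that the triangular base change has unit diagonal entries $\binom{r-i}{0}=1$ and so is invertible over any ring, in particular over $\Z_m$.
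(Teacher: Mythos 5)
Your proof is correct and takes essentially the same route as the paper, which states this proposition with no separate proof beyond the preceding remark that for $m$ odd the fractions $1/2$ in (\ref{eq:defxi}) and (\ref{eq:linebundle}) make sense mod $m$; the substance in both cases is exactly your observation that the base change between the $\delta_i$ and the $\xi_i$ (resp.\ $d_i$ and $z_i$) is triangular with unit diagonal and only powers of $2$ in the denominators, hence invertible over $\Z_m$.
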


\vspace{.65cm}

\subsection{Generators for the invariant subring of $N_g$}

The mapping class group of $\Sigma_g$ acts on the moduli space $N_g$ in a natural way. The subgroup of the mapping class group that acts trivially on the homology of $\Sigma_g$, called the Torelli group, acts trivially on $H^\ast(N_g;\Z)$. Thus the action of the mapping class group on $H^\ast(N_g;\Z)$ descends to an action of the quotient group, which is $\text{Sp}(H^1(\Sigma_g;\Z))$. Having previously chosen a symplectic basis for the first cohomology group of $\Sigma_g$, we may identify this as an action of $\text{Sp}(2g,\Z)$.\\

The classes $\alpha$ and $\beta$ are invariant under this action, while the classes $\psi_j$ behave under the action as does a standard symplectic basis. It is convention to define the degree 6 element

\vspace{.3cm}

\[
    \gamma \; := \; -2 \sum_{j=1}^{g} \gamma_j \;\; \in H^6(N_g;\Z),\qquad \gamma_j \; := \; \psi_j \psi_{j+g},
\]

\vspace{.3cm}

\noindent for then $\alpha,\beta,\gamma$ generate the invariant ring over the rationals. This is a basic exercise in $\text{Sp}(2g,\Z)$-representation theory: the free graded-commutative algebra generated by the $\psi_j$ has its invariant ring over the rationals generated by $\gamma$. Over the integers, however, the invariant ring is a divided polynomial algebra $\Gamma_\Z[\upsilon]$, in which we define $\upsilon_k$ for $k\geqslant 1$ as follows:

\vspace{.3cm}

\[
    \upsilon_k \; := \; \sum_{i_1 <  \ldots < i_k} \gamma_{i_1}\cdots\gamma_{i_k} \; = \; \gamma^k/2^k k!
\]

\vspace{.3cm}

\noindent We learned earlier that when working over the field $\Z_p$ for $p$ prime, one only needs the generators $\upsilon_{p^i}$. Note that $\upsilon_k=0$ for $k\geqslant g$ for degree reasons. The classes $\delta_i$ as well as $\xi_i$ are invariant under the action for the same reasons as are $\alpha$ and $\beta$; alternatively, we will later see explicit expressions for these classes as rational polynomials in $\alpha,\beta,\gamma$. We conclude:

\vspace{.4cm}

\begin{prop}\label{prop:inv}
${}$\\
\vspace{-.1cm}
\begin{enumerate}
    \item[(i)] The $\text{{\emph{Sp}}}(2g,\Z)$-invariant subring of $H^\ast(N_g;\Z)$ is generated by $\alpha,\,\frac{1}{4}(\alpha^2-\beta),\,\delta_i,\,\upsilon_k$ where the indices run over $2\leqslant i \leqslant 2g-1$ and $1\leqslant k < g$. 
    \item[(ii)] For $p$ prime, the $\text{{\emph{Sp}}}(2g,\Z)$-invariant subring of $H^\ast(N_g;\Z_p)$ is generated by $\alpha,\,\frac{1}{4}(\alpha^2-\beta),\,\delta_{p^i},\,\upsilon_{p^k}$ where the indices run over $2\leqslant p^i \leqslant 2g-1$ and $1\leqslant p^k < g$. 
\end{enumerate}
\end{prop}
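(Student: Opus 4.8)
The plan is to deduce both parts from the generating sets of Section~\ref{sec:ng} together with the rational invariant theory recalled above, controlling integrality by means of torsion-freeness of $H^*(N_g;\Z)$. First some elementary reductions. As $H^*(N_g;\Z)$ is torsion-free and each $\psi_j$ has odd degree, graded commutativity gives $\psi_j^2=0$, hence $\gamma_j^2=-\psi_j^2\psi_{j+g}^2=0$; since the $\gamma_j$ commute and square to zero, $(\sum_j\gamma_j)^k=k!\sum_{i_1<\cdots<i_k}\gamma_{i_1}\cdots\gamma_{i_k}$, which gives the stated identity $\upsilon_k=\pm\gamma^k/(2^kk!)$, shows each $\upsilon_k$ is an honest integral class, and forces $\upsilon_k=0$ for $k\ge g$ on degree grounds. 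Let $R'$ denote the subring of $H^*(N_g;\Z)$ generated by $\alpha$, $\tfrac14(\alpha^2-\beta)$, the $\delta_i$ with $2\le i\le 2g-1$, and the $\upsilon_k$ with $1\le k\le g-1$; all these generators are invariant --- $\alpha$ and $\beta$, hence $\tfrac14(\alpha^2-\beta)$, and the $\delta_i$ as noted above, and the $\upsilon_k$ as symmetric functions of the $\gamma_j$ --- so $R'\subseteq H^*(N_g;\Z)^{\mathrm{Sp}}$ and the content is the reverse inclusion.

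Rationally the reverse inclusion is immediate: $H^*(N_g;\Q)^{\mathrm{Sp}}$ is generated by $\alpha,\beta,\gamma$ (the basic exercise recalled above), $\beta=\alpha^2-4\cdot\tfrac14(\alpha^2-\beta)$ and $\gamma$ is an integral multiple of $\upsilon_1$, while conversely $\tfrac14(\alpha^2-\beta),\upsilon_k,\delta_i$ are rational polynomials in $\alpha,\beta,\gamma$ --- for the $\delta_i$ this uses the explicit expressions obtained in Section~\ref{sec:maincomp} --- so $R'\otimes\Q=H^*(N_g;\Q)^{\mathrm{Sp}}$. By torsion-freeness $H^*(N_g;\Z)^{\mathrm{Sp}}=H^*(N_g;\Z)\cap H^*(N_g;\Q)^{\mathrm{Sp}}=H^*(N_g;\Z)\cap(R'\otimes\Q)$, so part (i) is \emph{equivalent} to the assertion that $R'$ is saturated in $H^*(N_g;\Z)$: no element lying outside $R'$ has a nonzero integer multiple inside $R'$. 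This saturation is the crux. To prove it I would use the generators of Proposition~\ref{prop:intng}: because $\psi_j^2=0$, the ring $H^*(N_g;\Z)$ is spanned, as a module over the subring $R=\langle\alpha,\tfrac14(\alpha^2-\beta),\delta_i\rangle\subseteq R'$, by the squarefree monomials $\psi_S$, and $\mathrm{Sp}(2g,\Z)$ fixes $R$ and acts on $\langle\psi_1,\dots,\psi_{2g}\rangle$ as the standard symplectic module. The integral form of the basic exercise says the $\mathrm{Sp}(2g,\Z)$-invariants of the exterior algebra $\Lambda^{*}_\Z(\Z^{2g})$ form the divided power algebra $\Gamma_\Z[\omega]$ on the symplectic form $\omega=\sum_j e_j\wedge e_{j+g}$, with \emph{primitive} divided powers $\omega^{[k]}=\omega^k/k!$, which map under the evident surjection onto the $\pm\upsilon_k$. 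Filtering $H^*(N_g;\Z)$ by $\psi$-degree, the leading term of an invariant class is then forced to be an $R$-multiple of some $\omega^{[k]}$, i.e.\ of $\upsilon_k$; subtracting it and inducting downward gives saturation, provided one controls the ambiguity introduced by the relation ideal of $H^*(N_g;\Z)$ in $\psi$-degree. I expect precisely this last point --- ruling out hidden divisibility coming from the relations, which is where the explicit pairing formulas of Section~\ref{sec:maincomp} are genuinely needed --- to be the main obstacle.

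Part (ii) runs in parallel over $\Z_p$, with Corollary~\ref{cor:nggens} in place of Proposition~\ref{prop:intng} (so among the $\delta$'s only the $\delta_{p^i}$ with $2\le p^i\le 2g-1$ are needed) and the structural isomorphism $\Gamma_{\Z_p}[\omega]\cong\bigotimes_{i\ge 0}\Z_p[\omega_{p^i}]/(\omega_{p^i}^p)$, whose ring generators $\omega_{p^i}=\omega^{[p^i]}$ (with $p^i<g$) are the mod-$p$ reductions of the $\upsilon_{p^i}$. Note that the $\mathrm{Sp}(2g,\Z)$-action on $H^*(N_g;\Z_p)$ factors through the finite quotient $\mathrm{Sp}(2g,\Z_p)$. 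Alternatively one bootstraps from part (i): there is an injection $H^*(N_g;\Z)^{\mathrm{Sp}}\otimes\Z_p\hookrightarrow H^*(N_g;\Z_p)^{\mathrm{Sp}}$, and the remaining point --- that it is onto, i.e.\ that no class becomes invariant only after reduction mod $p$ --- is again a divisibility statement settled using the explicit formulas of Section~\ref{sec:maincomp}.
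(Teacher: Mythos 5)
Your reduction is the same one the paper makes, and everything you actually establish matches the paper's (quite brief) justification. The proposition carries no formal proof environment; the argument given in Section~2.5 consists precisely of (a) the observation that $\alpha$, $\beta$ (hence $\tfrac14(\alpha^2-\beta)$) and the $\delta_i$ are invariant while the $\psi_j$ carry the standard symplectic representation, (b) the ``basic exercise'' that the invariant subring of the free graded-commutative algebra on the $\psi_j$ is generated over $\Q$ by $\gamma$ and over $\Z$ is the divided polynomial algebra $\Gamma_\Z[\upsilon]$ on the classes $\upsilon_k$, and (c) for part (ii), the structure $\Gamma_{\Z_p}[x]\cong\bigotimes_{i\geqslant 0}\Z_p[x_{p^i}]/(x_{p^i}^p)$ together with Corollary~\ref{cor:nggens}, which is why only the $\delta_{p^i}$ and $\upsilon_{p^k}$ are retained. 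Your preliminary computations ($\psi_j^2=0$ from torsion-freeness, $\gamma_j^2=0$, $\upsilon_k=\pm\gamma^k/2^kk!$, the vanishing $\upsilon_k=0$ for $k\geqslant g$) and your identification of the rational invariant ring are all consistent with this. For (ii) the paper argues the ``parallel over $\Z_p$'' version you describe first, not the bootstrap through part (i); note that your bootstrap would additionally require surjectivity of $H^\ast(N_g;\Z)^{\mathrm{Sp}}\otimes\Z_p\to H^\ast(N_g;\Z_p)^{\mathrm{Sp}}$, which you correctly flag as unproved.

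The one place you go beyond the paper is in isolating the saturation question: the passage from ``the full ring is generated by invariant classes together with the $\psi_j$'' to ``the invariant subring is generated by those invariant classes together with the $\upsilon_k$'' requires knowing that the surjection from the free graded-commutative model onto $H^\ast(N_g;\Z)$ remains surjective on $\mathrm{Sp}(2g,\Z)$-invariants --- equivalently, by torsion-freeness, that your subring $R'$ is saturated. Over $\Q$ this is automatic from complete reducibility of the relevant algebraic representations, but integrally it is a genuine divisibility statement, and your filtration-by-$\psi$-degree strategy is left as a sketch with the key point (controlling the relation ideal of $H^\ast(N_g;\Z)$) explicitly unresolved. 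So as a self-contained proof your write-up is incomplete at exactly the step you flag. This is not, however, a divergence from the paper: the paper passes from the invariant theory of the free exterior algebra on the $\psi_j$ directly to the conclusion without commenting on this step, so your proposal proves everything the paper's own argument proves and is, if anything, a more candid account of what remains to be checked.
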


\vspace{.4cm}

\noindent As seen earlier, if in (ii) we have $p \nmid 2g-1$, then $(\alpha^2-\beta)/4$ is redundant. Further, if $p$ is odd, then the $\delta_i$ generators can be replaced by the $\xi_i$ just as in the previous subsection. A similar proposition may be crafted for the invariant subring of $M_g$, for which one has the classes $b_1^j$ and $b_2^j$ instead of just the $\psi_j$, but we will not pursue this.

\vspace{.65cm}

\vspace{.35cm}

\section{The intersection pairings for Newstead classes}\label{sec:intersection}

The computational framework of Zagier \cite{zagier} that we use to prove Theorem \ref{thm:maincomp} is derived from intersection pairing formulas of Thaddeus \cite{thaddeus-conformal} for monomials in $\alpha,\beta,\psi_j$. We will not work directly with these formulas, but will need some of their properties for later.\\

Recall that $\dim N_g = 6g-6$ and $\text{deg}(\alpha)=2$ and $\text{deg}(\beta)=4$. Thaddeus computes

\vspace{.18cm}
\begin{equation}\label{eq:alphabeta}
    \alpha^i\beta^j[N_g]  = (-1)^{g}\frac{i!}{(i-g+1)!}2^{2g-2}\left(2^{i-g+1}-2\right)B_{i-g+1}
\end{equation}
\vspace{.1cm}

\noindent whenever $i+2j=3g-3$, where $B_n$ is the $n^{\text{th}}$ Bernoulli number, and should not be confused with the elements $B_1$ and $B_2$ defined earlier. This formula is the most fundamental; the inclusion of the $\psi_j$ terms is handled with the following genus recursive property. For any subset $K\subset \{1,\ldots,g\}$ with cardinality $|K|=k$, and any $j\geqslant 0$ such that $i+3k+2j=3g-3$, we have

\vspace{.3cm}
\begin{equation}
    \alpha^i\beta^j\prod_{\ell\in K}\psi_\ell\psi_{\ell+g}[N_g]  = \pm \alpha^i\beta^j[N_{g-k}].\label{eq:psi}
\end{equation}
\vspace{.3cm}

\noindent The sign, which is $(-1)^{|K|}$, is not important for us. To prove (\ref{eq:psi}), Thaddeus shows that $\psi_j\psi_{j+g}$ is Poincar\'{e} dual to the homology class of an appropriately embedded $N_{g-1}$ inside $N_g$. Thaddeus further shows that if a subset $K\subset\{1,\ldots,2g\}$ is such that $K\neq K+g$, then any pairing $\alpha^i\beta^j\prod_{\ell \in K} \psi_\ell [N_g]$ vanishes. Here $K+g$ is the set of elements $k+g$ where $k\in K$, in which addition is understood mod $2g$.\\

We now derive some similar properties for the intersection pairings of the larger moduli space $M_g$. The pairings in $H^\ast(M_g;\Z)$ can be understood in terms of those in $N_g$ using the covering (\ref{eq:cov}) from $N_g\times J_g$ down to $M_g$: if $x\in H^{8g-6}(M_g;\Z)$ is a top degree element, then

\vspace{.3cm}
\begin{equation}
    x[M_g] \; = \; \frac{1}{4^{g}}\left(p^\ast(x)\, \slash \, \Omega_g \right)[N_g],\label{eq:cup}
\end{equation}
\vspace{.3cm}

\noindent where we have taken the slant-product with $\Omega_g \in H^{2g}(J_g;\Z)$, the orientation class of the Jacobian $J_g$. The factor of $1/4^{g}$ appears because $p$ is a $4^{g}$-fold covering. From Prop. \ref{prop:pmap} and (\ref{eq:alphabeta}) we compute

\vspace{.3cm}
\begin{equation}\label{eq:a1}
    a_1^{4g-3}[M_g]  = \frac{(4g-3)!}{(2g-2)!}2^{2g-2}\left(2^{2g-2}-2\right)|B_{2g-2}|
\end{equation}
\vspace{.3cm}

\noindent Indeed, this is the result of expanding $(\alpha\otimes 1+4\otimes \Theta)^{4g-3}/4^g$, taking the slant product with $\Omega_g$, which picks out the term in front of $\Theta^g/g!$, and evaluating against $[N_g]$. One can similarly use Prop. \ref{prop:pmap} and (\ref{eq:alphabeta}) to compute intersection pairings in $M_g$ for monomials in $a_1,a_2$. Next, we have the following analogue for $M_g$ of the vanishing property for the $\psi_j$ classes:

\vspace{.65cm}
\begin{prop}\label{prop:bterms}
    Suppose $J_1$ and $J_2$ are subsets of $\{1,\ldots,2g\}$, and that $x\in H^\ast(M_g;\Z)$ is an element invariant under the $\text{{\emph{Sp}}}(2g,\Z)$-action. Then with $I_i = J_i\cap(J_i+g)$ for $i=1,2$ we have
    
    \vspace{.25cm}
    \[
        J_1\setminus I_1 \neq (J_2\setminus I_2)+g \quad \implies \quad x\prod_{j\in J_1}b_1^j \prod_{j\in J_2}b_2^j [M_g] \; = \; 0.
    \]    
\end{prop}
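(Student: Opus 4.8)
The plan is to reduce this statement about pairings on $M_g$ to the analogous vanishing property on $N_g$ via the covering map $p\colon N_g\times J_g\to M_g$ and the pairing formula (\ref{eq:cup}). First I would apply Proposition \ref{prop:pmap} to express the pullbacks $p^\ast(b_1^j) = 1\otimes 2\theta_j$ and $p^\ast(b_2^j) = \psi_j\otimes 1 + p^\ast(a_1)\cdot(1\otimes\theta_j)$. Since $x$ is $\text{Sp}(2g,\Z)$-invariant, and $\alpha,\beta$ and all $\delta_i$ (equivalently $\xi_i$) are invariant, $p^\ast(x)$ can be written in terms of classes of the form $(\text{invariant on }N_g)\otimes(\text{polynomial in the }\theta_j\text{ on }J_g)$; in fact by Proposition \ref{prop:pmap} the only $J_g$-factors entering $p^\ast(x)$ are powers of $\Theta = \sum_j\theta_j\theta_{j+g}$, so they pair symmetrically in the indices $j$ and $j+g$.

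Next I would expand the product $\prod_{j\in J_1}p^\ast(b_1^j)\prod_{j\in J_2}p^\ast(b_2^j)$ in the K\"unneth decomposition of $H^\ast(N_g;\Z)\otimes H^\ast(J_g;\Z)$. Each $b_1^j$ contributes a $\theta_j$ to the $J_g$-factor and nothing to the $N_g$-factor; each $b_2^j$ contributes either $\psi_j$ to the $N_g$-factor (and $1$ to the $J_g$-factor) or $\theta_j$ to the $J_g$-factor (with $p^\ast(a_1)$ absorbed into the $N_g$-part, which remains invariant). After taking the slant product with $\Omega_g\in H^{2g}(J_g;\Z)$, only those terms survive in which the accumulated $J_g$-factor is (a nonzero multiple of) the top class $\theta_1\theta_2\cdots\theta_{2g}$; in particular each index in $\{1,\ldots,2g\}$ must be hit exactly once by a $\theta$. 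The key bookkeeping step is then to identify, for each surviving term, which $\psi_\ell$'s land on the $N_g$-factor: these are precisely the $\ell\in J_2$ whose corresponding $b_2^\ell$ chose the $\psi_\ell$-branch rather than the $\theta_\ell$-branch. Writing $K\subset\{1,\ldots,2g\}$ for this set of surviving $\psi$-indices on $N_g$, the resulting $N_g$-pairing has the shape $(\text{invariant class})\cdot\prod_{\ell\in K}\psi_\ell[N_g]$.

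Now I would invoke Thaddeus's vanishing results recalled in Section \ref{sec:intersection}: the pairing $\alpha^i\beta^j\prod_{\ell\in K}\psi_\ell[N_g]$ vanishes unless $K = K+g$ (understood mod $2g$), i.e.\ $K$ is a union of conjugate pairs $\{\ell,\ell+g\}$, and more generally the same holds with any invariant class in place of $\alpha^i\beta^j$. So the only terms that can contribute are those with $K$ of this form. The remaining indices, namely $\{1,\ldots,2g\}\setminus K$, must then be covered by $\theta$'s coming from $b_1^j$ (for $j\in J_1$) and from the $\theta$-branch of $b_2^j$ (for $j\in J_2\setminus K$); since each index is used exactly once, this forces a disjoint-union/partition of $\{1,\ldots,2g\}$ and, combined with $K=K+g$, one translates the requirement into a statement about the index sets. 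The final step is to check that the hypothesis $J_1\setminus I_1 \neq (J_2\setminus I_2)+g$ is exactly what rules out the existence of any such admissible configuration: roughly, $I_1,I_2$ account for indices forced into conjugate-pair structures automatically, while the ``leftover'' indices $J_1\setminus I_1$ must be matched up, via the $b_2$-$\theta$ branch, with $(J_2\setminus I_2)+g$, and if these two sets disagree no term can have the surviving $\psi$-set closed under $j\mapsto j+g$.

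\textbf{Main obstacle.} I expect the combinatorial bookkeeping of the third and fourth steps to be the crux: carefully tracking, through the K\"unneth expansion and the slant product with $\Omega_g$, exactly which index subsets are forced where, and verifying that the stated inequality $J_1\setminus I_1 \neq (J_2\setminus I_2)+g$ is precisely the obstruction (neither too strong nor too weak) to a nonvanishing term. In particular one must handle the overlap indices $I_i = J_i\cap(J_i+g)$ correctly, since on those indices both $j$ and $j+g$ appear in $J_i$ and the $\psi_j\psi_{j+g}$ (resp.\ $\theta_j\theta_{j+g}$) factors are automatically conjugation-symmetric, so they do not contribute to the obstruction; isolating the ``essential'' part $J_i\setminus I_i$ and showing the parity/matching condition reduces to it is the delicate point. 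The signs, by contrast, are irrelevant here since we only claim vanishing.
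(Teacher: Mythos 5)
Your covering-space strategy is not the paper's route, and as written it has a genuine gap in exactly the place you flag as the crux. The paper disposes of almost every case directly on $M_g$ with Thaddeus's sign-flip trick: if $k\in J_1\setminus I_1$ and $k\notin (J_2\setminus I_2)+g$ because $k,k+g$ are both in or both outside $J_2$, one takes an orientation-preserving diffeomorphism of $\Sigma_g$ whose induced action fixes $x$ (by invariance) and all $b_i^j$ with $j\notin\{k,k+g\}$ while negating $b_i^k$ and $b_i^{k+g}$; the monomial then acquires an odd number of signs, so the pairing equals its own negative and vanishes. Only the one residual case $k\in(J_1\setminus I_1)\cap(J_2\setminus I_2)$ is pushed up to $N_g\times J_g$, where $p^\ast(b_1^kb_2^k)=2\psi_k\otimes\theta_k$ and the impossibility of also producing the partner $\psi_{k+g}$ finishes the argument. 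This is both shorter and sidesteps the bookkeeping you are worried about.

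The concrete gap in your version: you assert that the only $J_g$-factors entering $p^\ast(x)$ are powers of $\Theta$, and consequently that the surviving $\psi$-set $K$ consists precisely of those $\ell\in J_2$ whose $b_2^\ell$ took the $\psi$-branch. Neither holds for a general invariant $x$. Invariant classes such as $B_{12}=\sum_j (b_1^jb_2^{j+g}-b_1^{j+g}b_2^j)$, $B_2$, and the $d_i$ (cf.\ (\ref{eq:zagchern})) pull back to expressions involving $\Xi=\sum_j(\psi_j\otimes\theta_{j+g}-\psi_{j+g}\otimes\theta_j)$, which injects both unpaired $\theta$'s and additional $\psi$'s into the K\"unneth expansion; moreover $p^\ast(a_1)=\alpha\otimes 1+1\otimes 4\Theta$ has a $J_g$-component and cannot be ``absorbed into the $N_g$-part.'' So the matching argument must track three sources of $\theta$'s (the $b_1$-factors, the $\theta$-branches of the $b_2$-factors, and the $\Theta$- and $\Xi$-terms coming from $x$ and from $a_1$) and two sources of $\psi$'s (the $\psi$-branches of the $b_2$-factors and the $\Xi$-terms), the saving grace being that $\Theta$ contributes $\theta$'s in conjugate pairs while $\Xi$ pairs $\psi_j$ with $\theta_{j+g}$. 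With that corrected inventory the case analysis does close up (every configuration forbidden by the hypothesis dies by $\theta_m^2=0$, by $\psi_m^2=0$, or by Thaddeus's vanishing when $K\neq K+g$), but that analysis is the entire content of the proposition and is precisely the step your proposal leaves as an expectation; as it stands the proof is not complete.
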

\vspace{.35cm}

\begin{proof} We adapt Thaddeus's argument \cite{thaddeus-conformal}, and use that $\text{Sp}(2g,\Z)$ acts in the same standard way on $\{b_1^j\}$ and $\{b_2^j\}$. First, suppose $k\in J_1\setminus I_1$ and either $k,k+g$ are both in $J_2$ or both not contained in $J_2$. Take an orientation-preserving diffeomorphism $f$ of $\Sigma_g$ such that the induced action $f^\ast$ on $H^\ast(M_g;\Z)$ fixes $b_i^j$ for $j\notin\{k,k+g\}$ while $f^\ast b_i^k =-b_i^{k}$ and $f^\ast b_i^{k+g} =-b_i^{k+g}$. Then

\vspace{.25cm}
\[
    x\prod_{j\in J_1}b_1^j\prod_{j\in J_2}b_2^j [M_g] \; = \; x\prod_{j\in J_1}f^\ast b_1^j \prod_{j\in J_2}f^\ast b_2^j [M_g]
\]
\vspace{.18cm}

\noindent where we have used invariance of the pairing. The right side, by our choice of $f^\ast$ and our hypothesis on $k\in J_1$, is equal to minus the left side, forcing the pairing to be zero. The remaining case is when $k\in (J_1\setminus I_1)\cap(J_2\setminus I_2)$. Note $p^\ast(b_1^kb_2^k)=2\psi_k\otimes\theta_k$. The vanishing then follows via (\ref{eq:cup}) from the vanishing condition for the $\psi_k$, since the only way to produce $\psi_{k+g}$ via $p^\ast$ is to include $b_2^{k+g}$.
\end{proof}

\vspace{.65cm}

\noindent We can also derive an analogue of (\ref{eq:psi}) for $M_g$ using Proposition \ref{prop:pmap} and formula (\ref{eq:cup}):

\vspace{.3cm}
\begin{equation}
    x\,b_1^jb_1^{j+g}b_2^jb_2^{j+g}[M_{g}] \; = \;  \pm i^\ast x [M_{g-1}]\label{eq:mrec}
\end{equation}
\vspace{.2cm}

\noindent Here $x$ is any element of $H^\ast(M_g;\Z)$, and $i$ is the embedding of $M_{g-1}$ into $M_g$ corresponding to collapsing the $j^{\text{th}}$ handle of $\Sigma_g$.

\vspace{.65cm}

\newpage

\section{The computation of integral intersection pairings}\label{sec:maincomp}

Here we present the main computation of the paper: we prove a generalization of Theorem \ref{thm:maincomp} and its analogue for $M_g$. The proofs rely on the work of Zagier \cite{zagier}. We will use very basic symmetric function theory, background for which can be found in Appendix \ref{appendix}.\\

First, some convenient notation. Let $E$ be a complex vector bundle over an oriented, closed manifold $M$ with $\dim M$ even. For later use, we define the Chern class polynomial $c(E)_x$ to be:

\vspace{.2cm}
\[
    c(E)_x \; = \; \sum_{i\geqslant 0} c_i(E)x^i \; \in H^\ast(M;\Z)[x].
\]
\vspace{.2cm}

\noindent For a partition $\lambda = (\lambda_1,\ldots,\lambda_k)$ we write $c_\lambda(E)$ for the product $c_{\lambda_1}(E)\cdots c_{\lambda_k}(E)$. We define the {\emph{Chern number polynomial}} of the vector bundle $E$, written $\cn(E)$, by the formula

\vspace{.2cm}
\[
    \cn(E) \; = \; \sum_{\lambda} c_{\lambda}(E)[M]\cdot m_\lambda
\]
\vspace{.2cm}

\noindent in which the sum is over all partitions $\lambda$. Here $m_\lambda$ is the monomial symmetric function associated to $\lambda$. The only partitions $\lambda$ contributing nonzero terms are those with $|\lambda|=\dim(M)/2$. Thus the Chern number polynomial is a symmetric function in the variables $x_1,x_2,\ldots$ homogeneous of degree $\dim(M)/2$. It records all of the Chern numbers of the bundle $E$ over $M$.\\

In addition to $Q(T)$ and $U(T)$ from the introduction, define the following formal power series in the variable $T$ whose coefficients are in the ring $\Lambda$ of symmetric functions with integer coefficients:

\vspace{.3cm}
\[
    R(T) \; = \;  \sum_{i\geqslant 0} (-1)^i m_{(2^{i})} T^i, \qquad P(T) \; = \; \sum_{i\geqslant 0} (-1)^i \left(2 m_{( 2^i 1^2)} + \left(i+1\right)^2 m_{(2^{i+1})}\right)T^i.
\]
\vspace{.3cm}

\noindent Now, recall that for $k\leqslant g$ we have an embedding of the lower genus moduli space $M_k$ into $M_g$, and similarly of $N_k$ into $N_g$. The particular choice of embedding is not important. \\

\vspace{.5cm}

\begin{theorem}\label{thm:maincomputation} Let $Z_g = f_!V_{g}\otimes \text{{\emph{det}}}\left(V_g|_{M_g}\right)^{{\small{-1/2}}}$ be the virtual bundle with $c_i(Z_g)=z_i$. For $g\geqslant k\geqslant 1$:\\
\begin{align}
    \cn\left(Z_g|_{M_{k}}\right)  \; & = \;  \frac{(-1)^{k}}{2^{2k-1}}\cdot \underset{T^{k-1}}{\text{{\emph{Coeff}}}} \Big[P(T)^k R(T)^{g-k}Q(T)^{-1}\Big] \label{eq:mg}\\
    \nonumber {}&\\
    \cn\left(Z_g|_{N_{k}}\right)  \; &= \;  \;\;\,\frac{1}{2^{k-1}}\cdot \underset{T^{k-1}}{\text{{\emph{Coeff}}}} \Big[U(T)^k R(T)^{g-k}Q(T)^{-1}\Big]\label{eq:ng}
\end{align}
\end{theorem}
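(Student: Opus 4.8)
The plan is to reduce both identities to Thaddeus's pairing formula via the Grothendieck--Riemann--Roch expression for the Chern classes of the direct image bundle, following Zagier's computational framework. First I would observe that since the classes $z_i$ (hence $Z_g$) and, on the $N_g$ side, the $\xi_i$ are $\text{Sp}(2g,\Z)$-invariant, every Chern number appearing in $\cn(Z_g|_{M_k})$ and $\cn(Z_g|_{N_k})$ can be expressed using the intersection pairings for the Newstead classes $\alpha,\beta,\psi_j$ on $N_g$ (and their $M_g$ analogues $a_1,a_2,b_i^j$ via the covering $p$ and formula (\ref{eq:cup})). The invariance lets us collapse all the $\psi_j$-dependence using the genus recursion (\ref{eq:psi}) and its $M_g$-analogue (\ref{eq:mrec}), together with the vanishing properties of Proposition \ref{prop:bterms}; this is exactly the source of the factors $R(T)^{g-k}$, which encode the ``extra'' handles that are collapsed, while the $P(T)^k$ and $U(T)^k$ factors encode the genuinely genus-$k$ data. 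Concretely, each handle of $\Sigma_g$ beyond the $k$ we keep contributes, after slant product against the Jacobian orientation class, a fixed generating-function factor; I would isolate this by a careful bookkeeping of the K\"unneth components of $c(Z_g)_x$.

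Next I would set up the generating function for $\cn(Z_g|_{M_k})$ directly. Using (\ref{eq:linebundle}) to pass from $d_i$ to $z_i$, and the Riemann--Roch formula (\ref{eq:riemannroch}) applied to $V_g\otimes f^\ast\det(V_g|_{M_g})^{-1/2}$, the total Chern class of $Z_g$ becomes a manageable expression in $a_1,a_2,b_1^j,b_2^j$. Passing through the covering $p$ via Proposition \ref{prop:pmap}, everything becomes an expression in $\alpha,\beta,\psi_j$ and the Jacobian classes $\theta_j,\Theta$. Taking the slant product with $\Omega_g$ and applying (\ref{eq:alphabeta}) then expresses each Chern number as the coefficient extraction of a product of power series, where the appearance of Bernoulli numbers in (\ref{eq:alphabeta}) is absorbed — this is the Zagier trick — into the reciprocal $Q(T)^{-1}$, because $Q(T)$ is built from the elementary symmetric functions $e_{2n+1}$ whose generating behavior matches the Bernoulli-number weights $(2^{i-g+1}-2)B_{i-g+1}$. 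The $N_g$ version (\ref{eq:ng}) is then the specialization obtained by restricting along $\iota:N_g\hookrightarrow M_g$, where $b_1^j\mapsto 0$ and the surviving combinatorics simplifies $P(T)$ to $U(T)$ and removes the power of two from $2^{2k-1}$ down to $2^{k-1}$.

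The main obstacle I anticipate is the precise identification of the power series $P(T)$, $U(T)$, and $Q(T)^{-1}$ from the raw output of Riemann--Roch: one must show that, after summing the multinomial contributions coming from expanding $\text{ch}(V_g\otimes f^\ast\det(V_g|_{M_g})^{-1/2})(1-(g-1)\omega)$ and applying $f_\ast$, the resulting symmetric-function-valued coefficients organize exactly into $m_{(2^i1^2)}$, $(i+1)^2 m_{(2^{i+1})}$, $m_{(2^n1)}$, and $e_{2n+1}$. This is where Zagier's bookkeeping must be adapted carefully to the twist by $\det^{-1/2}$ and to keeping only $k$ of the $g$ handles ``active.'' I would verify the claimed formulas in the base case $k=1$ (where $M_1$ and $N_1$ are points, so $\cn$ is just the top Chern number $z_{3g-3}[M_g]$, resp. $\xi_{3g-3}[N_g]$) against a direct Riemann--Roch computation, and then argue the general case by the handle-collapsing recursions (\ref{eq:psi}), (\ref{eq:mrec}), which multiply the generating function by $R(T)$ per collapsed handle. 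Once the $k=1$ identities and the recursion step are in place, Theorems \ref{thm:maincomp} and \ref{thm:mainchern} of the introduction follow by setting $k=g$ and, for the Chern class version, incorporating the tangent bundle's Chern classes through the additional factor $E(T)^{-1}$.
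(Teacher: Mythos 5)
Your plan correctly identifies the ambient framework (Grothendieck--Riemann--Roch, the covering $p:N_k\times J_k\to M_k$, the slant product with $\Omega_k$, Zagier's bookkeeping), but it is missing the central engine of the argument and replaces it with an assertion. The paper's proof forms the product of Chern class polynomials $F=\prod_\ell p^\ast c(Z_g)_{-2x_\ell}$, writes it via Zagier's closed form for $p^\ast c(Z_g)_{-2x}$ as $u_0^{g-1/2}\exp(\cdots)$ in the classes $\alpha,\gamma^\ast,\Theta,\Xi$, and then applies Zagier's Proposition 3 (Lemma \ref{lemma:zagier}): a closed-form evaluation of the generating function $\sum_i \bigl(f h^i e^{w\gamma^\ast+u\alpha}\bigr)[N_i](-T/4)^{i-1}$ with denominator $\sinh\bigl(\sqrt{\beta}(u+\beta w)\bigr)$. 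It is precisely this $\sinh$ of $\sum_\ell\tanh^{-1}(x_\ell\sqrt{T})$, expanded as $R(T)^{-1/2}\sqrt{T}\,Q(T)$, that produces the factor $Q(T)^{-1}$. Your statement that the Bernoulli weights of (\ref{eq:alphabeta}) are ``absorbed'' into $Q(T)^{-1}$ because $Q$ is built from the $e_{2n+1}$ names the answer without supplying the mechanism; without Lemma \ref{lemma:zagier} (or a reproof of it) there is no derivation. Likewise the identity $R\cdot(u_1^2-Tu_2^2)=P$, which is a genuine monomial-counting computation in the paper, is flagged as an obstacle but not carried out.

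The inductive scaffolding you propose in its place does not work as stated. Your base case is misidentified: $N_1$ is a point and $M_1$ is $2$-dimensional, so $\cn(Z_g|_{N_1})=1$ and $\cn(Z_g|_{M_1})=z_{g,1}[M_1]\cdot m_{(1)}$; neither is the top pairing $z_{3g-3}[M_g]$ or $\xi_{3g-3}[N_g]$, which belong to the $k=g$ case. More seriously, the recursion (\ref{eq:psi}) (and (\ref{eq:mrec})) changes the genus of the \emph{evaluation manifold} while the ambient genus $g$ carried by the bundle $Z_g$ stays fixed; it is what justifies interpreting $\cn(Z_g|_{N_k})$ as pairings against $[N_k]$, but it cannot produce the factor $R(T)^{g-k}$, which in the actual proof comes from the exponent in $(1-\beta x^2)^{g-1/2}$ in the Chern class polynomial of $Z_g$ --- i.e., from the $g$-dependence of the bundle itself (recall the paper's caution that $\xi_{g,i}|_{N_{g-1}}\neq\xi_{g-1,i}$). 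An induction on the ambient genus would therefore have to control how $Z_{g+1}$ and $Z_g$ differ as bundles over the same $N_k$, which is exactly the content of the explicit formula (\ref{eq:zagchern}) you have not used. Finally, (\ref{eq:ng}) is not a specialization of (\ref{eq:mg}) along $\iota$: the two sides pair against fundamental classes of different dimensions ($6k-6$ versus $8k-6$); in the paper it is a parallel computation in which $\Theta$ and $\Xi$ are set to zero before the slant product, with $f,h,u,w$ chosen differently in Lemma \ref{lemma:zagier}.
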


\vspace{.85cm}

\noindent Recall from the introduction that $1/Q(T)$ does not quite have coefficients in the ring $\Lambda$ of symmetric functions: its coefficient in front of $T^i$ has a factor of $1/e_1^{i+1}$. However, since the constant coefficients of $R(T)$ and $P(T)$ are respectively $1$ and $e_1^2$, the formal power series inside the brackets of (\ref{eq:mg}) has $\Lambda$ coefficients in front of $T^i$ for $0 \leqslant i \leqslant k-1$. A similar remark holds for (\ref{eq:ng}).\\

Before proceeding to the proof, we explain how this theorem completely determines all of the integral intersection pairings in the cohomology ring of $N_g$, and most of the pairings for that of $M_g$. First of all, from the definitions, the left-hand side of (\ref{eq:ng}) is equal to

\vspace{.3cm}
\begin{equation}
    \sum_{\lambda} \xi_{g,\lambda_1}\xi_{g,\lambda_2}\cdots \xi_{g,\lambda_n}[N_k]\cdot m_{\lambda}\label{eq:cnxi}
\end{equation}
\vspace{.3cm}

\noindent and so when $g=k$, we obtain Theorem \ref{thm:maincomp} of the introduction. In this expression, we conflate $\xi_{g,i}$ with its restriction to $N_k$. On the other hand, as explained in Section \ref{sec:intersection}, the pairing $\xi_{g,\lambda_1}\cdots \xi_{g,\lambda_n}[N_k]$ is equal to $\xi_{g,\lambda_1}\cdots \xi_{g,\lambda_n} \prod_{j\in J}\psi_{j}\psi_{j+g}[N_g]$ for any subset $J\subset \{1,\ldots,g\}$ with $|J|=g-k$. With (\ref{eq:defxi}), this determines all pairings on $N_g$ for monomials involving the classes $\delta_i$ and $\psi_j$. Since $\alpha = \delta_1/(g-1)$ and $(\alpha^2-\beta)/4$ can be written in terms of $\alpha$ and $\delta_2$ by restricting (\ref{eq:d2}) to $N_g$, we get all pairings in monomials involving all the integral generators for the cohomology of $N_g$ in Proposition \ref{prop:intng}.\\

The situation for $M_g$ is quite similar, except certain pairings, such as $b_1^jb_2^{j+g}z_{\lambda_1}\cdots z_{\lambda_n}[M_g]$, are not covered by (\ref{eq:mrec}) in conjunction with (\ref{eq:mg}), and are not shown to vanish by (\ref{prop:bterms}). For the proof of Theorem \ref{thm:nilp} later, we will handle these pairings in a less direct way.

\vspace{.6cm}

\begin{proof}[Proof of Theorem \ref{thm:maincomputation}]
We first prove (\ref{eq:mg}). We will perform the computation by passing from $M_k$ to the covering space $N_k\times J_k$ via (\ref{eq:cov}). Define the following product of Chern polynomials: 

\vspace{.3cm}
\[
    F(x_1,x_2,\ldots) \; := \; \prod_{\ell\geqslant 1} p^\ast c( Z_{g})_{-2x_\ell}.
\]
\vspace{.3cm}

\noindent where $p^\ast Z_g$ is the pulled back bundle over $N_k\times J_k$. From the pairing formula (\ref{eq:cup}), and keeping note of the factors of $-2$ in the variables of the Chern polynomials, we have

\vspace{.3cm}
\[
    \left(F(x_1,x_2,\ldots)\, / \, \Omega_k \right)[N_k] \; = \; (-2)^{4k-3}\cdot \cn\left(p^\ast \left(Z_g|_{M_k}\right)\right) \; = \;  (-2)^{4k-3} 4^{k} \cdot\cn(Z_g|_{M_k}).
\]
\vspace{.3cm}

\noindent As usual, the factor $4^k$ accounts for the number of sheets of the covering $p$. On the other hand, we can give an explicit formula for the product of Chern polynomials. Henceforth, we will write $\beta$ instead of $\beta\otimes 1$, and so on, omitting the tensor notation from elements in the K\"{u}nneth decomposed cohomology $H^\ast(N_k\times J_k;\Q)$. Then, according to Zagier \cite[Eq. 29]{zagier}:

\vspace{.3cm}
\begin{equation}
    p^\ast c(Z_g)_{-2x} \; = \; \left(1-\beta x^2\right)^{g-1/2}\left(\frac{1+\sqrt{\beta} x}{1-\sqrt{\beta} x}\right)^{\gamma^\ast/2\beta\sqrt{\beta}}\text{exp}\left(\frac{ 4\Theta x  + 4 \Xi  x^2 - 2\gamma x/\beta}{1-\beta x^2}\right)\label{eq:zagchern}
\end{equation}
\vspace{.3cm}

\noindent in which $\Xi = \sum_{j=0}^i \psi_j\otimes \theta_{j+g} - \psi_{j+g}\otimes \theta_j$ and $\gamma^\ast = 2\gamma + \alpha\beta$. Zagier actually considers the direct image of a universal bundle over $N_k\times J_k$, rather than taking the direct image on $M_k$ and then pulling back. This is why (\ref{eq:zagchern}) has $4$'s in front of $\Theta$ and $\Xi$. We then compute the product to be

\vspace{.2cm}
\begin{equation}\label{eq:F}
    F(x_1,x_2,\ldots) \; = \; u_0^{g-1/2}\text{exp}\left((u_3-u_1)\gamma^\ast/\beta  + u_1\alpha  + 4 u_1\Theta + 4 u_2\Xi \right),
\end{equation}
\vspace{.2cm}

\noindent with the terms $u_m=u_m(\beta)$ for $0\leqslant m \leqslant 3$, which are formal power series in $\beta$ with coefficients that are symmetric polynomials in the variables $x_\ell$, defined as follows:

\vspace{.2cm}
\[
    u_0 \; := \; \prod_{\ell\geqslant 1}(1-\beta x_\ell^2), \quad u_1 \; := \; \sum_{\ell\geqslant 1}\frac{x_\ell}{1-\beta x_\ell^2},\quad u_2 \; := \; \sum_{\ell\geqslant 1}\frac{x_\ell^2}{1-\beta x_\ell^2}, \quad u_3 \; := \; \sum_{\ell\geqslant 1 } \tanh^{-1}\left(x_\ell\sqrt{\beta}\right)/\sqrt{\beta}
\]
\vspace{.2cm}

\noindent To take the slant product of $F(x_1,x_2,\ldots)$ with $\Omega_g$, we use the following fact, which is provided by a slight restatement of \cite[Cor. to Lemma 3]{zagier} :

\vspace{.2cm}
\[
    \text{exp}\left(\Theta\kappa + \Xi \nu \right) \, / \; \Omega_k \; = \; \kappa^k\text{exp}\left(\nu^2\gamma/2\kappa\right).
\]
\vspace{.2cm}

\noindent Applying this to the expression (\ref{eq:F}), we obtain the expression

\vspace{.2cm}
\begin{equation}
    F(x_1,x_2,\ldots) \, / \, \Omega_k \; = \; 4^k u_1^k u_0^{g-1/2} \text{exp}\left( ((u_3-u_1)/\beta + u_2^2/u_1)\gamma^\ast  + (u_1-\beta u_2^2/u_1)\alpha \right).\label{eq:Fslant}
\end{equation}
\vspace{.2cm}

\noindent We are now in a position to use the following result of Zagier:\\

\vspace{.2cm}

\begin{lemma}[\cite{zagier} Prop. 3]\label{lemma:zagier}
    Let $f,h,u,w$ be power series in one variable, $h(0)u(0)\neq 0$. Then
    \begin{equation}
        \sum_{i\geqslant 1}  \left(f(\beta)h(\beta)^i e^{w(\beta)\gamma^\ast  + u(\beta) \alpha}\right) [N_i] \left(-\tfrac{1}{4} T\right)^{i-1} \; = \; \frac{\sqrt{\beta} f(\beta)M'(T)}{\sinh\left(\sqrt{\beta}(u(\beta) + \beta w(\beta))\right)}\biggr\rvert_{\beta = M(T)} \label{eq:zagier}
    \end{equation}
    where $M(T)$ is the power series defined by $M^{-1}(\beta) = \beta / u(\beta)h(\beta)$.\\
\end{lemma}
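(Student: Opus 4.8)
The statement is Proposition~3 of Zagier \cite{zagier}, and in the paper it suffices to quote it; for completeness here is the plan one would follow to reprove it. The idea is to reduce everything to Thaddeus's pairing formulas (\ref{eq:alphabeta}) and (\ref{eq:psi}) and then carry out a Lagrange inversion. First I would unwind the left-hand side: for a fixed $i$, the quantity $(f(\beta)h(\beta)^i e^{w(\beta)\gamma^\ast + u(\beta)\alpha})[N_i]$ picks out the degree $6i-6$ component. Using $\gamma^\ast = 2\gamma + \alpha\beta$, one rewrites the exponential as $e^{(u(\beta)+\beta w(\beta))\alpha + 2w(\beta)\gamma}$ and expands; since $\gamma^k$ is a scalar multiple of $\upsilon_k$, which vanishes for $k \geqslant i$, the expansion is finite, and every surviving monomial is a power series in $\beta$ times a term $\alpha^a\beta^b\gamma^k$ with $a+2b+3k = 3i-3$. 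The recursion (\ref{eq:psi}) then reexpresses the pairing of such a monomial as a multiple of $\alpha^a\beta^b[N_{i-k}]$, so that the entire left side becomes, term by term, a rational combination of the explicit Bernoulli-number pairings (\ref{eq:alphabeta}).

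Second, I would resum the resulting double sum, over the genus index $i$ and the genus-drop $k$. After reindexing by $i' = i-k$, the powers $h(\beta)^{i'+k}$, the factors $w(\beta)^k/k!$ coming from the $\gamma$-part of the exponential, and the genus-reduction constants from (\ref{eq:psi}) separate out: the sum over $k$ produces a factor depending only on $\beta$, while the sum over $i'$ of $h(\beta)^{i'}$ against a fixed power of $\beta$ is exactly a Lagrange--Bürmann inversion. Defining $M(T)$ by $M^{-1}(\beta) = \beta/u(\beta)h(\beta)$ converts this sum into evaluation at $\beta = M(T)$, with the Jacobian $M'(T)$ absorbing the shift in the exponent $i-1$; the hypothesis $h(0)u(0)\neq 0$ is precisely what guarantees that $M^{-1}(\beta)$ has a nonzero linear term, so that $M(T)$ exists as a formal power series.

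Finally, I would match the combinatorics: the coefficients $\tfrac{i!}{(i-g+1)!}$, $2^{2g-2}$, $2^{i-g+1}-2$ and the sign $(-1)^g$ appearing in (\ref{eq:alphabeta}) are, after the index shift $n = i-g+1$, exactly the Taylor coefficients of $x/\sinh x = \sum_{n\geqslant 0}\tfrac{(2-2^{2n})B_{2n}}{(2n)!}x^{2n}$, which is why the closed form carries $\sinh\!\big(\sqrt{\beta}(u(\beta)+\beta w(\beta))\big)$ in the denominator; the argument $u(\beta)+\beta w(\beta)$ appears because $u(\beta)\alpha$ and the $\alpha\beta$-part of $w(\beta)\gamma^\ast$ feed the same weight into the $\alpha$-powers, while the remaining $2w(\beta)\gamma$ part is what is consumed by the genus recursion and the $k$-summation above. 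The leftover factor $\sqrt{\beta}f(\beta)M'(T)$ is then precisely what survives.

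The main obstacle is this last bookkeeping step: simultaneously performing the Lagrange inversion and recognizing the $x/\sinh x$ expansion while keeping exact track of the powers of $2$, the normalizing factors $(-\tfrac14)^{i-1}$, and the sign $(-1)^g$; a sign or factor-of-two slip anywhere propagates. In practice the most efficient route for the paper is simply to invoke Zagier's Proposition~3 as stated, since all of this is carried out in \cite{zagier}, and only the closed form on the right-hand side is used downstream.
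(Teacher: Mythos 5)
The paper offers no proof of this lemma at all --- it is quoted verbatim as Proposition 3 of Zagier \cite{zagier} --- and your bottom-line conclusion that one simply invokes Zagier's result is exactly what the paper does. Your supplementary sketch (expanding $e^{w\gamma^\ast+u\alpha}$ via $\gamma^\ast=2\gamma+\alpha\beta$, reducing to Thaddeus's pairings (\ref{eq:alphabeta}) and (\ref{eq:psi}), then resumming by Lagrange inversion and recognizing the Bernoulli coefficients of $x/\sinh x$) is a faithful outline of Zagier's own argument, so there is nothing to correct.
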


\vspace{.2cm}

\noindent We apply this in such a way as to avoid taking any functional inverses, i.e. such that $M(T)=T$. This is equivalent to choosing $h=1/u$. With this in mind, apply the lemma to (\ref{eq:Fslant}) with the following:

\vspace{.2cm}
\[
     w \; = \; (u_3-u_1)/\beta +u_2^2/u_1, \quad u \; = \; u_1 - \beta u_2^2/u_1, \quad f \; = \; 4^k u_1^k u_0^{g-1/2}  u^k, \quad h \; = \; 1/u
\]
\vspace{.2cm}

\noindent Now note that $u + \beta w = u_3$. Henceforth $\beta=T$. Thus the denominator in (\ref{eq:zagier}) is equal to

\vspace{.2cm}
\begin{equation}
     \sinh\left(\sum_{\ell \geqslant 1} \tanh^{-1}\left(x_\ell\sqrt{T}\right)\right) \; = \; \frac{1}{2} \prod_{\ell \geqslant 1} \left(\frac{1 + x_\ell\sqrt{T}}{1 - x_\ell\sqrt{T}}\right)^{1/2} - \frac{1}{2}\prod_{\ell \geqslant 1} \left( \frac{1 - x_\ell\sqrt{T}}{1 + x_\ell\sqrt{T}}\right)^{1/2}.\label{eq:denom}
\end{equation}
\vspace{.2cm}

\noindent After taking common denominators, with a bit of manipulation we see that (\ref{eq:denom}) is equal to 

\vspace{.2cm}
\[
     u_0(T)^{-1/2} \cdot \sum_{\substack{J \subset \{1,2,\ldots\}\\ |J| \text{ odd}}} \sqrt{T}^{|J|} \prod_{\ell\in J} x_\ell \; = \; R(T)^{-1/2}\sqrt{T} \cdot Q(T)
\]
\vspace{.2cm}

\noindent where $Q(T)$ is defined above, and we've observed that $R(T)=u_0(T)$. Thus the right hand side of (\ref{eq:zagier}) can be identified as the power series in $T$ with coefficients in $\Lambda$ given by

\vspace{.2cm}
\[
     4^k R^{g}(u_1^2- T u_2^2)^k/Q,
\]
\vspace{.2cm}

\noindent where $R=R(T)$, and so forth. The remaining step is to show that $R(u_1^2-Tu_2^2)=P$. Indeed, this implies that the above expression is equal to $4^k P^k R^{g-k}/Q$, from which the proposition is proven by taking the coefficient of $T^{k-1}$ on both sides of (\ref{eq:zagier}). To show $R(u_1^2-Tu_2^2)=P$, we first observe

\vspace{.2cm}
\[
     u_1 \pm \sqrt{T} u_2 \; = \;  \sum_{\ell\geqslant 1} \frac{x_\ell \pm \sqrt{T} x_\ell^2}{1- T x_\ell^2} \; = \; \sum_{\ell \geqslant 1} \frac{x_\ell}{1 \mp \sqrt{T} x_\ell}.
\]
\vspace{.2cm}

\noindent Now, set $u_0^\pm = \prod_{\ell \geqslant 1} (1 \pm \sqrt{T} x_\ell)$ so that $u_0=u_0^+u_0^-$. Then $u_0(u_1^2-Tu_2^2)$ is the product of $u_0^+(u_1-\sqrt{T} u_2)$ and $u_0^-(u_1+\sqrt{T} u_2)$, and treating these two factors separately leads to the expression

\vspace{.2cm}
\begin{equation*}
     u_0(u_1^2-Tu_2^2) \; = \; \left(\sum_{\ell \geqslant 1} x_\ell \prod_{\ell\neq k \geqslant 1} \left(1+\sqrt{T} x_k\right)\right)\left(\sum_{m \geqslant 1} x_m \prod_{m\neq n \geqslant 1} \left(1-\sqrt{T} x_n\right) \right).
\end{equation*}
\vspace{.2cm}

\noindent We can then multiply the two terms on the right to get the following, noting along the way that the coefficients in front of odd powers of $\sqrt{T}$ are zero, as expected:

\vspace{.2cm}
\begin{equation}
     u_0(u_1^2-Tu_2^2) \; = \; \sum_{\ell \geqslant 1} x_\ell^2 \prod_{\ell \neq k \geqslant 1} \left(1- T x_k^2\right) + 2 \sum_{\ell > m \geqslant 1} x_\ell x_m \left(1- T x_\ell x_m\right) \prod_{\substack{j \geqslant 1\\ j\neq m,\ell}} \left(1- T x_j^2\right).\label{eq:unaught}
\end{equation}
\vspace{.2cm}

\noindent Now we identify the monomials in the variables $x_\ell$ that appear in (\ref{eq:unaught}), in order the rewrite it in terms of monomial symmetric functions.  In the first sum on the right side of (\ref{eq:unaught}), the only monomials are of the form $x_{r_1}^2x_{r_2}^2\cdots x_{r_{i}}^2$. These are the monomials that appear in $m_\lambda$ for which $\lambda = (2^{i})$ is the partition with $i$ parts all equal to $2$. For each set of distinct indices $r_1,\ldots,r_{i}$, there are $i$ instances of the monomial $x_{r_1}^2x_{r_2}^2\cdots x_{r_{i}}^2$ in the sum under consideration, one for each time $\ell = r_j$ where $j=1,\ldots,i$. In other words, taking into account the signs, and keeping track of powers of $T$, we deduce

\vspace{.2cm}
\[
      \sum_{\ell \geqslant 1} x_\ell^2 \prod_{\ell \neq k \geqslant 1} \left(1- T x_k^2\right) \; = \; \sum_{i\geqslant 0} (-1)^{i}(i+1)m_{(2^{i+1})}T^{i}.
\]
\vspace{.2cm}

\noindent Now we consider the second sum on the right side of (\ref{eq:unaught}), in which we can count two kinds of monomials: those of the form $x_{\ell}x_{m}x_{r_1}^2\cdots x_{r_{i}}^2$ which belong to the partition $(2^{i} 1^2)$ and those of the form $x_{r_1}^2x_{r_2}^2\cdots x_{r_{i}}^2$ that belong, as before, to the partition $(2^i)$. The first kind are easy to count: apart from signs, there is exactly one. For the second kind, ignoring signs and powers of $T$, note that for any distinct indices $r_1,\ldots,r_i$ we get ${i \choose 2}$ many instances of the monomial $x_{r_1}^2x_{r_2}^2\cdots x_{r_{i}}^2$ after expanding, for the different possibilities of choosing which indices among the $r_j$ are $\ell$ and $m$. Thus

\vspace{.2cm}
\[
       2 \sum_{\ell > m \geqslant 1} x_\ell x_m \left(1- T x_\ell x_m\right) \prod_{\substack{j \geqslant 1\\ j\neq m,\ell}} \left(1- T x_j^2\right) \; = \; 2 \sum_{i\geqslant 0} (-1)^{i}\left(m_{(2^{i} 1^2)} + {i+1 \choose 2} m_{(2^{i+1})}\right)T^{i}
\]
\vspace{.2cm}

\noindent in which we interpret ${1\choose 2}=0$. Now, adding these two expressions involving monomial symmetric functions as in the right side of (\ref{eq:unaught}) easily leads to the expression that defines $P(T)$. This completes the proof of (\ref{eq:mg}).\\

The computation of (\ref{eq:ng}) is quite similar. First, $\cn(Z_g|_{N_g})$ is the restriction of (\ref{eq:F}) from $N_k\times J_k$ to the factor $N_k$, which simply sets $\Theta$ and $\xi$ to zero, evaluated against $[N_k]$. We then apply (\ref{eq:zagier}) to compute this rvaluation by setting $w=(u_3-u_1)/\beta$, $u=u_1$, $f=u_0^{g-1/2}u^k$ and $h=1/u$. The computation proceeds just as above, but is simpler. We obtain that $\cn(Z_g|_{N_g})$ is $1/2^{g-1}$ times the coefficient of $T^{k-1}$ of the expression $u_0^g u_1^k/Q$, and it is straightforward to identify $u_0u_1=U$.
\end{proof}

\vspace{.75cm}

\noindent We showed in the proof that $P^k R^{g-k} /Q$ in expression (\ref{eq:mg}) is equal to $4^ku_0^g(u_1^2-Tu_2^2)^k/Q$, and similarly $U^k R^{g-k} /Q$ in expression (\ref{eq:ng}) is equal to $u_0^gu_1^k/Q$. Note from the definitions that

\vspace{.3cm}
\[
    u_1(T) \; = \; \sum_{i\geqslant 0} p_{2i+1}T^i, \qquad \qquad u_2(T) \; = \; \sum_{i\geqslant 0}  p_{2i+2}T^i
\]
\vspace{.3cm}

\noindent in which $p_n$ is the $n^\text{th}$ power sum symmetric function.
\noindent We also mention that an expression for $P(T)$ in terms of elementary symmetric polynomial is given as follows:

\vspace{.2cm}
\[
    P(T) \; = \; \sum_{n\geqslant 0} (-1)^n\left(\widehat{e}_{n+1}^2 - 2\sum_{i=0}^{n-1}(-1)^i\widehat{e}_{n-i}\widehat{e}_{n+2+i}\right)T^n
\]
\vspace{.2cm}

\noindent where we have defined $\widehat{e}_k = ke_k$. We will not use this, and leave its verification to the reader.\\

\vspace{.3cm}

We also mention a generalization of (\ref{eq:mg}) which incorporates the classes $b_1^j$ into the pairings. The proof is a modification of the proof for Theorem \ref{thm:maincomputation}, and so we only briefly sketch it. The goal is to find a formula for a power series in $T$ whose coefficients are in $\Lambda[t]$, and such that the coefficient of $m_\lambda t^j T^{k-1}$ is the pairing of the monomial $z_{g,\lambda_1}\cdots z_{g,\lambda_n}B^j_1$ against $[M_k]$. To achieve this, since $p^\ast(B_1)=4\Theta$, within the exponential of (\ref{eq:F}) we add the term $4\Theta t$. We then proceed with the computation as before, and at the end, the extraction of the coefficient in front of $t^jT^{k-1}$ suitably normalized gives the pairings we want. Next, we observe that $B^j_1$ is the sum of $\prod_{i\in J}b_1^ib_1^{i+g}$ with $|J|=j$. The pairing for each term in this product with $z_{g,\lambda_1}\cdots z_{g,\lambda_n}$ against $[M_k]$ is the same, by $\text{Sp}(2g,\Z)$-invariance. This allows us to write the result in terms of the classes $b_1^j$. The result is:\\

\vspace{.4cm}

\begin{prop}\label{prop:maincomputation} For $g\geqslant k\geqslant 1$, $J\subset \{1,\ldots,g\}$, $j=|J|$, and $\lambda$ a partition with $|\lambda|=4k-3+j$:

\vspace{.1cm}

    \[
      z_{g,\lambda_1}\cdots z_{g,\lambda_m}\prod_{i\in J}b_1^ib_1^{i+g}[M_{k}]  \; = \;  \frac{(-1)^{k}}{2^{2k-1}}{j \choose k}\cdot \underset{ m_\lambda T^{k-1}}{\text{{\emph{Coeff}}}} \left[ R(T)^{g+k-2j}U(T)^{j}P(T)^{j-k}Q(T)^{-1}\right]
    \]
\end{prop}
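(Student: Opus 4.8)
The plan is to rerun the proof of Theorem \ref{thm:maincomputation} essentially unchanged, carrying along one additional formal variable $t$ that bookkeeps powers of $B_1$. As in that proof, work on the $4^k$-fold covering $p\colon N_k\times J_k\to M_k$ of (\ref{eq:cov}). By Proposition \ref{prop:pmap} we have $p^\ast(B_1)=4\Theta$, so for any subset $J\subset\{1,\ldots,g\}$ with $|J|=j$ one has $p^\ast\big(\prod_{i\in J}b_1^ib_1^{i+g}\big)=4^{j}\prod_{i\in J}\theta_i\theta_{i+g}$, while $\sum_{|J|=j}\prod_{i\in J}\theta_i\theta_{i+g}=\Theta^j/j!$. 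Since all $\binom{g}{j}$ of these pairings against a fixed $\text{Sp}(2g,\Z)$-invariant monomial coincide, it suffices to compute the generating function whose $t^j$-coefficient records pairing against $B_1^j$; concretely, replace the product of Chern polynomials $F(x_1,x_2,\ldots)$ of (\ref{eq:F}) by $F(x_1,x_2,\ldots)\exp(4\Theta t)$ before slanting with $\Omega_k$.

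First I would redo the slant product of this new expression with $\Omega_k$. Adding $4\Theta t$ to the exponent of (\ref{eq:F}) changes the coefficient of $\Theta$ from $4u_1$ to $4(u_1+t)$ while leaving the coefficient $4u_2$ of $\Xi$ unchanged, so applying the Corollary to Lemma 3 of \cite{zagier} exactly as before, now with $\kappa=4(u_1+t)$ and $\nu=4u_2$, produces the analogue of (\ref{eq:Fslant}) in which each $u_1$ appearing in the power $\kappa^k$ and in the $\gamma$-term is replaced by $u_1+t$. The crucial point is that the resulting $\gamma^\ast$-coefficient $w=(u_3-u_1)/\beta+u_2^2/(u_1+t)$ and $\alpha$-coefficient $u=u_1-\beta u_2^2/(u_1+t)$ still satisfy $u+\beta w=u_3$, that cancellation being insensitive to the shift $u_1\mapsto u_1+t$.

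Next I would apply Lemma \ref{lemma:zagier} with $h=1/u$ (so $M(T)=T$, avoiding any functional inversion) and $f=4^k(u_1+t)^ku^ku_0^{g-1/2}$, with coefficients now in the ring of symmetric functions adjoined $t$. Because $u+\beta w=u_3$ as before, the hyperbolic sine in the denominator of (\ref{eq:zagier}) is unchanged, equal to $u_0(T)^{-1/2}\sqrt{T}\,Q(T)$; hence the right-hand side of (\ref{eq:zagier}) collapses to $4^k u_0^g\big(u_1^2+tu_1-Tu_2^2\big)^k/Q$. Using the identities $u_0(u_1^2-Tu_2^2)=P$ and $u_0u_1=U$ already established in the proof of Theorem \ref{thm:maincomputation}, this equals $4^k R^{g-k}(P+tU)^k/Q$. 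Expanding $(P+tU)^k$ by the binomial theorem, isolating the coefficient of $t^jT^{k-1}m_\lambda$, re-expressing $B_1^j$ in terms of a single monomial $\prod_{i\in J}b_1^ib_1^{i+g}$ via the $\binom{g}{j}$-fold $\text{Sp}(2g,\Z)$-symmetry, and normalizing the powers of $2$ and $-1$ exactly as in the derivation of (\ref{eq:mg}), yields the stated pairing formula.

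The main obstacle is the arithmetic of this last normalization step: one must correctly combine the factor $4^k$ from the sheets of the covering, the $(-2)$'s built into the Chern-polynomial variables, the factorial $1/j!$ introduced by $\exp(4\Theta t)$, and the combinatorial factor counting the $\binom{g}{j}$ subsets $J$, all while respecting the homogeneity constraint forced by $\dim M_k=8k-6$. Everything else is a mechanical transplant of the proof of Theorem \ref{thm:maincomputation} with the single extra variable $t$ carried through; in particular, no new symmetric-function identities are needed beyond $u_0(u_1^2-Tu_2^2)=P$ and $u_0u_1=U$, which are exactly the ones verified there.
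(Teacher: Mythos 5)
Your proposal is essentially the paper's own (sketched) proof: the paper likewise inserts the term $4\Theta t$ into the exponential of (\ref{eq:F}), reruns the slant product and the application of Lemma \ref{lemma:zagier} with $u_1$ shifted to $u_1+t$ in the relevant places, and converts $B_1^j$ into a single monomial $\prod_{i\in J}b_1^ib_1^{i+g}$ via $\mathrm{Sp}(2g,\Z)$-invariance. One caution: your (correct) expansion gives the $t^j$-coefficient $\binom{k}{j}R^{g-k}U^{j}P^{k-j}/Q$, which is the version consistent with (\ref{eq:mg}) at $j=0$, whereas the Proposition as printed has $\binom{j}{k}$ and the exponents $g+k-2j$ and $j-k$; so when you carry out the final normalization do not force agreement with the printed exponents, which appear to have $j$ and $k$ transposed.
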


\vspace{.65cm}

\noindent We can go further and try to incorporate the classes $b_2^j$. For this we may use the same method sketched above, but instead of only adding one formal variable $t$ to keep track of the powers of $B_1$ in the pairings, we add three variables to record separately the powers of $B_1$, $B_2$ and $B_{12}$. However, it is not clear that pairings between $z_{g,\lambda_1}\cdots z_{g,\lambda_n}$ and a general monomial in the classes $b_1^j$ and $b_2^j$ can be extracted from this data.

\vspace{.45cm}

\vspace{.15cm}

\subsection{Extracting pairings via specializations}\label{sec:spec}

Before proving Theorem \ref{thm:mainchern}, we digress and show how to recover formula (\ref{eq:a1}) from Theorem \ref{thm:maincomputation}. We then compute some other pairings in a similar way.\\

We use the well-known method of specializations in the theory of symmetric functions. There is a ring homomorphism, $\text{ex}:\Lambda \longrightarrow \Q$ from the ring of symmetric functions to the rationals, characterized, for example, by its evaluation on the monomial symmetric functions:

\vspace{.15cm}
\[
    \text{ex}\left(m_{(1^n)}\right) \; = \; 1/n!, \qquad  \text{ex}\left(m_{\lambda}\right) \; = \; 0 \text{ if } \lambda \neq (1^n) \text{ for some } n
\]
\vspace{.15cm}

\noindent In particular, if $f\in \Lambda$ is a homogeneous symmetric function of degree $n$, then we have

\vspace{.15cm}
\[
    \text{ex}\left(f\right) \; = \; \frac{1}{n!}\cdot \underset{{x_1x_2\cdots x_n}}{\text{Coeff}}\left[f\right]
\]
\vspace{.15cm}

\noindent This homomorphism is a version of what's often called the {\emph{exponential specialization}} for symmetric functions. In general, a {\emph{specialization}} is just a homomorphism from $\Lambda$ to another ring. The homomorphism $\text{ex}$ just defined extends in the obvious way to a homomorphism $\text{ex}:\Lambda[[T]]\longrightarrow \Q[[T]]$. We can then directly apply this homomorphism to our previously defined power series:

\vspace{.1cm}
\[
    \text{ex}\left(R(T)\right) \; = \; 1, \qquad \text{ex}\left(P(T)\right) \; = \; 1, \qquad \text{ex}\left(Q(T)\right) \; = \; \sinh\sqrt{T}/\sqrt{T}.
\]
\vspace{.1cm}

\noindent We can now see that applying $\text{ex}$ to the computation (\ref{eq:mg}) of Theorem \ref{thm:maincomputation} with $g=k$ yields

\vspace{.3cm}
\[
    z_{1}^{4g-3}[M_g] \; = \; (4g-3)!\cdot \text{ex}\left(\cn\left( Z_g\right)\right) \; = \; (4g-3)!\cdot\frac{(-1)^g}{2^{2g-1}} \underset{T^{g-1}}{\text{Coeff}}\left[\frac{\sqrt{T}}{\sinh\sqrt{T}}\right].
\]
\vspace{.3cm}

\noindent At this point we recall an identity for the Bernoulli numbers, which may as well be taken as a convenient definition of $B_n$ for our purposes, which holds for even indices $n$:

\vspace{.15cm}
\[
    \underset{T^{g-1}}{\text{Coeff}}\left[\frac{\sqrt{T}}{\sinh \sqrt{T}}\right] \; = \; -\frac{(2^{2g-2}-2)B_{2g-2}}{{(2g-2)}!}.
\]
\vspace{.15cm}

\noindent Finally, the identity $a_1=-2z_{1}$ recovers formula (\ref{eq:a1}), the expression for the pairing of the top degree power of the class $a_1$. We can similarly recover the formula (\ref{eq:alphabeta}) with $i=3g-3$ for the pairing $\alpha^{3g-3}[N_g]$ upon observing $\text{ex}(U(T))=1$.\\

We can generalize the discussion and perform a similar extraction to obtain a formula for pairings of the form $a_{1}^{4g-3-k}z_{k}[M_g]$. For this, we consider the specialization $\overline{\text{ex}}:\Lambda\longrightarrow \Q[x]$ characterized by sending a homogeneous symmetric function $f$ of degree $n$ to the following:

\vspace{.15cm}
\[
    \overline{\text{ex}}\left(f\right) \; = \; \sum_{k \geqslant 0} \frac{x^{k}}{(n-k)!} \cdot \underset{x_1^k x_2 x_3 \cdots x_{n-k+1} }{\text{Coeff}}\left[f\right].
\]
\vspace{.15cm}

\noindent This extends in the obvious way to a homomorphism $\overline{\text{ex}}:\Lambda[[T]]\longrightarrow \Q[x][[T]]$, and is equal to the\\

\noindent above $\text{ex}$ if we set $x=0$. From the definition note that for $n>0$ we have

\vspace{.15cm}
\[
    \overline{\text{ex}}\left(m_{(1^n)}\right) \; = \; x/(n-1)! + 1/n!, \qquad  \overline{\text{ex}}\left(m_{(1^{n-k}k)}\right) \; = \; x^k/(n-k)! \;\;\text{  for }\; k >1.
\]
\vspace{.15cm}

\noindent We can apply this homomorphism to our power series just as before, and we get:

\vspace{.15cm}
\[
    \overline{\text{ex}}\left(R(T)\right) \; = \; 1, \qquad \overline{\text{ex}}\left(P(T)\right) \; = \; (1+x)^2 - x^2 T, \qquad \overline{\text{ex}}\left(Q(T)\right) \; = \; x\cosh\sqrt{T} + \sinh\sqrt{T}/\sqrt{T}.
\]
\vspace{.15cm}

\noindent Then applying the homomorphism $\overline{\text{ex}}$ to the formula in (\ref{eq:mg}) with $g=k$ yields the following:

\vspace{.3cm}
\[
    z_{1}^{4g-3-i}z_{i}[M_g] \; = \; (4g-3-i)!\cdot \frac{(-1)^g}{2^{2g-1}}\cdot \underset{x^i T^{g-1}}{\text{Coeff}}\left[\frac{(1+2x+x^2-x^2 T)^g}{x\cosh\sqrt{T} + \sinh\sqrt{T}/\sqrt{T}}\right]
\]
\vspace{.3cm}

\noindent We also see in this situation that a recursion property holds for lower genus moduli spaces: for $1\leqslant k\leqslant g$, we have $z_{g,1}^{4k-3-i}z_{g,i}[M_k] = z_{1}^{4k-3-i}z_{i}[M_k]$. Such a recursion always holds for any pairings obtained from a specialization that sends $R(T)$ to $1$. Similarly, noting that $\overline{\text{ex}}$ applied to $U(T)$ yields $1+x-x^2T$, we obtain the following formula for $N_g$ by applying $\overline{\text{ex}}$ to (\ref{eq:ng}) with $g=k$:

\vspace{.3cm}
\[
    \xi_{1}^{3g-3-i}\xi_{i}[N_g] \; = \;(3g-3-i)!\cdot\frac{1}{2^{g-1}}\cdot \underset{x^i T^{g-1}}{\text{Coeff}}\left[\frac{(1+x-x^2 T)^g}{x\cosh\sqrt{T} + \sinh\sqrt{T}/\sqrt{T}}\right]
\]
\vspace{.3cm}

\noindent Again, using $\xi_1=-\alpha/2$ and relation (\ref{eq:defxi}), this determines the pairings $\alpha^{3g-3-k}\delta_k[N_g]$. A similar recursive property as was mentioned above also holds for these pairings.\\

In a different direction, we can define specializations by setting some of the variables $x_1,x_2,\ldots$ in the definition of the symmetric functions equal to zero. For example, setting $x_1=x$, $x_2=y$, and $x_\ell =0$ for $\ell \geqslant 3$, we obtain a specialization $\text{ev}_{2}:\Lambda[[T]]\longrightarrow \Z[x,y][[T]]$ which acts as follows:

\vspace{.3cm}
\[
    \text{ev}_{2}\left(U(T)\right) \; = \; x(1-y^2T) + y(1-x^2 T),  \qquad \text{ev}_{2}\left(Q(T)\right) \; = \; x+y.
\]
\vspace{.3cm}

\noindent We then obtain a formula for the pairings $\xi_i \xi_j [N_g]$ with $i+j=3g-3$ by applying the specialization $\text{ev}_{2}$ to equation (\ref{eq:ng}) with $g=k$, after some elementary manipulations:

\vspace{.3cm}
\begin{equation}
    \sum_{i+j=3g-3}\xi_i \xi_j [N_g]\cdot x^iy^i \;  = \; g\cdot \frac{(-1)^{g-1}}{2^{g-1}}\cdot (xy^2+yx^2)^{g-1}\label{eq:xipair}
\end{equation}
\vspace{.3cm}

\noindent Recall from Prop. \ref{prop:xigens} that we may view $\xi_i$ as generators of the cohomology of $N_g$ with $\Z_p$ coefficients when $p$ is odd. Suppose $p=g-1$ is an odd prime. In (\ref{eq:xipair}), the constants in front of the right-hand expression are invertible mod $p$ (interpreting $1/2^{g-1}$ as the inverse of $2^{g-1}$ mod $p$), and we conclude that the only pairing of the form $\xi_i\xi_j [N_g]$ that is nonzero mod $p$ is given by $\xi_{g-1}\xi_{2g-2}[N_g]$.\\

\vspace{.65cm}

\subsection{Chern numbers for the tangent bundle}\label{sec:cherns}

In this subsection we prove Theorem \ref{thm:mainchern}. The proof is similar to that of Theorem \ref{thm:maincomputation}, so we only indicate where it differs. We write $TN_g$ for the tangent bundle of $N_g$, viewed as complex vector bundle.\\

\vspace{.2cm}

\begin{proof}[Proof of Thm. \ref{thm:mainchern}] According to Zagier \cite[eq. (27)]{zagier}, the Chern class polynomial of $N_g$ is:

\vspace{.3cm}
\[
    c(TN_g)_x \; = \; (1-\beta x^2)^{g-1}\text{exp}\left(\frac{2\alpha x}{1-\beta x^2} + 2\left(\frac{\tanh^{-1}x\sqrt{\beta}}{\beta\sqrt{\beta}} - \frac{x}{\beta (1-\beta x^2)} \right)\gamma^\ast \right)
\]
\vspace{.3cm}

\noindent where as before $\gamma^\ast = 2\gamma +\alpha\beta$. Note the relation $c(TN_g)_x = (1-\beta x^2)^{-g}c(Z_g|_{N_g})_{-2x}^2$. Proceeding as in the proof of Theorem \ref{thm:maincomputation}, the Chern number polynomial $\cn(TN_g)$ is given by $F_0(x_1,x_2,\ldots)[N_g]$ where $F_0$ is the product of the Chern class polynomials $c(TN_g)_{x_\ell}$ for $\ell\geqslant 0$:

\vspace{.3cm}
\begin{equation*}
    F_0(x_1,x_2,\ldots) \; = \; u_0^{g-1}\text{exp}\left(2(u_3-u_1)\gamma^\ast/\beta  + 2u_1\alpha\right).
\end{equation*}
\vspace{.3cm}

\noindent Here the expressions for $u_0,u_1,u_3$ are defined as before. Now we apply Lemma \ref{lemma:zagier} as was done previously, but with $w = 2(u_3-u_1)/\beta$,  $u = 2u_1$,  $f =  u_0^{g-1}  u^g$ and $h=1/u$. From this we obtain

\vspace{.3cm}
\begin{equation}
    \cn(TN_g) \; = \;  2^{g}(-4)^{g-1}\frac{\sqrt{T}\cdot u_0(T)^{g-1}u_1(T)^g}{\sinh\left(2\sum_{\ell \geqslant 1} \tanh^{-1}\left(x_\ell\sqrt{T}\right)\right)}\label{eq:cn:chern}
\end{equation}
\vspace{.3cm}

\noindent The denominator here is computed as in (\ref{eq:denom}), but now the right side of (\ref{eq:denom}) loses the fractional $1/2$ exponents due to the presence of the $2$ in (\ref{eq:cn:chern}). After a short manipulation we instead find

\vspace{.3cm}
\begin{equation*}
    \sinh\left(2\sum_{\ell \geqslant 1} \tanh^{-1}\left(x_\ell\sqrt{T}\right)\right) \; = \; \frac{1}{2} \prod_{\ell \geqslant 1} \left(\frac{1 + x_\ell\sqrt{T}}{1 - x_\ell\sqrt{T}}\right) - \frac{1}{2}\prod_{\ell \geqslant 1} \left( \frac{1 - x_\ell\sqrt{T}}{1 + x_\ell\sqrt{T}}\right) \; = \; {\frac{2\cdot Q(T)E(T)\sqrt{T}}{u_0(T)}}
\end{equation*}
\vspace{.3cm}

\noindent where $E(T)$ is defined in the introduction, and is readily identified with $\sum T^{|J|/2} \prod_{\ell\in J} x_\ell$, the sum being over finite subsets $J \subset \{1,2,\ldots\}$ of even cardinality. Finally, recalling that $u_0 u_1 = U$, we obtain from (\ref{eq:cn:chern}) the formula given in Theorem \ref{thm:mainchern}.
\end{proof}

\vspace{.7cm}

The specializations of Section \ref{sec:spec} can, of course, also be applied to this situation. As an example, since $\text{ex}(E(T))=\cosh \sqrt{T}$, applying the specialization $\text{ex}$ to Theorem \ref{thm:mainchern} yields

\vspace{.2cm}

\[
    c_1(TN_g)^{3g-3}[N_g] \; = \; (3g-3)!(-2)^{3g-3}\; \underset{T^{g-1}}{\text{Coeff}}\left[\frac{\sqrt{T}}{\sinh\sqrt{T}\cosh\sqrt{T}}\right].
\]

\vspace{.35cm}

\noindent Then, using that $c_1(TN_g)=2\alpha$, and the hyperbolic-trig identity $\sinh(2x)=2\sinh(x)\cosh(x)$, we once again recover the formula for $\alpha^{3g-3}[N_g]$ given in (\ref{eq:alphabeta}).

\newpage

\vspace{.75cm}

\subsection{Skew Schur functions}\label{sec:skew}

This section is mostly expositional, and serves to explain how the power series $1/Q(T)$ and $1/E(T)$ are generating functions for certain skew Schur symmetric functions as mentioned in the introduction. In particular, we explain (\ref{eq:qinv}). This interpretation was pointed out to the authors by Ira Gessel, and appears as a particular example in Section 11 of \cite{gesselviennot}. The reader may consult I.5 of \cite{mac} for more background on skew Schur functions.\\

We begin by defining skew Schur functions. To begin, for any partition $\lambda$, the Schur symmetric function $s_\lambda$ associated to $\lambda$ is defined as the determinant $\det(h_{\lambda_i+j-i})_{1\leqslant i,j\leqslant k}$ in which $h_{\lambda}$ is the complete monomial symmetric function (see appendix \ref{appendix}), and $k$ is the length of the partition $\lambda$. One of the {\emph{Jacobi-Trudi}} identities says that $s_\lambda$ is also equal to $\det(e_{\lambda'_i+j-i})_{1\leqslant i,j\leqslant k}$ where $\lambda'$ is the partition conjugate to $\lambda$. More generally, the {\emph{skew}} Schur symmetric function $s_{\lambda/\mu}$ is equal to $\det(h_{\lambda_i-\mu_i+j-i})_{1\leqslant i,j\leqslant k}$. By a Jacobi-Trudi identity, we have the following identity:

\vspace{.18cm}
\begin{equation}
    s_{\lambda/\mu} \; = \; \det(e_{\lambda'_i-\mu'_j+j-i})_{1\leqslant i,j\leqslant k}\label{eq:skews}
\end{equation}
\vspace{.1cm}

\noindent In this situation, $\mu$ is always a subpartition of $\lambda$, and the pair of data $(\lambda,\mu)$ is often called a skew partition, and written $\lambda/\mu$.\\

We turn to some general remarks on generating functions and determinants which are standard in enumerative combinatorics, see e.g. \cite{stanley}. Suppose that  $a_i$ with $i\geqslant 0$ are a list of elements in some commutative ring. Then the reciprocal of the generating function $\sum_{i\geqslant 0} a_i T^i$ has coefficients in terms of some determinants formed from the $a_i$ up to some powers of $a_0$:

\vspace{.3cm}
\[
    \sum_{n \geqslant 0} \frac{1}{a_0^{n+1}}\det \left((-1)^{j-i+1}a_{j-i+1}\right)_{1\leqslant i,j\leqslant n} \, T^n \; = \; \frac{1}{\sum_{i\geqslant 0}  a_i T^i}
\]
\vspace{.3cm}

\noindent As written, we are assuming the element $a_0$ is invertible. More generally, as long as $a_0$ is not a zero-divisor, then the coefficient of $T^n$ on the left hand side after multiplying by $a_0^{n+1}$ is a well-defined element of the ring with which we started. Now we set $a_i = e_{2i+1}$ so that the right hand side is equal to $1/Q(T)$. In this application, the commutative ring is the ring of symmetric polynomials, and $a_0=e_1$. Then, defining $r_n$ to be $e_1^{n+1}$ times the coefficient of $T^n$ in $1/Q(T)$ we obtain that $r_n = \det((-1)^{j-i+1}e_{2j-2i+3})_{1\leqslant i,j\leqslant n}$. Upon observing that $r_n$ is a homogeneous symmetric polynomial of degree $3n$, expanding the determinant allows us to factor out the sign, and we obtain:

\vspace{.3cm}
\begin{equation}
    r_n \; = \; (-1)^n\det\left(e_{2j-2i+3}\right)_{1\leqslant i,j\leqslant n}.\label{eq:rn0}
\end{equation}
\vspace{.3cm}

\noindent Now, $\lambda(n,3)' = (n+2,n+1,\ldots,4,3)$ and $\lambda(n,0)'=(n-1,n-2,\ldots,2,1)$, where $\lambda(n,m)$ is defined in the introduction. It follows from (\ref{eq:skews}) and (\ref{eq:rn0}) that $r_n$ is equal to $(-1)^ns_{\lambda(n,3)/\lambda(n,0)}$. This establishes formula (\ref{eq:qinv}) for $1/Q(T)$, and $1/E(T)=\sum s_{\lambda(n,2)/\lambda(n,0)}(-T)^n$ is similarly obtained.\\

\begin{figure}[t]
\centering
\includegraphics[scale=.95]{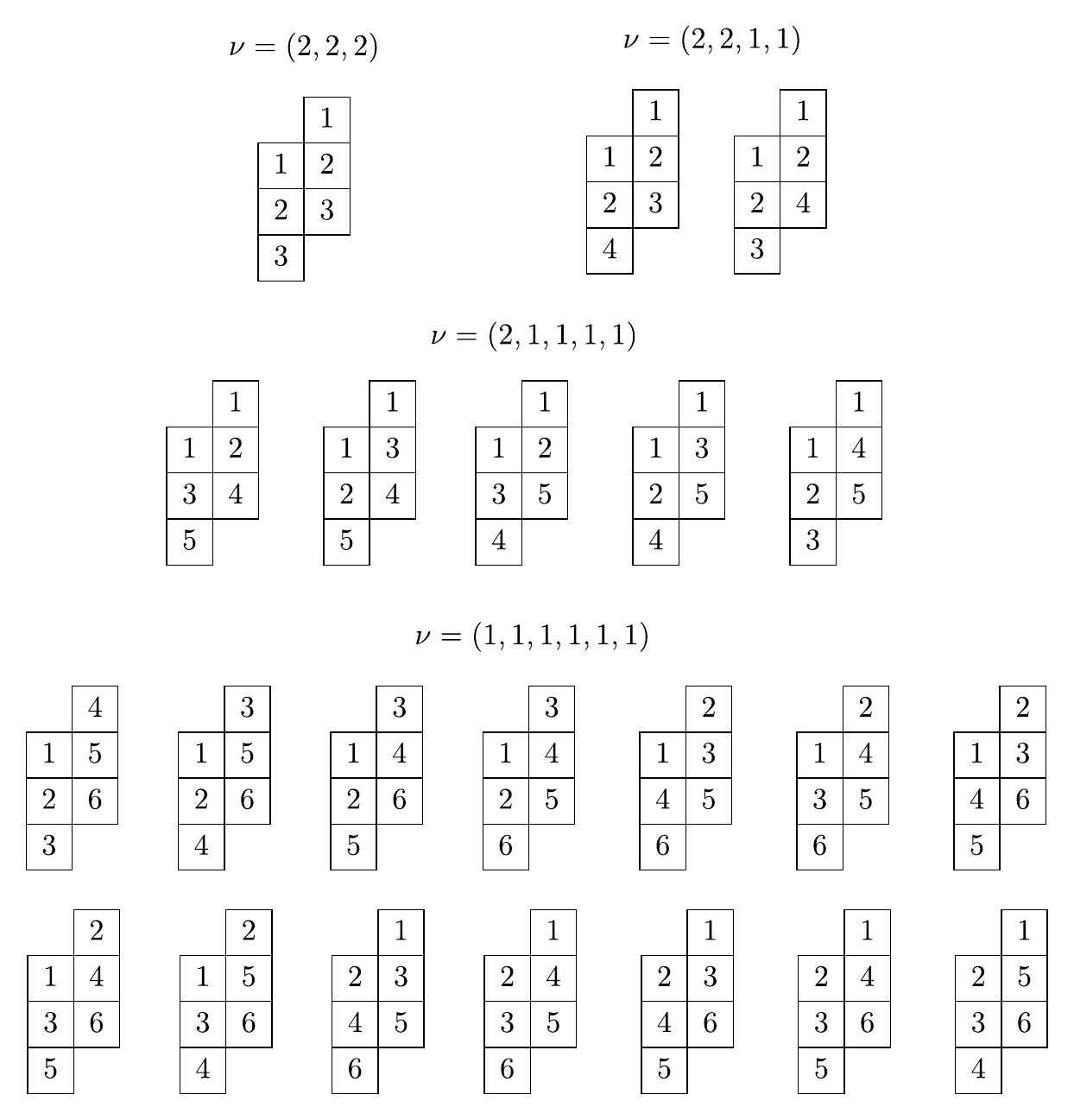}
\vspace{.3cm}
\caption{{\small{Here we list the SSYT of shape $\lambda_2/\mu_2$ with type $\nu$ for each partition $\nu$. Using equation (\ref{eq:ssyt}) we conclude $s_{\lambda_2/\mu_2} = m_{(2,2,2)} + 2m_{(2,2,1,1)} + 5m_{(2,1,1,1,1)} + 14m_{(1,1,1,1,1,1)} $}.
\vspace{.4cm}}}\label{fig:ssyt}
\end{figure}

The skew Schur function $s_{\lambda/\mu}$ admits the following combinatorial interpretation. Let $\lambda/\mu$ be any skew partition. Define a {\emph{semi-standard (skew) Young tableau}} (SSYT) {\emph{of shape}} $\lambda/\mu$ to be a filling of $\lambda/\mu$ with positive integers that are non-decreasing from left to right in each row and strictly increasing from top to bottom in each column. If a SSYT of shape $\lambda/\mu$ has $\alpha_i$ instances of $i$ for each positive integer $i$, we say that the {\emph{type}} of the SSYT is the composition $\alpha=({\nu_1},{\nu_2},\ldots)$. Then we have the following identity, in which the sum is over all partitions $\nu$, thought of in this context as compositions of non-increasing non-negative integers:

\vspace{.3cm}
\begin{equation}
    s_{\lambda/\mu} \; = \; \sum_{\nu} K_{\lambda/\mu,\nu}m_{\nu}, \qquad  K_{\lambda/\mu,\nu} \; = \;\#\{\text{SSYT of shape }\lambda/\mu \text{ and type }\nu\} \label{eq:ssyt}
\end{equation}
\vspace{.3cm}

\noindent The numbers $K_{\lambda/\mu,\nu}$ are called the (skew) {\emph{Kostka numbers}}. The example for $s_{\lambda_2/\mu_2}$ is spelled out in Figure \ref{fig:ssyt}. These numbers can become large quite fast: the coefficient in front of $m_{(1^9)}$ within $s_{\lambda_3/\mu_3}$ is equal to $744$, and in front of $m_{(1^72)}$ is equal to 323.\\

The relationship we have established between the integral pairings on the moduli spaces and these skew Schur functions will not be exploited in this paper, although some of the arguments below may have combinatorial interpretations.

\vspace{.65cm}

\section{Mod two nilpotency}\label{sec:nilp}

In this section we prove Theorem \ref{thm:nilp}. We first consider the degree 2 class $\alpha$ in the cohomology of $N_g$, and later handle the corresponding class $a_1$ in the cohomology of $M_g$. We begin by showing that $\alpha^g$ is zero mod 2. In fact, we have more generally the following:

\vspace{.45cm}

\begin{prop}\label{prop:2divalpha}
    For $n\geqslant g-1$, the element $\alpha^{n}$ is divisible by $2^{n-g+1}$.
\end{prop}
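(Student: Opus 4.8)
The plan is to deduce the divisibility from a $2$-adic estimate on cup‑product pairings. Since $N_g$ is closed and oriented and $H^\ast(N_g;\Z)$ is torsion‑free, the pairing $H^{2n}(N_g;\Z)\times H^{6g-6-2n}(N_g;\Z)\to\Z$ is unimodular; hence an integral class $x$ of degree $2n$ is divisible by $2^{n-g+1}$ provided $x\cdot y[N_g]$ is divisible by $2^{n-g+1}$ for every integral $y$ of complementary degree. (If all these pairings have $2$‑adic valuation $\geqslant n-g+1$, then so does every coordinate of $x$ with respect to an integral basis.) So it suffices to prove $v_2\bigl(\alpha^n y[N_g]\bigr)\geqslant n-g+1$ for all integral $y$; this is vacuous when $n<g-1$ and yields the proposition when $n\geqslant g-1$.

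Next I would cut down the set of test classes $y$. Restricting \eqref{eq:d2} along $\iota\colon N_g\hookrightarrow M_g$ and using $\iota^\ast b_1^j=0$ gives $\delta_2=\tfrac12\bigl((g-1)(g-2)\alpha^2+(2g-1)\cdot\tfrac14(\alpha^2-\beta)\bigr)$; since $2g-1$ is odd, $\tfrac14(\alpha^2-\beta)$ is a $\Z_{(2)}$‑polynomial in $\alpha$ and $\delta_2$. Combined with Proposition \ref{prop:intng}, this shows that $\Z_{(2)}\otimes H^\ast(N_g;\Z)$ is spanned by the integral monomials $\alpha^{a}\prod_j\psi_j^{\epsilon_j}\prod_i\delta_{g,i}^{c_i}$, so it is enough to bound $v_2\bigl(\alpha^n y[N_g]\bigr)$ for $y$ of this form; the general case follows by $\Z_{(2)}$‑linearity. (The genus $g=1$ case is trivial, as $N_1$ is a point.)

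For such a $y$ one first disposes of the $\psi$'s. If $S=\{j:\epsilon_j=1\}$ is not invariant under $j\mapsto j+g$, then, $\alpha^{n+a}\prod_i\delta_{g,i}^{c_i}$ being $\mathrm{Sp}(2g,\Z)$‑invariant, Thaddeus's sign argument (Section \ref{sec:intersection}) forces the pairing to vanish. Otherwise $S=K\cup(K+g)$ with $K\subset\{1,\dots,g\}$, $|K|=s$, and the genus recursion of Section \ref{sec:intersection} (each $\psi_\ell\psi_{\ell+g}$ Poincaré dual to an embedded $N_{g-1}$) gives
\[
  \alpha^{n+a}\Bigl(\prod_i\delta_{g,i}^{c_i}\Bigr)\Bigl(\prod_{\ell\in K}\psi_\ell\psi_{\ell+g}\Bigr)[N_g]\;=\;\pm\,\alpha^{n+a}\prod_i\bigl(\delta_{g,i}|_{N_{g-s}}\bigr)^{c_i}[N_{g-s}].
\]
Then I would rewrite everything in the $\xi_{g,j}$. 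From \eqref{eq:defxi} and $\alpha/2=-\xi_1$, each $\delta_{g,i}=\sum_{j=0}^{i}(-1)^{i-j}\binom{2g-1-j}{i-j}\xi_1^{\,i-j}\xi_{g,j}$ is a genuine $\Z$‑polynomial in $\xi_{g,0}=1,\xi_{g,1},\dots,\xi_{g,i}$, and $\alpha^{n+a}=(-2)^{n+a}\xi_1^{\,n+a}$; so, restricting to $N_{g-s}$, the right‑hand side above is $(-2)^{n+a}$ times a $\Z$‑linear combination of pairings $\xi_{g,\lambda_1}\cdots\xi_{g,\lambda_N}[N_{g-s}]$ with $|\lambda|=3(g-s)-3$. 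By equation \eqref{eq:ng} of Theorem \ref{thm:maincomputation}, each such pairing equals $2^{-(g-s-1)}$ times the coefficient of $m_\lambda T^{g-s-1}$ in $U(T)^{g-s}R(T)^{s}/Q(T)$, and this coefficient is an \emph{integer}, since that bracketed series has coefficients in $\Lambda$ in front of $T^i$ for $0\leqslant i\leqslant g-s-1$. Hence
\[
  v_2\bigl(\alpha^n y[N_g]\bigr)\;\geqslant\;(n+a)-(g-s-1)\;=\;(n-g+1)+(a+s)\;\geqslant\;n-g+1,
\]
which is exactly what was needed.

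The bulk of the work is the bookkeeping of the middle steps: checking that, $2$‑locally, all of Atiyah–Bott's integral generators can be traded for monomials in the $\xi_{g,j}$ (in particular that $\tfrac14(\alpha^2-\beta)$ is redundant over $\Z_{(2)}$), and carefully propagating the restriction maps $\delta_{g,i}\mapsto\delta_{g,i}|_{N_{g-s}}$ through the genus recursion. Here the ``Caution'' of Section \ref{sec:backg} matters, since $\delta_{g,i}|_{N_{g-s}}\neq\delta_{g-s,i}$; this is precisely why one must keep the genus‑$g$ classes and invoke the fact that $\xi_{g,j}|_{N_{g-s}}$ is exactly the class entering \eqref{eq:ng}. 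Once this is in place the valuation count above is immediate.
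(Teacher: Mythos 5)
Your argument is correct and follows essentially the same route as the paper's own proof: reduce via Poincar\'{e} duality and the mod-$2^m$ generators to bounding the $2$-adic valuation of pairings of $\alpha^n$ against monomials in $\alpha,\delta_{g,i},\psi_j$, dispose of the $\psi_j$ by Thaddeus's vanishing and genus recursion, convert the $\delta_{g,i}$ to integral polynomials in the $\xi_{g,j}$, and read off the valuation from equation (\ref{eq:ng}) of Theorem \ref{thm:maincomputation}. The only difference is presentational: you spell out the unimodularity and $\Z_{(2)}$-reduction steps that the paper leaves implicit.
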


\vspace{.4cm}

\begin{proof}
From Section \ref{sec:ng}, we gather that the residue classes of $\alpha,\delta_{i},\psi_j$ for $2\leqslant i \leqslant 2g-1$ and $1\leqslant j\leqslant 2g$ generate the mod $2^m$ cohomology ring of $N_g$ for any $m\geqslant 1$, and in particular $m=n-g+1$. It suffices then to show that for every partition $\lambda$, subset $J\subset\{1,\ldots,2g\}$, and $\ell\geqslant 0$ we have

\vspace{.3cm}

\[
    \alpha^{n+\ell}\delta_{\lambda_1}\cdots\delta_{\lambda_k}\prod_{j\in J}\psi_j [N_g] \; \equiv \; 0 \mod 2^{n-g+1}.
\]

\vspace{.3cm}

\noindent Now recall that $\alpha = -2\xi_1$, and from (\ref{eq:defxi}) that each $\delta_i$ is an integral combination of terms $\xi^b_1\xi_j$. The above pairing is then an integral combination of pairings of the form

\vspace{.3cm}

\begin{equation}
    2^{n+\ell}\xi_{\nu_1}\cdots\xi_{\nu_r}\prod_{j\in J}\psi_J [N_g]\label{eq:pring}
\end{equation}

\vspace{.3cm}

\noindent Now either $J$ is not invariant under the involution $j \mapsto j+g$ (mod 2g), in which case (\ref{eq:pring}) is zero, or else (\ref{eq:pring}) is the coefficient of $m_\nu$ within $2^{n+\ell}\cn(Z_g|_{N_k})$ where $k = g - |J|/2$. It is apparent from Theorem \ref{thm:maincomputation} that $2^n\cn(Z_g|_{N_k})$ has coefficients divisible by $2^{n-g+1}$, since the power series inside the brackets of (\ref{eq:ng}) has coefficients symmetric functions with integer coefficients.
\end{proof}

\vspace{.5cm}

\noindent The nilpotency degree of $\alpha$ mod 2 is then computed by the following, which implies that $\alpha^{g-1}$ is nonzero in the cohomology ring $H^\ast(N_g;\Z_2)$:\\

\vspace{.4cm}

\begin{lemma}\label{lemma:xi}
    The parity of the integer $2^{g-1}\xi_1^{3g-3-j}\xi_{j}[N_g]$ is determined as follows:
    
    \[
        2^{g-1}\xi_1^{3g-3-j}\xi_{j}[N_g] \; \equiv \; 1\;\; {\emph{\text{(mod 2)}}} \quad \Longleftrightarrow \quad \begin{cases} j\in \{g-1,\; g-2\} & g \text{ even}\\ j\in \{g,\;g-1\} & g \text{ odd} \end{cases}
    \]
\end{lemma}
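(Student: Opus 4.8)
\textbf{Proof plan for Lemma \ref{lemma:xi}.}
The natural approach is to extract the relevant parity from the generating-function formula in Theorem \ref{thm:maincomputation}, specifically the identity $\xi_1^{3g-3-j}\xi_j[N_g] = (3g-3-j)!\cdot 2^{1-g}\cdot\underset{x^jT^{g-1}}{\text{Coeff}}\big[(1+x-x^2T)^g/(x\cosh\sqrt{T} + \sinh\sqrt{T}/\sqrt{T})\big]$ obtained in Section \ref{sec:spec}. Multiplying by $2^{g-1}$ and working mod $2$, I would first reduce the analysis to a cleaner object by rescaling. Writing $S = \sqrt{T}$, the denominator is $x\cosh S + \sinh S / S$; after clearing $\sinh$ and $\cosh$ against each other it is natural to pass to the series $g(S) = S\cosh S$ and $h(S) = \sinh S$ so that the denominator is $(xg(S)+h(S))/S$, and then $2^{g-1}\xi_1^{3g-3-j}\xi_j[N_g]$ becomes, up to the factorial prefactor, the coefficient of $x^jT^{g-1}$ in $2^{g-1}S(1+x-x^2T)^g/(xg(S)+h(S))$.

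The main obstacle, and the real content of the lemma, is controlling the $2$-adic valuation of the factorial prefactor $(3g-3-j)!$ together with the $2^{g-1}$ and the (generally non-integral) coefficients coming from $1/Q$-type denominators, so that the product lands in $\Z$ and one can read off its parity. The key tool here is Kummer's / Legendre's formula for $v_2(n!)$, combined with the observation that $\sinh S/S = \sum S^{2n}/(2n+1)!$ and $\cosh S = \sum S^{2n}/(2n)!$ have denominators whose $2$-adic valuations are governed by the same binomial-coefficient arithmetic. Concretely, I would expand $1/(x g(S)+h(S)) = h(S)^{-1}\sum_{r\ge 0}(-1)^r x^r (g(S)/h(S))^r$, note that $g(S)/h(S) = S\coth S$ is an even power series with a well-understood $2$-adic structure (related to Bernoulli numbers via $S\coth S = \sum 2^{2n}B_{2n}S^{2n}/(2n)!$, where $v_2(2^{2n}B_{2n}/(2n)!)$ is computed from von Staudt–Clausen), and expand $h(S)^{-1} = S^{-1}\cdot(S/\sinh S)$, again a series with controlled denominators. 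Extracting the coefficient of $x^j$ then expresses the quantity as a finite sum over $r$ of products of these series, and the coefficient of $S^{2g-2}$ in each such product is a rational number whose $2$-adic valuation I can estimate term by term.

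Having set up these estimates, the plan is to show that after multiplying by $2^{g-1}(3g-3-j)!$ the only terms surviving mod $2$ come from a very restricted range of $r$ and from the ``leading'' pieces of the power series (those with $2$-adic valuation exactly matched by the prefactor), and that these surviving contributions are nonzero exactly in the four cases listed — $j\in\{g-1,g-2\}$ for $g$ even and $j\in\{g,g-1\}$ for $g$ odd. The case split on the parity of $g$ should emerge naturally from Legendre's formula: $v_2((3g-3-j)!) = 3g-3-j - s_2(3g-3-j)$ where $s_2$ is the binary digit sum, and the congruence class of $3g-3-j$ mod powers of $2$ (hence the interplay between the parities of $g$ and $j$) controls whether the prefactor exactly cancels the denominators or leaves an extra factor of $2$. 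I would organize the final step as: (i) prove divisibility, i.e. that $2^{g-1}\xi_1^{3g-3-j}\xi_j[N_g]$ is even outside the claimed set — this follows from a uniform valuation bound; and (ii) prove non-vanishing inside the claimed set by isolating the single surviving monomial and checking its coefficient is odd, e.g. by a direct small computation or by an induction on $g$ that tracks the leading coefficient. I expect step (ii), pinning down the exact parity rather than just a bound, to be where the delicate bookkeeping lives, and a clean way to do it may be to first prove the divisibility statement in general and then evaluate the residual ``boundary'' coefficient using the explicit low-order terms of $S/\sinh S$ and $S\coth S$ modulo $2$.
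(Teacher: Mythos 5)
Your starting point is valid --- the specialization $\overline{\text{ex}}$ from Section \ref{sec:spec} does express $2^{g-1}\xi_1^{3g-3-j}\xi_j[N_g]$ as $(3g-3-j)!$ times a coefficient of the rational power series you describe --- but the proposal stops exactly where the lemma begins. Everything after "the plan is to show" is a statement of what must be proved, not a proof: you never identify which values of $r$ and which monomials in the expansion of $h(S)^{-1}\sum_r(-1)^r x^r(g(S)/h(S))^r$ achieve the minimal $2$-adic valuation, never verify that their contributions sum to an odd number in the four claimed cases, and never establish the "uniform valuation bound" for divisibility outside them. This last point is not routine: the coefficients of $S/\sinh S$ are $-(2^{2n}-2)B_{2n}/(2n)!$, whose $2$-adic valuation is $s_2(n)-2n$ (deeply negative and irregular), so after multiplying by $(3g-3-j)!$ and summing over $r$ you must rule out both cancellation among minimal-valuation terms and the possibility that several such terms conspire to give an even sum. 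Term-by-term estimates give only a lower bound on the valuation of the sum; pinning down exact parity requires a complete enumeration of the extremal terms, and that enumeration --- which would encode the dependence on $j$ and on the parity of $g$ --- is precisely the content you have deferred.

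For comparison, the paper avoids $2$-adic valuations of rational numbers altogether by never passing to the specialization. It reads $2^{g-1}\xi_1^{3g-3-j}\xi_j[N_g]$ off as the coefficient of $m_{(j,1^{3g-3-j})}T^{g-1}$ in $U(T)^g/Q(T)$, an identity taking place in the integral ring $\Lambda$, and then reduces modulo the ideal $I$ generated by $2$ and all $m_\lambda$ with at least two parts greater than $1$. In the quotient, multiplication of monomial symmetric functions is governed by multinomial coefficients, Lucas's theorem decides their parity, and one computes $\text{Coeff}_{T^{g-1}}[U(T)^g/Q(T)]\equiv g\cdot m_{(g,1^{2g-3})}+m_{(g-1,1^{2g-2})}+(g-1)\cdot m_{(g-2,1^{2g-1})}$ modulo $I$, from which the case split on the parity of $g$ is immediate. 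If you want to salvage your route, the honest comparison is that the factorials you are fighting are exactly the multinomial coefficients the paper handles with Lucas's theorem, viewed after division by $(3g-3-j)!$; working in the $m_\lambda$ basis keeps them visible and integral, whereas the exponential specialization buries them in denominators.
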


\vspace{.4cm}

\begin{proof}
By Theorem \ref{thm:maincomp}, the term $2^{g-1}\xi_1^{3g-3-j}\xi_{j}[N_g]$ is equal to the coefficient of $m_{\lambda}T^{g-1}$ within $U(T)^g/Q(T)$ where $\lambda=(j,1^{3g-3-j})$. We use this to reformulate the claim of the lemma as follows. Let $I\subset\Lambda$ be the ideal generated by $2\in\Z$ and the monomial symmetric functions $m_{\lambda}$ with $\lambda$ having at least two parts greater than 1, i.e. $\lambda_1,\lambda_2>1$. Here, as before, $\Lambda$ is the ring of symmetric functions with integer coefficients. Then the lemma is equivalent to the congruence

\vspace{.0cm}

\begin{equation}
    \underset{T^{g-1}}{\text{Coeff}} \Big[\,U(T)^g/Q(T)\,\Big] \; \equiv \;\;g\cdot m_{(g,\, 1^{2g-3})} + m_{(g-1,\, 1^{2g-2})} + (g-1)\cdot m_{(g-2,\, 1^{2g-1})} \qquad (\text{mod  } I)\label{eq:coeff1}
\end{equation}

\vspace{.3cm} 

\noindent Let $J\subset I$ be the ideal generated only by $m_{\lambda}$ with $\lambda$ having at least two parts greater than 1, omitting $2\in \Z$. The following relations are easily verified in the quotient ring $\Lambda/J$, in which $r,s>1$:

\vspace{-.1cm}

\begin{align*}
    m_{(1^p)}m_{(1^q)} & \; \equiv \; {p+q\choose p}m_{(1^{p+q})}  \;\;\;\;+\; {p+q-2\choose p-1}m_{(2,\,1^{p+q-2})} & (\text{mod  } J)\\
    {}\\
     m_{(r,\,1^p)}m_{(1^q)} & \; \equiv \; {p+q\choose p}m_{(r,\,1^{p+q})}  \;+\; {p+q-1\choose p}m_{(r+1,\,1^{p+q-1})} & (\text{mod  } J)\\
     {}\\
      m_{(r,\,1^p)}m_{(s,\, 1^q)} & \; \equiv \; {p+q\choose p}m_{(r+s,\,1^{p+q})} & (\text{mod  } J)
\end{align*}

\vspace{.3cm} 

\noindent These follow by simply expanding the monomial symmetric functions as the sums of monomials that define them, and multiplying. Along the same lines, we leave the following to the reader:

\vspace{.3cm}

\begin{equation}
    m_{(1^p)}^n \; \equiv \; \sum_{k=1}^{n} {n\choose k}(p n-k)!p^kp !^{-n} m_{(k, \, 1^{p n-k})} \qquad\qquad ({\text{mod  }} J)\label{eq:lucas}
\end{equation}

\vspace{.3cm}

\noindent We list some cases for which (\ref{eq:lucas}) vanishes modulo $I$. First, a special case of an elementary result, often called Lucas's Theorem, says that a multinomial coefficient $(\alpha_1+\cdots + \alpha_k)!/\alpha_1!\cdots \alpha_n!$ is even if and only if, in the binary expansions of the $\alpha_i$, there is some position (i.e. digit location) for which two distinct $\alpha_i$ have digit equal to $1$. Next, $(pn-k)! p^k p!^{-n}$ is equal to the multinomial coefficient in which $\alpha_1=\cdots=\alpha_{k}=p-1$ and $\alpha_{k+1}=\cdots=\alpha_n=p$. If $n\geqslant 3$, then either $p$ or $p-1$ appears at least twice, so by Lucas's Theorem this number is even. Thus $m^n_{(1^p)} \equiv 0$ (mod $I$) if $n\geqslant 3$. If $n=2$, the $k=1$ term in (\ref{eq:lucas}) drops out because of $\binom{n}{k}$, so we need only consider $n=2=k$. This case has the term $\binom{2p-2}{p-1}$, which is even, unless $p \leq 2$. Thus $m^n_{(1^p)} \equiv 0$ (mod $I$) if $p\geqslant 3$ and $n=2$.\\

As in Section \ref{sec:skew}, define $r_n\in\Lambda$ to be  $e_1^{n+1}$ times the coefficient of $T^n$ of $1/Q(T)$. The general formula for the reciprocal of a power series applied to $1/Q(T)$ yields

\vspace{.3cm}

\[
    r_n \; = \; \sum_{\substack{\alpha_1,\ldots,\alpha_k \geqslant 0,\; k\geqslant 1\\ \alpha_1 + 2\alpha_2 +\cdots + k\alpha_k = n}}(-1)^{\alpha_1+\cdots+\alpha_k}{\alpha_1+\cdots+\alpha_k \choose \alpha_1,\ldots,\alpha_k}e_1^{n-\sum \alpha_i}e_3^{\alpha_1}\cdots e_{2k+1}^{\alpha_k}
\]

\vspace{.3cm}

\noindent Now, recalling that $e_p = m_{(1^p)}$ and taking our above remarks regarding $e_p^n$ into consideration, we see that only terms with all $\alpha_i \leqslant 2$ can contribute odd coefficients. Further, by Lucas's Theorem, if all $\alpha_i\leqslant2$, then the multinomial coefficient appearing is even unless  $\{\alpha_1,\ldots,\alpha_k\}$ has either (i) some $i$ such that $\alpha_i=1$ and $\alpha_j = 0$ for $j\neq i$, (ii) some $i$ such that $\alpha_i=2$ and $\alpha_j = 0$ for $j\neq i$, or (iii) some $i,j$ such that $\alpha_i=2$, $\alpha_j = 1$, and $\alpha_\ell=0$ for $\ell\neq i,j$. Only case (i) actually contributes something nonzero mod 2, since we remarked above that $m_{(1^p)}^2$ is even when $p\geqslant 3$. We conclude:

\vspace{.3cm}

\[
    r_n \; \equiv \; e_1^{n-1}e_{2n+1} \qquad ({\text{mod  }} I)
\]

\vspace{.3cm}

\noindent From the relations above we easily compute $e_1^{n-1}$ modulo $I$. It is congruent to $m_{(n-2,\, 1)} + m_{(n-1)}$ if $n$ is even, and simply $m_{(n-1)}$ if $n$ is odd. From this we obtain

\vspace{.3cm}

\begin{equation}
    r_n \; \equiv \;  m_{(n,\, 1^{2n})} + n\cdot m_{(n-1,\, 1^{2n+1})}  \qquad ({\text{mod  }} I)\label{eq:rn}
\end{equation}

\vspace{.3cm}

\noindent Note that $r_n$ can be replaced here with the skew Schur function $s_{\lambda_n/\mu_n}$, and for $n=2$ the congruence (\ref{eq:rn}) is apparent from Figure \ref{fig:ssyt}. Next, since $U(T) \equiv m_{(1)} - m_{(2,1)}T$ (mod $J$), the coefficient of $T^i$ within $U(T)^g$ is congruent to $(-1)^i{g\choose i}e_1^{g-i}m_{(2,1)}^i$ modulo $J$. We then gather the following:

\vspace{.3cm}

\begin{equation}
    \underset{T^{g-1}}{\text{Coeff}} \Big[\,U(T)^g/Q(T)\,\Big] \; = \; \sum_{i=0}^{g-1} \underset{T^{i}}{\text{Coeff}} \Big[\,U(T)^g\,\Big] e_1^{i-g}r_{g-i-1} \; \equiv \; \sum_{i=0}^{g-1} {g \choose i}  m_{(2,1)}^{i} r_{g-i-1}\qquad (\text{mod } I)\label{eq:longcoeff}
\end{equation}

\vspace{.3cm}

\noindent A quick check shows that $m_{(2,1)}^i\equiv 0$ (mod $I$) for $i\geqslant 2$, so the only terms contributing are at $i=0,1$. Thus the sum is congruent to $r_{g-1} + g\cdot m_{(2,\, 1)}r_{g-2}$, which with (\ref{eq:rn}) computes (\ref{eq:coeff1}).
\end{proof}

\vspace{.5cm}

\begin{corollary}\label{cor:1pairng}
    The pairing $\alpha^{g-1}\delta_{2g-2}[N_g]$ is odd.
\end{corollary}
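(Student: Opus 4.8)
The strategy is to rewrite $\alpha^{g-1}\delta_{2g-2}$ entirely in terms of the $\xi$-classes, so that the pairing becomes an integer combination of the pairings $\xi_1^{3g-3-j}\xi_j[N_g]$ whose parity is pinned down by Lemma~\ref{lemma:xi}, and then to observe that after reducing mod $2$ exactly one odd term survives.

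First I would apply the inversion formula in~(\ref{eq:defxi}) with $i=2g-2$. Since $\binom{2g-1-j}{2g-2-j}=2g-1-j$ and $\alpha/2=-\xi_1$, this reads
\[
\delta_{2g-2}\;=\;\sum_{j=0}^{2g-2}(2g-1-j)(-1)^{j}\,\xi_1^{\,2g-2-j}\xi_j .
\]
Multiplying through by $\alpha^{g-1}=(-2)^{g-1}\xi_1^{\,g-1}$ and evaluating against $[N_g]$ gives
\[
\alpha^{g-1}\delta_{2g-2}[N_g]\;=\;(-1)^{g-1}\sum_{j=0}^{2g-2}(2g-1-j)(-1)^{j}\Big(2^{g-1}\xi_1^{\,3g-3-j}\xi_j[N_g]\Big),
\]
where by Lemma~\ref{lemma:xi} each bracketed quantity is an integer whose parity is known explicitly.

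Reducing modulo $2$ kills the signs $(-1)^{g-1}$ and $(-1)^{j}$ and replaces $2g-1-j$ by $1+j$, so that
\[
\alpha^{g-1}\delta_{2g-2}[N_g]\;\equiv\;\sum_{j}(1+j)\Big(2^{g-1}\xi_1^{\,3g-3-j}\xi_j[N_g]\Big)\pmod 2 .
\]
By Lemma~\ref{lemma:xi} the bracket is odd precisely for $j\in\{g-1,g-2\}$ when $g$ is even and for $j\in\{g,g-1\}$ when $g$ is odd. In the even case the weights $1+j$ at these two values of $j$ are $g$ and $g-1$, i.e.\ even and odd respectively; in the odd case they are $g+1$ and $g$, again even and odd. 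Hence exactly one summand contributes mod $2$, and $\alpha^{g-1}\delta_{2g-2}[N_g]$ is odd, as claimed.

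I do not anticipate any genuine obstacle: the only points requiring care are the sign bookkeeping in the expansion and the fact that Lemma~\ref{lemma:xi} controls the parity of the integer $2^{g-1}\xi_1^{\,3g-3-j}\xi_j[N_g]$ rather than of the rational number $\xi_1^{\,3g-3-j}\xi_j[N_g]$ itself --- which is exactly the quantity appearing in the display once the factor $(-2)^{g-1}$ has been split off, so the two match up cleanly.
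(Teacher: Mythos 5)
Your proposal is correct and follows essentially the same route as the paper: expand $\delta_{2g-2}$ via the inversion formula (\ref{eq:defxi}), absorb $\alpha^{g-1}=(-2)^{g-1}\xi_1^{g-1}$ to produce the integers $2^{g-1}\xi_1^{3g-3-j}\xi_j[N_g]$ controlled by Lemma \ref{lemma:xi}, and check that the coefficient $2g-1-j\equiv 1+j$ is odd for exactly one of the two indices where that lemma gives an odd pairing. The paper's proof is just a reindexed version of your computation ($i=2g-1-j$), so there is nothing to add.
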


\vspace{.54cm}

\begin{proof}
Using formula (\ref{eq:defxi}) and $\alpha=-2\xi_1$, we extract the relation

\vspace{.3cm}

\[
    \alpha^{g-1}\delta_{2g-2} \; = \; \sum_{i=1}^{2g-1}(-1)^{i+g} i \cdot 2^{g-1}\xi_1^{g-2+i}\xi_{2g-1-i}
\]

\vspace{.3cm}

\noindent Noting the coefficient $i$, we see that exactly one of the terms from Lemma \ref{lemma:xi} contributes an odd number once we pair with $[N_g]$.
\end{proof}

\vspace{.64cm} 

\noindent This corollary, together with Prop. \ref{prop:2divalpha}, proves that the nilpotency degree of $\alpha$ as viewed in $H^\ast(N_g;\Z_2)$ is equal to $g$. For the second part of Theorem \ref{thm:nilp}, regarding the nilpotency degree of $a_1$ in the ring $H^\ast(M_g;\Z_2)$, we establish an analogue of Corollary \ref{cor:1pairng}. First:\\

\vspace{.5cm}

\begin{lemma}
    The integer $2^{2g-1}z_1^{j}z_{4g-3-j}[M_g]$ is odd if and only if $j\in \{2g-1,2g-2\}$.
\end{lemma}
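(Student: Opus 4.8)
The plan is to mimic the proof of Lemma \ref{lemma:xi}, now using the $M_g$ version of the main computation, formula (\ref{eq:mg}) with $g=k$, together with the relevant specialization. By Theorem \ref{thm:maincomputation}, the integer $2^{2g-1}z_1^{j}z_{4g-3-j}[M_g]$, up to the explicit unit sign $(-1)^g$, is the coefficient of $m_{\lambda}T^{g-1}$ in $P(T)^g/Q(T)$, where $\lambda = (4g-3-j,\,1^{j})$. So just as before it suffices to work modulo the ideal $I\subset\Lambda$ generated by $2$ and all $m_\lambda$ with $\lambda_1,\lambda_2>1$, and to show that the only monomials of the form $m_{(r,1^{j})}$ with $r>1$ that appear with odd coefficient in $\underset{T^{g-1}}{\text{Coeff}}[P(T)^g/Q(T)]$ are those with $j\in\{2g-1,2g-2\}$. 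Recall from Section \ref{sec:backg} that the $z_i$ are legitimate generators for $H^\ast(M_g;\Z_m)$ only when $m$ is odd; but that is not needed here, since we are directly computing an integer pairing and asking for its parity.

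The key computational input is the reduction of $P(T)$ modulo the ideal $J$ (the one without $2$), analogous to $U(T)\equiv m_{(1)}-m_{(2,1)}T$ used in Lemma \ref{lemma:xi}. From the definition $P(T)=\sum_{i\geqslant 0}(-1)^i\bigl(2m_{(2^i1^2)}+(i+1)^2 m_{(2^{i+1})}\bigr)T^i$: every $m_{(2^i1^2)}$ with $i\geqslant 1$ lies in $J$ (two parts exceed $1$), and $m_{(2^{i+1})}\in J$ for $i\geqslant 1$ as well, so modulo $J$ only the $i=0$ term $2m_{(1^2)}+m_{(2)}$ and, modulo $2$, only $m_{(2)}$ survives from the coefficient functions; however the $2m_{(2^i1^2)}$ terms are in $2\Lambda\subset I$ but we only have $J$ here, so one must be a little careful. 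More precisely, modulo $J$ we get $P(T)\equiv 2m_{(1^2)}+m_{(2)} = 2e_1^2 - e_1^2 + \cdots$; it is cleaner to note $m_{(1^2)}=\tfrac12(e_1^2-e_2)$ so $2m_{(1^2)}+m_{(2)} = e_1^2 - e_2 + e_2 = e_1^2$, giving $P(T)\equiv e_1^2 \pmod J$. (This is consistent with the remark after Theorem \ref{thm:maincomputation} that the constant coefficient of $P$ is $e_1^2$.) Thus $P(T)^g\equiv e_1^{2g}\pmod J$, and the $T^{g-1}$ coefficient of $P(T)^g/Q(T)$ is congruent modulo $J$ to $e_1^{2g}\cdot e_1^{-g}r_{g-1} = e_1^{g}r_{g-1}$, using $r_{g-1}\equiv e_1^{g}\cdot(\text{coeff of }T^{g-1}\text{ in }1/Q)$ as in Section \ref{sec:skew}. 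Wait — more carefully, the $T^{g-1}$ coefficient of $1/Q(T)$ is $e_1^{-g}r_{g-1}$, so the product gives $e_1^{2g}\cdot e_1^{-g}r_{g-1}=e_1^{g}r_{g-1}$. So the whole quantity reduces, modulo $J$, to $e_1^{g}r_{g-1}$, and then we pass to modulo $I$ using the already-established $r_{g-1}\equiv m_{(g-1,1^{2g-2})}+(g-1)m_{(g-2,1^{2g-1})}\pmod I$ from (\ref{eq:rn}), together with the computation of $e_1^{g}$ modulo $I$ via the multiplication relations in $\Lambda/J$ (the three displayed congruences in the proof of Lemma \ref{lemma:xi}, plus (\ref{eq:lucas}) and the Lucas-theorem analysis of which $m_{(1^p)}^n$ vanish mod $I$).

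So the remaining work is: compute $e_1^{g}\bmod I$ (this is $m_{(g)}$ for $g$ odd and $m_{(g-2,1)}+m_{(g-1)}$ for $g$ even, by the same argument that gave $e_1^{n-1}\bmod I$ in Lemma \ref{lemma:xi}, shifted), then multiply by the two-term expression for $r_{g-1}\bmod I$ using the rule $m_{(r,1^p)}m_{(s,1^q)}\equiv\binom{p+q}{p}m_{(r+s,1^{p+q})}$ and $m_{(r,1^p)}m_{(1^q)}\equiv \binom{p+q}{p}m_{(r,1^{p+q})}+\binom{p+q-1}{p}m_{(r+1,1^{p+q-1})}$, and read off which $m_{(\text{something},1^{2g-1})}$, $m_{(\text{something},1^{2g-2})}$ terms have odd coefficients. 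Since the total degree is $3(g-1)+g-? $ — actually the target partitions have exactly $4g-3$ boxes (the degree of $\cn(Z_g|_{M_g})$ is $\dim M_g/2 = 4g-3$), so $m_{(r,1^j)}$ with $r+j=4g-3$; the claim $j\in\{2g-1,2g-2\}$ corresponds to $r\in\{2g-2,2g-1\}$. I expect, by analogy with the $N_g$ case where the answer shifted by exactly the extra $e_1^g$ relative to the $e_1^{g-1}$ that appeared for $r_{g-1}$ inside Lemma \ref{lemma:xi}, that the arithmetic here is essentially a bookkeeping exercise: the binomial coefficients that arise are again controlled by Lucas's theorem, and only two survive. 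The main obstacle will be checking the parity of those binomial coefficients carefully in the two cases $g$ even and $g$ odd, and making sure no extra odd term sneaks in from the $(g-1)m_{(g-2,1^{2g-1})}$ piece of $r_{g-1}$ when $g$ is even (where $g-1$ is odd); I anticipate the $g$-even analysis is the delicate one, exactly as it was the boundary case in the proof of Lemma \ref{lemma:xi}.
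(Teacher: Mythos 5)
Your strategy is the paper's own: by (\ref{eq:mg}) with $k=g$ the integer in question is, up to the sign $(-1)^g$, the coefficient of $m_\lambda T^{g-1}$ in $P(T)^g/Q(T)$ with $\lambda=(4g-3-j,1^j)$; one then argues that modulo the ideal $I$ only the constant term $e_1^{2g}$ of $P(T)^g$ contributes, so the relevant coefficient reduces to $e_1^{g}r_{g-1}$ modulo $I$, and one finishes with (\ref{eq:rn}). Two details in your write-up are off, though. First, $m_{(2^i1^2)}$ does \emph{not} lie in $J$ when $i=1$: the partition $(2,1,1)$ has only one part exceeding $1$. The correct reduction is $P(T)\equiv e_1^2-2m_{(2,1,1)}T \pmod J$; the $i=1$ term survives modulo $J$ and is killed only by its factor of $2$ once you pass to $I$. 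That factor of $2$ (not membership of $m_{(2,1,1)}$ in $J$) is what justifies discarding all positive powers of $T$ in $P(T)^g$, so your conclusion is right but the reason you first give for it is not, and your "more precise" statement $P(T)\equiv e_1^2\pmod J$ is still false as a congruence mod $J$.

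Second, the value you record for $e_1^{g}$ modulo $I$ is wrong in both parity and degree. From the $p=1$ case of (\ref{eq:lucas}) one gets $e_1^{n}\equiv m_{(n)}+n\,m_{(n-1,1)}\pmod I$, hence $e_1^{g}\equiv m_{(g)}$ for $g$ \emph{even} and $e_1^{g}\equiv m_{(g)}+m_{(g-1,1)}$ for $g$ \emph{odd}; the expression $m_{(g-2,1)}+m_{(g-1)}$ you wrote is homogeneous of degree $g-1$, so it cannot equal $e_1^{g}$, and you have also attached the one-term and two-term answers to the wrong parities. With the corrected formula the deferred bookkeeping does close up uniformly: for $g$ even, $r_{g-1}\equiv m_{(g-1,1^{2g-2})}+m_{(g-2,1^{2g-1})}$ and multiplying by $m_{(g)}$ gives $m_{(2g-1,1^{2g-2})}+m_{(2g-2,1^{2g-1})}$; for $g$ odd, $r_{g-1}\equiv m_{(g-1,1^{2g-2})}$ and the two terms of $e_1^{g}$ contribute $m_{(2g-1,1^{2g-2})}$ and $\binom{2g-1}{1}m_{(2g-2,1^{2g-1})}\equiv m_{(2g-2,1^{2g-1})}$. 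Either way one lands on $m_{(2g-1,1^{2g-2})}+m_{(2g-2,1^{2g-1})}$ modulo $I$, i.e.\ $j\in\{2g-2,2g-1\}$. This last computation is short, but as written your proposal stops at "I expect," and the one intermediate formula you do supply for $e_1^{g}$ would derail it; you should carry out the two parity cases explicitly.
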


\vspace{.54cm}

\begin{proof}
We sketch the proof, which is similar to that of Lemma \ref{lemma:xi}. By (\ref{eq:mg}) of Theorem \ref{thm:maincomputation}, this integer is the coefficient of $m_{\lambda}T^{g-1}$ within $P(T)^g/Q(T)$ where $\lambda = (4g-3-j, 1^j)$. Since $P(T)$ is congruent modulo $J$ to $e_1^2 - 2m_{(2,1,1)}T$, the only term in $P(T)^g$ relevant to $\Lambda/I$ is the constant term $e_1^{2g}$. Then the coefficient of $T^{g-1}$ within $P(T)^g/Q(T)$, computed just as in (\ref{eq:longcoeff}), is congruent mod $I$ to $e_1^{g}r_{g-1}$. From (\ref{eq:rn}) this is then $m_{(2g-1, 1^{2g-2})} + m_{(2g-2,1^{2g-1})}$ mod $I$, proving the claim.
\end{proof}

\vspace{.5cm}

\noindent The same argument as in the proof of Corollary \ref{cor:1pairng} then yields the following:

\vspace{.75cm}

\begin{corollary}\label{cor:1pairmg}
    The pairing $a_1^{2g-1}d_{2g-2}[M_g]$ is odd.
\end{corollary}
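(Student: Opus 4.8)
The plan is to mimic, essentially verbatim, the proof of Corollary~\ref{cor:1pairng}, replacing everything that happens on $N_g$ by its counterpart on $M_g$. The input is the lemma just proved: $2^{2g-1}z_1^{j}z_{4g-3-j}[M_g]$ is odd exactly when $j\in\{2g-1,2g-2\}$. First I would recall the relation between the $d_i$ and the $z_i$, which is the $M_g$-analogue of (\ref{eq:defxi}): it is obtained from (\ref{eq:linebundle}) with $W=f_!V_g$, $L=\det(V_g|_{M_g})$, $r=2g-1$, $c_1(L)=a_1$ and $n=-1/2$, together with $z_1=-a_1/2$ (so $a_1=-2z_1$). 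Concretely this gives
\[
    d_{2g-2} \; = \; \sum_{j=0}^{2g-2}{2g-1-j\choose 2g-2-j}\left(\tfrac{a_1}{2}\right)^{2g-2-j} z_j \; = \; \sum_{i=1}^{2g-1}(\pm 1)\, i\cdot 2^{-(i-1)}\,a_1^{\,i-1} z_{2g-1-i},
\]
after re-indexing by $i=2g-1-j$, exactly as in the display in the proof of Corollary~\ref{cor:1pairng}; the key point is that the coefficient of the $z_{2g-1-i}$ term is (up to sign) the binomial coefficient ${2g-1-j\choose 1}=2g-1-j=i$, and the remaining constant is a power of $\tfrac12$.

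Next I would multiply through by $a_1^{2g-1}$ and pair with $[M_g]$. Using $a_1=-2z_1$ to convert each monomial $a_1^{2g-1}\cdot a_1^{\,i-1}z_{2g-1-i}=a_1^{2g-2+i}z_{2g-1-i}$ into $(-2)^{2g-2+i}z_1^{2g-2+i}z_{2g-1-i}$, the powers of $2$ combine so that, up to an overall odd sign, $a_1^{2g-1}d_{2g-2}[M_g]$ becomes an integral combination
\[
    a_1^{2g-1}d_{2g-2}[M_g] \; = \; \sum_{i=1}^{2g-1}(\pm 1)\, i\cdot 2^{2g-1}\, z_1^{\,2g-2+i}\, z_{2g-1-i}[M_g],
\]
where I have tracked that $2^{2g-1}z_1^{m}z_{4g-3-m}[M_g]$ is precisely the quantity controlled by the lemma with $m=2g-2+i$, since $m+(2g-1-i)=4g-3$. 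By the lemma, the term $2^{2g-1}z_1^{m}z_{4g-3-m}[M_g]$ is odd exactly for $m\in\{2g-1,2g-2\}$, i.e.\ for $i=1$ and $i=0$; but $i$ runs over $1\le i\le 2g-1$, so only $i=1$ survives (the $m=2g-2$ case, $i=0$, is not present in the sum). That single surviving term carries the coefficient $i=1$, which is odd, so it contributes an odd number, and every other term is even. Hence the total pairing is odd.

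The only genuine bookkeeping obstacle is making sure the powers of $2$ and the binomial coefficients line up so that exactly one odd term survives with an odd coefficient — in particular that the potentially-troublesome $m=2g-2$ value of the lemma is simply out of range (it would correspond to $i=0$, which does not occur because $d_1$ contributes the $z_{2g-2}$ term only through $j=2g-3$... one should double-check the indexing here, but it is the same phenomenon as the ``coefficient $i$'' observation in Corollary~\ref{cor:1pairng}). Apart from that, the argument is purely formal: linearity of the pairing, the line-bundle twist formula (\ref{eq:linebundle}), and the parity lemma. So the write-up is essentially: \emph{``The same argument as in the proof of Corollary~\ref{cor:1pairng}, using the $M_g$-analogue of (\ref{eq:defxi}) and the preceding lemma in place of Lemma~\ref{lemma:xi}, shows that exactly one term $-$ the one with coefficient $i=1$ $-$ pairs to an odd integer, so $a_1^{2g-1}d_{2g-2}[M_g]$ is odd.''} This, combined with the earlier divisibility statement for $a_1$ on $M_g$ (the $M_g$-analogue of Proposition~\ref{prop:2divalpha}, which the paper indicates it also proves), then pins down the nilpotency degree of $a_1$ as $2g$.
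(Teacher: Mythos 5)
Your argument is correct and is essentially the paper's own: the paper's proof of this corollary is literally the single sentence that the same argument as in Corollary~\ref{cor:1pairng} applies, and your write-up supplies those details — expanding $d_{2g-2}$ via the $M_g$-analogue of (\ref{eq:defxi}), tracking that the powers of $2$ assemble into the $2^{2g-1}$ of the preceding lemma, and noting that only one term pairs oddly. The only (correct) cosmetic difference you observe is that here one of the two potentially-odd indices of the lemma ($m=2g-2$, i.e.\ $i=0$) falls outside the summation range $1\leqslant i\leqslant 2g-1$, whereas in the $N_g$ case both indices occur and the parity of the coefficient $i$ does the work.
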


\vspace{.86cm}

\noindent To complete the proof of Theorem \ref{thm:nilp} it remains to show that $a_1^{2g}$ is zero mod 2. For this we will prove an analogue of Proposition \ref{prop:2divalpha}. To this end, we first establish a few lemmas which take the content of Section \ref{sec:intersection} a bit further.\\

We begin by sketching the geometric meaning of Thaddeus's genus recursive formula (\ref{eq:psi}). Recall that $N_g$ may be viewed as the space of conjugacy classes of $2g$-tuples $(A_{i})_{i=1}^{2g}$ in the $2g$-fold product of $SU(2)$ such that the product of the commutators $[A_i,A_{i+g}]$ for $1\leqslant i\leqslant g$ is equal to $-1$. For $I\subset\{1,\ldots,2g\}$, let the submanifold $N_I \subset N_g$ consist of conjugacy classes such that $A_i = 1$ if $i\in I$. If $I=I+g$, then $N_I$ can be identified with $N_{g-k}$ in which $k = |I|/2$. Then Thaddeus shows:

\vspace{.3cm}
\[
    \pm \prod_{j\in I} \psi_{j} \; = \; \text{P.D.} [N_I] \, \in \, H^{6g-6-3|I|}(N_g;\Z).
\]
\vspace{.3cm}

\noindent This immediately establishes (\ref{eq:psi}), up to signs. We now turn back to the moduli space $M_g$, which has the same description as does $N_g$ but with $U(2)$ replacing $SU(2)$. For $I,K\subset\{1,\ldots,2g\}$, embedded in $N_g\times J_g$ is the submanifold $N_I\times J_K$, where $N_I$ is as before, and $J_K$ consists of $2g$-tuples $(z_1,\ldots,z_{2g})$ in the $2g$-fold product of $U(1)$ such that $z_k=1$ if $k\in K$. We write $M_{IK}$ for the submanifold of $M_g$ given by the projection of $N_I\times J_K$ under the covering map $p$. It is clear that the homology class of $N_I\times J_K$ inside $N_g\times J_g$ is Poincar\'{e} dual to $\pm \prod_{i\in I}\psi_i\otimes \prod_{k\in K}\theta_k$. We compute:\\

\vspace{.4cm}

\begin{lemma} The class $b_2^i-a_1b_1^i/2$ is integral, and thus so too is $a_1b_1^i/2$. More specifically:

\vspace{.2cm}

\begin{equation}
    \pm {\text{\emph{P.D.}}}\left(\prod_{i\in I} (b_2^i-a_1b_1^i/2) \prod_{k\in K} b_1^k\right) \; = \;  [M_{IK}] \; \in \; H_{8g-6-3|I|-|K|}(M_g;\Z).\label{eq:pdm}
\end{equation}
\end{lemma}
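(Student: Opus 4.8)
The plan is to relate the Poincaré duality statement on $M_g$ to the already-established statement on $N_g$ via the covering map $p\colon N_g\times J_g\to M_g$ from \eqref{eq:cov}, using the formulas for $p^\ast$ from Proposition \ref{prop:pmap}. First I would recall that $p$ restricts to a covering of $M_{IK}$ by $N_I\times J_K$ (with the same sheet count $4^g$, or rather a divisor of it once we restrict, but this bookkeeping washes out in the duality statement), and that the homology class $[N_I\times J_K]$ inside $N_g\times J_g$ is Poincaré dual to $\pm\prod_{i\in I}\psi_i\otimes\prod_{k\in K}\theta_k$, as noted just above the statement. The integrality of $b_2^i - a_1 b_1^i/2$ then follows once we show its image under $p^\ast$ lands in the image of integral cohomology, since $p^\ast$ is injective on torsion-free cohomology (it is a rational isomorphism and everything in sight is torsion-free by Atiyah–Bott); indeed by Proposition \ref{prop:pmap} we compute $p^\ast(b_2^i - a_1 b_1^i/2) = \psi_i\otimes 1 + p^\ast(a_1)(1\otimes\theta_i) - p^\ast(a_1)(1\otimes\theta_i) = \psi_i\otimes 1$, which is manifestly integral. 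That $a_1 b_1^i/2 = b_2^i - (b_2^i - a_1 b_1^i/2)$ is then integral is immediate.

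The core of the argument is the identity \eqref{eq:pdm}. I would compute the pullback under $p^\ast$ of the class $\prod_{i\in I}(b_2^i - a_1 b_1^i/2)\prod_{k\in K}b_1^k$. By the computation just mentioned, $p^\ast(b_2^i - a_1 b_1^i/2) = \psi_i\otimes 1$, while $p^\ast(b_1^k) = 1\otimes 2\theta_k$. Hence
\[
    p^\ast\!\left(\prod_{i\in I}(b_2^i - a_1 b_1^i/2)\prod_{k\in K}b_1^k\right) \;=\; 2^{|K|}\Big(\pm\prod_{i\in I}\psi_i\otimes\prod_{k\in K}\theta_k\Big),
\]
up to the sign coming from reordering K\"unneth factors (which is irrelevant). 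The right-hand side is, up to sign, $2^{|K|}$ times the Poincaré dual in $N_g\times J_g$ of $[N_I\times J_K]$. Now I invoke the transfer/pushforward compatibility for the covering $p$: for a submanifold $W\subset M_g$ that is evenly covered, with preimage $\widetilde W\subset N_g\times J_g$ a covering of degree $d_W$, one has $p^\ast(\mathrm{P.D.}[W]) = \mathrm{P.D.}[\widetilde W]$ (pullback of the Thom class of the normal bundle is the Thom class of the pulled-back normal bundle). Applying this with $W = M_{IK}$ and $\widetilde W = N_I\times J_K$, and matching the computed pullback against $\mathrm{P.D.}[\widetilde W]$, I can solve for $\mathrm{P.D.}[M_{IK}]$ in terms of the class $\prod_{i\in I}(b_2^i - a_1 b_1^i/2)\prod_{k\in K}b_1^k$, with the powers of $2$ accounted for by comparing how $p^\ast$ acts and how the covering degrees of $M_g$ versus $M_{IK}$ compare. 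The arithmetic of these factors of $2$ — tracking that $p$ is $4^g:1$ globally while the restriction to $N_I\times J_K$ covering $M_{IK}$ has degree $4^{g-|K|/2}$ or similar, and reconciling this with the $2^{|K|}$ from $p^\ast(b_1^k)$ — is the step I expect to require the most care.

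The main obstacle, then, is not conceptual but bookkeeping: pinning down the precise power of $2$ and the precise sign so that \eqref{eq:pdm} holds on the nose with integral coefficients, rather than merely rationally. One clean way to sidestep the sheet-counting subtlety is to argue degree by degree: evaluate both sides of \eqref{eq:pdm} against an arbitrary integral cohomology class $x$ of complementary degree and use the pairing formula \eqref{eq:cup}, namely $x[M_g] = 4^{-g}(p^\ast x \slash \Omega_g)[N_g]$, together with the corresponding identity for the submanifolds, reducing everything to the already-known Poincaré duality $\pm\prod_{i\in I}\psi_i = \mathrm{P.D.}[N_I]$ in $N_g$. Carrying out this comparison for all $x$ forces the constant on the left of \eqref{eq:pdm} to be exactly $\pm 1$, which finishes the proof.
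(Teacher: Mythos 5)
Your overall route --- pulling everything back along the covering $p$ and using injectivity of $p^\ast$ on the torsion-free cohomology of $M_g$ --- is the cohomological dual of what the paper does, and it can be made to work; but as written it has two genuine problems. First, the integrality argument is incorrect: from $p^\ast(b_2^i-a_1b_1^i/2)=\psi_i\otimes 1$ being integral you cannot conclude that $b_2^i-a_1b_1^i/2$ is integral, because $p^\ast$ is injective but not split injective on integral cohomology. Your own computation furnishes the counterexample: $p^\ast(b_1^i/2)=1\otimes\theta_i$ is integral while $b_1^i/2$ is not (indeed $p^\ast$ on $H^1$ is multiplication by $2$ on a rank-$2g$ lattice). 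The integrality of $b_2^i-a_1b_1^i/2$ has to come from the duality identity itself --- once the class is exhibited as the Poincar\'e dual of the integral homology class $[M_{IK}]$, integrality follows --- not from looking at the pullback.

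Second, the ``bookkeeping'' you defer is not a detail that washes out; it is the crux, and the place where your argument as stated is off by a factor of $2^{|K|}$. The identity $p^\ast(\mathrm{P.D.}[W])=\mathrm{P.D.}[\widetilde W]$ requires $\widetilde W$ to be the \emph{full} preimage $p^{-1}(W)$, and $p^{-1}(M_{IK})$ is not $N_I\times J_K$: it is the disjoint union of $2^{|K|}$ deck-translates of $N_I\times J_K$ (each translate covers $M_{IK}$ with $2^{2g-|K|}$ sheets, and $2^{|K|}\cdot 2^{2g-|K|}=4^g$). Since the deck group acts trivially on the torsion-free homology, $\mathrm{P.D.}\bigl[p^{-1}(M_{IK})\bigr]=2^{|K|}\,\mathrm{P.D.}[N_I\times J_K]$, and this factor of $2^{|K|}$ exactly cancels against the $2^{|K|}$ you extracted from $p^\ast\bigl(\prod_{k\in K}b_1^k\bigr)$; only then does injectivity of $p^\ast$ give (\ref{eq:pdm}) with constant $\pm 1$. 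The paper sidesteps this component count by going in the other direction: it applies the projection formula $x\cap p_\ast(y)=p_\ast(p^\ast(x)\cap y)$ with $y=[N_g\times J_g]$, so that only $p_\ast[N_g\times J_g]=2^{2g}[M_g]$ and $p_\ast[N_I\times J_K]=2^{2g-|K|}[M_{IK}]$ are needed, and the powers of $2$ cancel in one line. Your fallback of pairing both sides of (\ref{eq:pdm}) against all integral classes via (\ref{eq:cup}) is also a valid way to pin down the constant, but it needs to be actually carried out with the correct sheet count $2^{2g-|K|}$ for $N_I\times J_K\to M_{IK}$, which is precisely the number you left unspecified.
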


\vspace{.45cm}

\begin{proof}
    As the statement suggests, we will ignore signs throughout. Set
    $x=\prod_{i\in I} (b_2^j-a_1b_1^j/2) \prod_{k\in K} b_1^k$, so that the above discussion implies $\text{P.D.}\left( p^\ast(x)\right) = 2^{|K|}\cdot [N_I\times J_K]$. Recall that the Poincar\'{e} dual of a cohomology class is equal to the cap product with the fundamental homology class. Also recall that the cap product satisfies the functoriality property $x\cap p_\ast(y)  = p_\ast(p^\ast(x)\cap y)$ for a homology class $y$ and a cohomology class $x$. Then we compute that $\text{P.D.}(x)$ is equal to
    
    \vspace{.2cm}
    \[
        x \, \cap \, [M_g] \; = \; x \, \cap \, 2^{-2g}p_\ast[N_g\times J_g]\; = \; 2^{-2g}p_\ast\left(p^\ast\left(x\right) \, \cap \, [N_g\times J_g]\right) \; = \; 2^{|K|-2g}p_\ast [N_I\times J_K].
    \]
    \vspace{.2cm}
    
    \noindent The final expression obtained on the right hand side is equal to $[M_{IK}]$, because $N_I\times J_K$ is clearly a $2^{2g-|K|}$ sheeted covering of $M_{IK}$.
\end{proof}

\vspace{.65cm}

\vspace{.4cm}

\noindent Note the submanifold $M_{IK}$ may be described as the subspace of $M_g$ consisting of conjugacy classes of tuples $(A_i)_{i=1}^{2g}$ of matrices in $U(2)$ whose product of commutators is $-1$, and such that $A_i\in SU(2)$ if $i\in K$ while $A_i$ is in the center of $U(2)$ if $i\in I$. We next establish:\\

\vspace{.4cm}

\begin{lemma}\label{lemma:a1rec}
If $a_1^m\in H^\ast(M_{g-1})$ is divisible by $d\in \Z$, so too is $(b_2^g-a_1b_1^g/2) b_1^{2g}a_1^m\in H^\ast(M_g)$.
\end{lemma}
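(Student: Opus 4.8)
The plan is to deduce this from the product-cutting behavior of the classes $\psi_i$ / $b_i^j$ under the genus-recursion embeddings, exactly as encoded in the Poincar\'e duality statement (\ref{eq:pdm}) of the previous lemma. First I would single out the $g^{\text{th}}$ handle of $\Sigma_g$ and take $I=\{g\}$, $K=\{2g\}$ in (\ref{eq:pdm}), so that $(b_2^g-a_1b_1^g/2)\,b_1^{2g}$ is Poincar\'e dual to the submanifold $M_{IK}\subset M_g$. The point is that $M_{IK}$ -- the locus of conjugacy classes of tuples $(A_i)$ with $A_g$ central and $A_{2g}\in SU(2)$ -- is precisely (diffeomorphic to) the image of the genus-recursion embedding $i\colon M_{g-1}\hookrightarrow M_g$ obtained by collapsing the $g^{\text{th}}$ handle, since forcing $A_g$ central and $A_{2g}$ special-unitary makes the commutator $[A_g,A_{2g}]$ trivial and reduces the defining product-of-commutators equation to the genus $g-1$ one. (This is the same submanifold underlying the recursion (\ref{eq:mrec}); indeed $b_1^gb_1^{2g}b_2^gb_2^{2g}$ cuts down to $M_{g-1}$, and the present lemma is a ``half'' of that.)

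Next I would convert the statement about divisibility of $a_1^m$ into the divisibility claim for the top pairing. Since the cohomology of $M_g$ is torsion-free (Atiyah--Bott \cite{ab}), an element $y\in H^\ast(M_g;\Z)$ vanishes, or is divisible by $d$, if and only if all of its integral cup-product pairings against a generating set are zero, or divisible by $d$. So it suffices to show that for every monomial $w$ in the Atiyah--Bott generators $a_1,a_2,b_1^j,b_2^j,d_i$ of complementary degree, the integer
\[
   w\cdot (b_2^g-a_1b_1^g/2)\,b_1^{2g}\,a_1^m\,[M_g]
\]
is divisible by $d$. Capping with $[M_g]$ and using that $(b_2^g-a_1b_1^g/2)b_1^{2g}$ is P.D.\ to $[M_{IK}]=i_\ast[M_{g-1}]$, together with the projection formula, this pairing equals $\pm\, i^\ast\!\big(w\,a_1^m\big)[M_{g-1}] = \pm\, i^\ast(w)\cdot i^\ast(a_1)^m\,[M_{g-1}]$. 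Now $i^\ast(a_1)=a_1$ (the class $a_1$, unlike the $d_i$, restricts to the same-named class on $M_{g-1}$, as emphasized in Section~\ref{sec:backg}), so this is $\pm\, i^\ast(w)\cdot a_1^m\,[M_{g-1}]$. By hypothesis $a_1^m\in H^\ast(M_{g-1};\Z)$ is divisible by $d$, hence so is $i^\ast(w)\cdot a_1^m$, and therefore so is the pairing on $M_{g-1}$. Since this holds for all $w$, the element $(b_2^g-a_1b_1^g/2)b_1^{2g}a_1^m$ is divisible by $d$ in $H^\ast(M_g;\Z)$.

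The main obstacle -- or at least the point requiring the most care -- is the identification of the submanifold $M_{IK}$ (for $I=\{g\}$, $K=\{2g\}$) with the image of the standard genus-recursion embedding $M_{g-1}\hookrightarrow M_g$, and in particular the bookkeeping that $i^\ast a_1 = a_1$ while $i^\ast d_i \neq d_{g-1,i}$; this is exactly the ``Caution'' flagged in Section~\ref{sec:backg}, and one must make sure the $a_1^m$ factor is genuinely pulled back to the same $a_1$ downstairs. A secondary subtlety is the projection-formula manipulation: one should check that capping the product $(b_2^g-a_1b_1^g/2)b_1^{2g}$ with $[M_g]$ and then pairing the rest is legitimately rewritten as restriction to $M_{IK}$ followed by evaluation against $[M_{g-1}]$ -- this is the functoriality $x\cap i_\ast y = i_\ast(i^\ast x\cap y)$ for cap products combined with $\langle x, i_\ast z\rangle = \langle i^\ast x, z\rangle$, and all the signs are being suppressed anyway, so the only real content is that the P.D.\ relation (\ref{eq:pdm}) has already been proven. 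With that in hand the argument is short.
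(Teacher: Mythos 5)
There is a genuine gap: your identification of $M_{IK}$ (for $I=\{g\}$, $K=\{2g\}$) with the image of the genus-recursion embedding $M_{g-1}\hookrightarrow M_g$ is false, and the rest of the argument collapses with it. The class $(b_2^g-a_1b_1^g/2)b_1^{2g}$ has degree $4$, so $M_{IK}$ has codimension $4$ in $M_g$, i.e.\ $\dim M_{IK}=8g-6-3|I|-|K|=8g-10$, whereas $\dim M_{g-1}=8g-14$; the submanifold realizing the recursion (\ref{eq:mrec}) is the one dual to the full degree-$8$ product $b_1^gb_1^{2g}b_2^gb_2^{2g}$, not to your degree-$4$ class. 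Concretely, in $M_{IK}$ the entries $A_g$ (central) and $A_{2g}$ (in $SU(2)$) are constrained but not eliminated, so forgetting them gives a fibration $M_{IK}\to M_{g-1}$ with $4$-dimensional fiber $SU(2)\times S^1$, not a diffeomorphism. This breaks your pairing computation: with $w$ of complementary degree, $i^\ast(w)\,a_1^m$ would sit in degree $8g-10$ on $M_{g-1}$, which exceeds its top degree, so the expression $\pm\,i^\ast(w)a_1^m[M_{g-1}]$ is not even well-formed.

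The paper's proof supplies exactly the ingredient you are missing. It shows $M_{IK}=P\times S^1$ with $P$ an $SU(2)$-fibration over $M_{g-1}$ admitting a section, so by the Gysin sequence and Leray--Hirsch $H^\ast(M_{IK};\Z)$ is a free module over $H^\ast(M_{g-1};\Z)$ on which the restriction of $a_1$ is $\pi^\ast a_1$ (i.e.\ $a_1\otimes 1$). Divisibility of $a_1^m$ on $M_{g-1}$ then gives divisibility of $a_1^m|_{M_{IK}}=\pi^\ast(a_1^m)$ in $H^\ast(M_{IK};\Z)$, and pushing forward to $M_g$ (or, in your formulation, pairing $w|_{M_{IK}}\cdot\pi^\ast(a_1^m)$ against $[M_{IK}]$) yields the lemma. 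Your first step --- using (\ref{eq:pdm}) to localize onto $M_{IK}$ --- and your reduction of divisibility to integral pairings via torsion-freeness are both sound; it is only the identification of $M_{IK}$ and the attendant degree bookkeeping that must be replaced by the fibration analysis.
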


\vspace{.35cm}

\begin{proof}
    From above, we know that $(b_2^g-a_1b_1^g/2) b_1^{2g}$ is Poincar\'{e} dual to $[M_{IK}]$ where $I=\{g\}$ and $K=\{2g\}$. In this subspace, $[A_g,A_{2g}]=1$, so that always $\prod_{i=1}^{g-1} [A_i,A_{i+g}]=-1$. Thus there is a well-defined map from $M_{IK}$ to $M_{g-1}$ which forgets $A_g$ and $A_{2g}$. This is a fibration with fiber $SU(2)\times S^1$, where $S^1$ is identified with the center of $U(2)$. Because conjugation does not interact with the center of $U(2)$, we may write $M_{IK} = P\times S^1$ where $P$ is an $SU(2)$-fibration over $M_{g-1}$. The fibration $P$ has a section, given by $A_{2g}=1\in SU(2)=S^3$. Thus just as in \cite{thaddeus-intro}, the euler class of $P$ vanishes, and the Gysin exact sequence for a 3-sphere fibration implies the right-hand isomorphism below:
    
    \vspace{.3cm}
    \[
        H^\ast(M_{IK};\Z) \; \cong \; H^\ast(P;\Z)\otimes H^\ast(S^1;\Z), \qquad H^\ast(P;\Z) \; \cong \; H^\ast(M_{g-1};\Z)\otimes H^\ast(S^3;\Z)
    \]
    \vspace{.1cm}
    
    \noindent While the left-hand isomorphism above is an isomorphism of graded-commutative rings, we do not know the same for the right-hand isomorphism. However, the Leray-Hirsch Theorem tells us that this latter isomorphism respects the $H^\ast(M_{g-1};\Z)$-module structures. It is a straightforward matter to verify that $a_1\in H^2(M_g;\Z)$ goes to $a_1\otimes 1\in H^2(M_{g-1};\Z)\otimes H^0(S^3;\Z)$ under this isomorphism. The lemma then follows using the $H^\ast(M_{g-1};\Z)$-module structure.
\end{proof}

\vspace{.6cm}

\begin{prop}\label{prop:2diva1}
    For $n\geqslant 2g-1$, the element $a_1^{n}\in H^{2n}(M_g;\Z)$ is divisible by $2^{n-2g+1}$.
\end{prop}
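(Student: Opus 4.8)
The goal is an analogue of Proposition \ref{prop:2divalpha}, and the natural strategy is to mimic its proof but to account for the fact — flagged right after Theorem \ref{thm:maincomputation} — that certain pairings on $M_g$, namely those involving the $b_1^j$ and $b_2^j$ classes in an asymmetric way (e.g. $b_1^j b_2^{j+g}\cdots[M_g]$), are not directly covered by the formulas \eqref{eq:mg} and \eqref{eq:mrec}. So the first step is to reduce to pairings we can control. Using Corollary \ref{prop:powertwo} we may assume the mod $2^m$ cohomology of $M_g$ is generated by $a_1, a_2, b_1^j, b_2^j, d_{i}$ (equivalently by $a_1$ and the $z_i$, after absorbing the $(\alpha^2-\beta)/4$-type class via \eqref{eq:d2}); since $a_1 = -2z_1$, the pairing $a_1^{n+\ell}\cdot(\text{monomial})[M_g]$ becomes $2^{n+\ell}$ times a pairing of $z_{\nu_1}\cdots z_{\nu_r}$ against a monomial in the $b_i^j$. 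It therefore suffices to prove: for every partition $\nu$ and every monomial $\prod_{j\in J_1}b_1^j\prod_{j\in J_2}b_2^j$, the pairing $z_{\nu_1}\cdots z_{\nu_r}\prod b_1\prod b_2\,[M_g]$ is divisible by $2^{r-2g+1}$ whenever $r\geqslant 2g-1$ — actually with room to spare, since we have the extra factor $2^{n+\ell-r}$.

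**Main steps.** First, by Proposition \ref{prop:bterms}, the pairing vanishes unless $J_1\setminus I_1 = (J_2\setminus I_2)+g$ (notation as there); so we may assume this symmetry holds. Second, I would peel off the "matched handles": whenever both $b_1^j b_1^{j+g}$ and $b_2^j b_2^{j+g}$ occur, use \eqref{eq:mrec} to pass to $M_{g-1}$, reducing $\dim$ by $8$ and the number of handles by one, while not increasing the needed power of two by more than the codimension allows — one checks the induction bookkeeping works because each application of \eqref{eq:mrec} drops $g$ by $1$ and drops the degree available to $z$-classes by $4$, i.e. drops $r$ by at most... here one must be careful. Third, and this is the crux: after these reductions one is left with pairings of the form treated in Proposition \ref{prop:maincomputation} — namely $z_{g,\nu_1}\cdots z_{g,\nu_m}\prod_{i\in J}b_1^i b_1^{i+g}[M_k]$ — together with a residual class of "mixed" pairings involving $b_1^i b_2^{i+g}$ for unmatched indices. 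For the former, Proposition \ref{prop:maincomputation} expresses the pairing as $\tfrac{(-1)^k}{2^{2k-1}}\binom{j}{k}$ times the coefficient of a monomial symmetric function in a power series with \emph{integer} symmetric-function coefficients; so $2^{2k-1}$ times the pairing is an integer, and tracking powers of $2$ as in Proposition \ref{prop:2divalpha} gives the claimed divisibility. For the residual mixed pairings I would argue geometrically using Lemma \ref{lemma:a1rec}: that lemma shows $(b_2^g - a_1 b_1^g/2)b_1^{2g}$ behaves, as far as divisibility of $a_1$-powers is concerned, like passing to $M_{g-1}$, and I expect an analogous statement handles the one mixed handle $b_1^g b_2^{2g}$ (or $b_2^g b_1^{2g}$) by realizing the relevant submanifold $M_{IK}$ as a fibration over a lower-genus moduli space with $a_1$ pulling back to $a_1$, so that the divisibility is inherited by induction on $g$. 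Iterating, every mixed handle is removed, dropping $g$ while controlling the power of $2$, until only the $N_g$-type (equivalently Proposition \ref{prop:maincomputation}-type) pairings remain.

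**The obstacle.** The genuine difficulty is the bookkeeping of powers of $2$ through the two competing reductions — the algebraic one via \eqref{eq:mrec}/Proposition \ref{prop:maincomputation} and the geometric one via Lemma \ref{lemma:a1rec} — because each changes $g$, the top degree, and the count $r$ of $z$-factors at different rates, and the binomial coefficients $\binom{j}{k}$ appearing in Proposition \ref{prop:maincomputation} can themselves be even (which only helps, but must be accounted for correctly). The key inequality to verify is that at each stage the exponent of $2$ one is guaranteed is at least $n - 2g + 1$ for the \emph{current} values of $n$ and $g$; since passing from $M_g$ to $M_{g-1}$ decreases $g$ by $1$ but also decreases the available top degree by $4$ (hence decreases the relevant $n$ by $2$ after accounting for $a_1 = -2z_1$), the target exponent $n-2g+1$ changes by $-2-(-2) = 0$, so the bound is preserved. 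Once this invariant is set up correctly, the proof assembles by induction on $g$ with base case $g=1$ (or $g$ small enough that $M_g$ is a point or low-dimensional), exactly parallel to how Corollary \ref{cor:1pairmg} and Proposition \ref{prop:2divalpha} combine to pin down the nilpotency degree.
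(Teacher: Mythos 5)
Your overall strategy matches the paper's: reduce to pairings of monomials in the generators, invoke Proposition \ref{prop:bterms} to force the symmetry $J_1\setminus I_1=(J_2\setminus I_2)+g$, handle the pure-$b_1$ pairings via Proposition \ref{prop:maincomputation} exactly as in Proposition \ref{prop:2divalpha}, and deal with the remaining terms by descending in genus. But the place you hedge --- ``I expect an analogous statement handles the one mixed handle $b_1^gb_2^{2g}$ by realizing the relevant submanifold $M_{IK}$ as a fibration over a lower-genus moduli space'' --- is exactly where the real content lies, and as stated it has a gap. Lemma \ref{lemma:a1rec} and the submanifold description \eqref{eq:pdm} apply to the \emph{integral} class $(b_2^g-a_1b_1^g/2)\,b_1^{2g}$, not to $b_1^gb_2^{2g}$ itself; there is no reason to expect the Poincar\'e dual of $b_1^gb_2^{2g}$ alone to be a fibration over $M_{g-1}$ with $a_1$ pulling back to $a_1$. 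One must first substitute $b_2^{2g}=(b_2^{2g}-a_1b_1^{2g}/2)+a_1b_1^{2g}/2$, and the correction term produces the pairing $a_1^{n+1}b_1^gb_1^{2g}x[M_g]/2$, i.e.\ a \emph{higher} power of $a_1$ divided by $2$. The paper controls this by a downward induction on the exponent $n$ (the hypothesis $a_1^q\equiv 0 \pmod{2^{q-2g+1}}$ for all $q>n$, anchored at large $q$ by nilpotency of $a_1$), in addition to the induction on $g$. Your proposal sets up induction only on $g$ with base case $g=1$, so the correction term is unaccounted for. The same substitution and the same $n$-induction are also needed in the case where $b_2^jb_2^{j+g}$ appears without the accompanying $b_1^jb_1^{j+g}$ (your ``peeling'' via \eqref{eq:mrec} requires all four classes of a handle, so it does not cover this case); the paper instead converts $b_2^j-a_1b_1^j/2$ to $\psi_j$ via $p^\ast$ and uses \eqref{eq:cup} and \eqref{eq:psi}.

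A secondary, more minor point: your power-of-two bookkeeping for the genus descent is more delicate than it needs to be. In the paper's argument the exponent of $a_1$ does not drop when passing from $M_g$ to $M_{g-1}$ via Lemma \ref{lemma:a1rec}; only $g$ drops, so the inductively available divisibility $2^{n-2(g-1)+1}=2^{n-2g+3}$ actually \emph{exceeds} the target $2^{n-2g+1}$, giving slack rather than the exact balance you describe. Once you add the integral correction of the $b_2^j$ classes and the downward induction on $n$, your outline assembles into essentially the paper's proof.
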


\vspace{.25cm}

\begin{proof} The proof is by induction. We assume the result holds for $a_1\in H^2(M_{k};\Z)$ for $k\leqslant g-1$. Further, we add the induction hypothesis that $a_1^q \equiv 0$ (mod $2^{q-2g+1}$) for $q>n$. Note that this is automatically true for $q$ large enough, since $a_1$ is nilpotent.\\

From Section \ref{sec:mg}, we gather that the residue classes of $a_1,d_{i},b_1^j,b_2^j$ for $2\leqslant i \leqslant 2g-1$ and $1\leqslant j\leqslant 2g$ generate the mod $2^m$ cohomology ring of $M_g$ for any $m\geqslant 1$, and in particular $m=n-2g+1$. It suffices then to show that for every partition $\lambda$, subsets $J_1,J_2\subset\{1,\ldots,2g\}$, and $\ell\geqslant 0$ we have

\vspace{.3cm}

\begin{equation}
    a_1^{n+\ell}d_{\lambda_1}\cdots d_{\lambda_k}\prod_{j\in J_1}b_1^j\prod_{j\in J_2}b_2^j [M_g] \; \equiv \; 0 \mod 2^{n-2g+1}.\label{eq:a1div}
\end{equation}

\vspace{.3cm}

\noindent The case in which $J_2$ is empty follows the argument of Proposition \ref{prop:2divalpha}, but this time using  Proposition \ref{prop:maincomputation}. We do not use any induction hypothesis here.\\

By Proposition \ref{prop:bterms}, if $J_2$ is not empty, then either the left side of (\ref{eq:a1div}) vanishes, or at least one of two kinds of terms appears: $b_2^jb_2^{j+g}$ or $b_1^jb_2^{j+g}$. Without loss of generality we will suppose $j=g$.\\

First suppose $b_1^gb_2^{2g}$ appears in (\ref{eq:a1div}). Let $x$ denote the monomial in (\ref{eq:a1div}) omitting this term and $a_1^n$. We must show $2^{n-2g+1}$ divides $a_1^nb_1^g b_2^{2g}x[M_g]$. First note that we can replace $b_2^{2g}$ by $b_2^{2g}-a_1b_1^{2g}/2$. Indeed,  $a_1^{n+1}b_1^gb_1^{2g}x[M_g]/2$ is divisible by $2^{(n+1)-2g+1}/2$ using the induction hypothesis on $n$. Finally,

\vspace{.3cm}

\[
    a_1^n(b_2^{2g}-a_1b_1^{2g}/2)x[M_g] \; \equiv \; 0 \; \mod 2^{n-2g+1}
\]

\vspace{.3cm}

\noindent using Lemma \ref{lemma:a1rec} and the induction hypothesis on $g$. Thus the case in which $b_1^gb_2^{2g}$ appears is done.\\ 

Now suppose that $b_2^g b_2^{2g}$ appears in (\ref{eq:a1div}). Just as was done in the previous case, we may replace each $b_2^j$ here with $b_2^j-a_1b_1^j/2$, and upon pulling back via the covering $p$, we get $\psi_g\psi_{2g}$, and the result follows from induction on $g$, using (\ref{eq:cup}) and Thaddeus's genus recursive formula (\ref{eq:psi}). This exhausts all cases and completes the proof of the proposition, as well as the proof of Theorem \ref{thm:nilp}.
\end{proof}

\vspace{.65cm}

\section{Computations}\label{sec:comps}

In this section we give some examples of the pairings on $N_g$ calculated by Theorem \ref{thm:maincomputation} and describe the ring structure $H^\ast(N_g;\Z_2)$ for low values of $g$. Much of this discussion can be carried out for the moduli space $M_g$, but we will not pursue this.\\

First, recall that Theorem \ref{thm:maincomputation} computes the pairings involving the classes $\xi_{g,i}$. The pairings are encoded in the Chern number polynomial $\cn(Z_g|_{N_k})$, which is equal to (\ref{eq:cnxi}). Formula (\ref{eq:ng}) easily computes this polynomial using a program such as \texttt{Sage}, which has symmetric function methods available. For example, we have $\cn(Z_1|_{N_1})=1$, $2\cn(Z_2|_{N_2}) = -m_{(1^3)} - 2m_{(21)}$, and 

\vspace{.05cm}

\begin{align*}
2^2\cn(Z_3|_{N_3}) & \; = \;14m_{(1^6)} + 17m_{(21^4)} + 26m_{(2^21^2)} + 28m_{(2^3)} + 9m_{(31^3)}\\
 & \qquad + 12m_{(321)} + 6m_{(3^2)} + 6m_{(41^2)} + 3m_{(42)}.
\end{align*}

\vspace{.35cm}

\noindent These polynomials quickly become quite lengthy. For example, if we compute the genus $4$ polynomial in terms of elementary symmetric functions $e_\lambda$ we find

\begin{align*}
    2^3\cn(Z_4|_{N_4}) \; & = \; -4e_{(2^31^3)} + 18e_{(32^21^2)} - 44e_{(3^221)} + 65e_{(3^3)} + 36e_{(421^3)} -100e_{(431^2)}\\
    &\qquad - 44e_{(521^2)} + 150e_{(531)} -20e_{(61^3)} +27e_{(71^2)}.
\end{align*}

\vspace{.35cm}

\noindent If we instead write this same polynomial in terms of monomial symmetric functions $m_\lambda$ then it has 26 non-zero terms. Similarly, the corresponding genus 5 polynomial has 20 non-zero terms when written using the $e_\lambda$, and 70 non-zero terms when using the $m_\lambda$.\\

Of course, $\cn(Z_g|_{N_k})$ is of intermediary interest to us: our goal was to compute $\cn(f_! V_g |_{N_k})$, the polynomial encoding the pairings involving the $\delta_{g,i}$ classes. We can compute these using the Chern number polynomial for $Z_g|_{N_k}$ via the transformations (\ref{eq:defxi}). These are typically more complicated, however. For example, we have $\cn(f_! V_1 |_{N_1})=1$, $\cn(f_! V_2 |_{N_2})=4m_{(1^3)} + 3m_{(21)} + m_{(3)}$, and

\vspace{.05cm}

\begin{align*}
\cn(f_! V_3 |_{N_3}) & \; = \; 14336m_{(1^6)} + 6464m_{(21^4)} + 2936m_{(2^21^2)} + 1339m_{(2^3)} + 1568m_{(31^3)}\\ & \qquad  + 722m_{(321)} + 182m_{(3^2)} + 212m_{(41^2)} + 98m_{(42)} + 14m_{(51)}
\end{align*}

\vspace{.35cm}

\noindent For genus 4, there are 28 non-zero coefficients whether we use the basis $e_\lambda$ or $m_\lambda$, while for genus 5, there are 73 non-zero coefficients in either basis. In each case, respectively, $28$ and $73$ is the number of monomials in the $\delta_i$ classes of top degree, so every possible pairing is non-zero. We have focused on the cases $g=k$ for simplicity; when $k<g$ the computations are somewhat similar.\\

When we consider the pairings only modulo 2 which are relevant for $H^\ast(N_g;\Z_2)$ the situation is considerably more manageable. First, we recall from Corollary \ref{cor:nggens} that the residue classes of $\alpha, \delta_{2^i}, \psi_j$ generate the ring $H^\ast(N_g;\Z_2)$, where $2\leqslant 2^i\leqslant 2g-1$ and $1\leqslant j \leqslant 2g$. We can obtain pairing formulas for these classes from the above data as follows. Let $\mathscr{P}(2)$ be the set of partitions each of whose parts is a power of 2. Thus $(4,2,2,1)\in \mathscr{P}(2)$ but $(6,4,2,1)\notin\mathscr{P}(2)$. For $\lambda\in \mathscr{P}(2)$ let $m_1$ denote the number of 1's in $\lambda$, and let $\lambda^\#$ denote the partition obtained from $\lambda$ by removing all of its 1's, so in particular $m_1 = |\lambda|-|\lambda^\#|$. Set $\delta_{g,\lambda} \; := \; \delta_{g,\lambda_1}\cdots \delta_{g,\lambda_n}$. Now we define the following:

\vspace{.3cm}

\[
    P_{g,k} \; := \; \sum_{\lambda\in \mathscr{P}(2)} \alpha^{m_1} \delta_{g,\lambda^\#}[N_k] \cdot m_\lambda \qquad \mod 2
\]

\vspace{.3cm} 

\noindent Then the collection of $P_{g,k}$ with $1\leqslant k \leqslant g$ determines the ring structure of $H^\ast(N_g;\Z_2)$. Indeed, it is evident that $P_{g,g}$ encodes all mod 2 pairings involving the generators $\delta_{g,2^i}$ and $\alpha$, while, for example, the pairing $\alpha^{m_1}\delta_{g,\lambda^\#}\psi_1\psi_{1+g}\cdots\psi_{g-k}\psi_{2g-k}[N_g]$ is equal to the coefficient of $m_\lambda$ in $P_{g,k}$. Recalling that $\delta_{g,1} = (g-1)\alpha$, we have the following, which tells us how to compute $P_{g,k}$ from the $\delta_{g,i}$ pairings:

\vspace{.3cm} 

\[
    \underset{m_\lambda}{\text{Coeff}}\, \Big[ P_{g,k} \Big] \; \equiv \; \underset{m_\lambda}{\text{Coeff}} \, \Big[ \cn(f_!V_g|_{N_k}) / (g-1)^{m_1} \Big] \mod 2
\]

\vspace{.3cm}

\noindent Here $\lambda\in \mathscr{P}(2)$. The polynomials $P_{g,k}$ are presented up to genus $8$ in Table \ref{table:mod2}.\\

We remark that the computations of Chern numbers and hence that of Table \ref{table:mod2} could have also been done without using Theorem \ref{thm:maincomputation}. Indeed, one can write out the $\delta_{g,i}$ classes as rational functions of $\alpha,\beta,\gamma$ using (\ref{eq:zagchern}) and (\ref{eq:defxi}), and then apply Thaddeus's intersection pairing formula for $\alpha^i\beta^j\gamma^k$ from Section \ref{sec:intersection} term-wise. As an illustration of this, we may write $\delta_8=\delta_{6,8}\in H^{16}(N_6;\Z)$ as follows:

\vspace{.3cm} 
\begin{align*}
\delta_{6,8}  \; = \; & \textstyle \left(\frac{3184129}{10321920}\right)\alpha^8 - \left(\frac{351163}{368640}\right)\alpha^6\beta + \left(\frac{747229}{737280}\right)\alpha^4\beta^2 + \left(\frac{3539}{23040}\right)\alpha^5\gamma 
 -\left(\frac{1044149}{2580480}\right)\alpha^2\beta^3\\
 & \\
& \textstyle - \left(\frac{1061}{3840}\right)\alpha^3\beta\gamma + \left(\frac{1155}{32768}\right)\beta^4 + \left(\frac{18829}{161280}\right)\alpha\beta^2\gamma
 + \left(\frac{13}{576}\right)\alpha^2\gamma^2 - \left(\frac{31}{2880}\right)\beta\gamma^2
\end{align*}
\vspace{.3cm} 

\noindent We also compute $\delta_{6,2} = \frac{91}{8}\alpha^2 - \frac{11}{8}\beta$. Then we can apply Thaddeus's intersection pairing formula to the terms of $\alpha^5\delta_{6,2}\delta_{6,8}$ and sum to obtain $117071517415$. This number is odd, and accounts for the partition $(8,2,1,1,1,1,1)$ appearing the first column of row $g=6$ in Table \ref{table:mod2}.\\

From Table \ref{table:mod2} we can read off the ring structure of $H^\ast(N_g;\Z_2)$ for $1\leqslant g\leqslant 8$, and we will spell this out for $1\leqslant g\leqslant 4$. We make a few preliminary remarks. We know from Cor. \ref{cor:nggens} that $H^\ast(N_g;\Z_2)$ is generated by $\alpha,\delta_{2^i},\psi_j$ for $2\leqslant 2^i\leqslant 2g-1$ and $1\leqslant j\leqslant 2g$. We write $I(N_g;R) \subset H^\ast(N_g;R)$ for the subring invariant under the $\text{Sp}(2g,\Z)$-action, where $R$ is any ring. It is well known that $I(N_g;\mathbb{Q})$ is generated by $\alpha,\beta,\gamma$ and that a monomial basis for the vector space $I(N_g;\mathbb{Q})$ is given by

\vspace{.2cm}
\[
    \{ \alpha^i\beta^j\gamma^k: \;\; i,j,k\geqslant 0,\;\; i+j+k < g\},
\]
\vspace{.2cm}

\noindent see for example \cite[\S 5]{siebert-tian}. In particular, $\dim I(N_g;\Q) = g(g+1)(g+2)/6 = T_g$, the $g^\text{th}$ Tetrahedral number. This in fact holds for any field, and in particular $\Z_2$. From Prop. \ref{prop:inv} we know that $I(N_g;\Z_2)$ is generated by $\alpha,\delta_{2^i},\upsilon_{2^j}$ for $2\leqslant 2^i\leqslant 2g-1$ and $1\leqslant 2^j<g$. We now proceed to describe the rings $H^\ast(N_g;\Z_2)$ and their invariant subrings for $1\leqslant g\leqslant 4$.\\

\noindent \textbf{Genus 1:} In this case, $N_1$ is a point, so $H^\ast(N_1;\Z)\cong\Z$.\\

\vspace{.3cm}

\noindent \textbf{Genus 2:} Even over $\Z$ this ring is simple to describe, cf. \cite[\S 10]{newstead-top}. As remarked there, the only interesting cup product in $H^\ast(N_2;\Z)$ is $\alpha^2$, which is $4$ times an integral generator, equal to $\alpha^2-\delta_2$. Thus the ring $H^\ast(N_2;\Z_2)$, which has betti numbers $1,0,1,4,1,0,1$, has the residue classes of $\alpha$ in degree 2 and $\delta_2$ in degree 4, and $\alpha^2=0$ (mod 2). The classes $\psi_1,\psi_2,\psi_3,\psi_4$ generate the 4-dimensional middle cohomology group, and $\alpha\delta_2$, $\psi_1\psi_3$, $\psi_2\psi_4$ are all equal to the non-zero top degree element in $H^6(N_2;\Z_2)$, while all other pairings are zero. The invariant subring $I(N_2;\Z_2)$ is generated by $\alpha$ and $\delta_2$ and has betti numbers $1,0,1,0,1,0,1$.\\

\vspace{.3cm}

\noindent \textbf{Genus 3:} The ring $H^\ast(N_3;\Z_2)$ has betti numbers $1,0,1,6,2,6,16,6,2,6,1,0,1$. It is generated by $\alpha,\delta_2,\delta_4$ and $\psi_j$ for $1\leqslant j \leqslant 6$. The nontrivial pairings in top degree, as can be read from Table \ref{table:mod2}, are

\vspace{.2cm}
\[
    \alpha^2\delta_4, \quad \delta_2^3, \qquad \psi_j\psi_{j+g}\alpha\delta_2\; (1\leqslant j\leqslant 3), \qquad \psi_j\psi_{j+g}\psi_k\psi_{k+g} \; ( 1\leqslant j\neq k \leqslant 3)
\]
\vspace{.2cm}

\noindent The invariant subring $I(N_3;\Z_2)$, which has betti numbers $1,0,1,0,2,0,2,0,2,0,1,0,1$, is generated by $\alpha,\delta_2,\delta_4$ and $\upsilon_1,\upsilon_2$. We can compute a presentation for the invariant ring:

\vspace{.2cm}
\[
I(N_3;\Z_2) \; \cong \;
\Z_2[\alpha,\;\delta_2,\;\delta_4,\;\upsilon_1,\upsilon_2]/(\upsilon_1^2, \; \delta_4\upsilon_1,\; \delta_4^2,\; \delta_2\delta_4, \; \delta_1\delta_4+\delta_2\upsilon_1,\; \delta_2^2+\delta_1\upsilon_1,\; \delta_1^2\upsilon_1,\; \delta_1^2\delta_2, \;\delta_1^3)
\]
\vspace{.2cm}

\noindent We remind the reader that $\upsilon_1 = \psi_1\psi_4 + \psi_2\psi_5 + \psi_3\psi_6$ and $\upsilon_2 = \psi_1\psi_4\psi_2\psi_5 + \psi_1\psi_4\psi_3\psi_6 + \psi_2\psi_5\psi_3\psi_6$. Note here that $\delta_2^3$ is nonzero. This property seems to possibly persist for all $\delta_{g,2}\in H^\ast(N_g;\Z_2)$, and can perhaps be proven using the same methods used to prove Theorem \ref{thm:nilp}.\\

\vspace{.3cm}

\noindent \textbf{Genus 4:} The ring $H^\ast(N_3;\Z_2)$ has betti numbers $1,0,1,8,2,8,30,16,30,64,30,16,30,8,2,8,1,0,1$. It is generated by $\alpha,\delta_2,\delta_4$ and $\psi_j$ for $1\leqslant j\leqslant 8$. The only non-trivial pairings in the top degree, as read from Table \ref{table:mod2}, are the following, in which $1\leqslant j,k,\ell\leqslant 4$ are distinct:

\vspace{.2cm}
\[
    \alpha\delta_2^2\delta_4, \quad \alpha^3\delta_2\delta_4, \quad \psi_j\psi_{j+g}\alpha^2\delta_4, \quad  \psi_j\psi_{j+g}\delta_2\delta_4,
\]
\[
    \psi_j\psi_{j+g}\delta_2^3, \quad  \psi_j\psi_{j+g}\psi_k\psi_{k+g}\alpha\delta_2, \quad \psi_j\psi_{j+g}\psi_k\psi_{k+g}\psi_\ell\psi_{\ell+g}
\]
\vspace{.2cm}

\noindent The invariant ring $I(N_4;\Z_2)$ has betti numbers $1,0,1,0,2,0,3,0,3,0,3,0,3,0,2,0,1,0,1$. It is generated by $\alpha,\delta_2,\delta_4,\upsilon_1,\upsilon_2$, just like the genus 3 case. The ideal of relations here is generated by:

\vspace{.2cm}
\[
\alpha^4, \; \alpha^2 \delta_2 + \alpha \upsilon_1 + \delta_2^2,\; \delta_2 \alpha^3 + \alpha \delta_2^2, \alpha^2 \upsilon_1,\; \alpha^2 \delta_2^2,\; \alpha \upsilon_2,
\]
\[ \upsilon_1^2,\; \upsilon_2^2,\; \delta_2^3 \upsilon_1+\alpha^2 \delta_4 \upsilon_1,\; \delta_2^3 + \upsilon_2,\; \upsilon_1 \upsilon_2,\; \delta_4 \upsilon_1,\; \delta_4^2
\]
\vspace{.2cm}

\noindent We will stop here, but the interested reader can proceed to describe the higher genus cases up to $g=8$ using Table \ref{table:mod2}. Also, one can similarly describe the rings $H^\ast(N_g;\Z_p)$ for other primes $p$ using our Chern number computations and some additional work. \\

\begin {table}
\begin{center}
\caption {Partitions $\lambda\in \mathscr{P}(2)$ for which $\underset{m_\lambda}{\text{Coeff}}\,[ P_{g,k}]\equiv 1$ (mod 2)}\label{table:mod2}
\vspace{.3cm}
\scalebox{0.9}{\begin{tabular}{ ccccccccc } 
    & $k=0$ & $k=1$ & $k=2$ & $k=3$ & $k=4$ & $k=5$ & $k=6$ & $k=7$ \\
\toprule
$g=1$  & $0$ &   &   &   &   &   &   &   \\
&$\phantom{(111111)}$&$\phantom{(111111)}$&$\phantom{(111111)}$&$\phantom{(111111)}$&$\phantom{(111111)}$&$\phantom{(111111)}$&$\phantom{(111111)}$&$\phantom{(111111)}$\\
$g=2$  & $(2^1 1^1)$ & $0$  &   &   &   &   &   &   \\
&&&&&&&&\\
$g=3$  & $(4^1 1^2)$ & $(2^1 1^1)$ & $0$ &  &  &  &  &  \\
       & $(2^{3})$ & & & & & & &\\
       &&&&&&&&\\
$g=4$  & $(4^1 2^2 1^1)$ & $(4^1 1^2)$ & $(2^1 1^1)$ & $0$ &  &  &  &  \\
       & $(4^1 2^1 1^3)$  & $(4^1 2^1)$ &  & & & & & \\
       & &  $(2^{3})$  & & & & & & \\
       &&&&&&&&\\
$g=5$  & $(8^1 2^1 1^2)$  & $(4^1 2^1 1^3)$ &  $(4^1 1^2)$ & $(2^11^1)$ & $0$ &  &  &  \\
       & $(4^1 2^3 1^1)$ & $(4^1 2^2 1^1)$ & $(2^1 4^1)$ & & & & & \\
       & $(8^1 1^4)$ & & $(2^3)$ & & & & & \\
       & $(4^3)$ &&&&&&&\\
&&&&&&&&\\
$g=6$  & $(8^1 2^2 1^3)$ & $(4^1 2^3 1^2)$ & $(4^2 2^1 1^3)$ & $(4^1 1^2)$ & $(2^1 1^1)$ & $0$ &  &  \\
       & $(8^1 2^1 1^5)$ & $(8^1 2^2)$  & $(4^1 2^2 1^1)$  & $(2^3)$ & & & & \\
       & $(8^1 2^3 1^1)$ & $(8^1 1^4)$ & & & & & & \\
       & $(4^3 2^1 1^1)$ & $(4^3)$ & & & & & & \\
&&&&&&&&\\
$g=7$  & $(8^1 4^1 2^2 1^2)$ &  $(8^1 4^1 2^1 1^1)$  & $(4^1 2^3 1^2)$ & $(4^1 2^1 1^3)$ & $(4^1 1^2)$ & $(2^1 1^1)$  &  $0$ &  \\
       & $(8^1 2^3 1^4)$ &$(8^1 2^2 1^3)$& $(8^1 1^4)$ & $(4^1 2^2 1^1)$ & $(2^3)$ & & &\\
       & $(8^1 4^2 1^2)$ & $(8^1 2^3 1^1)$ & $(4^3)$ & & & & &\\
       & $(8^1 4^1 1^6)$ & $(8^1 4^1 1^3)$ & & & & & &\\
       & $(4^3 2^3)$ & $(4^3 2^1 1^1)$ & & & & & &\\
       & $(8^1 2^5)$ & $(8^1 2^1 1^5)$ & & & & & &\\
&&&&&&&&\\
$g=8$  & $(8^1 4^1 2^3 1^3)$ & $(8^1 4^1 2^1 1^4)$  & $(8^1 4^1 2^1 1^1)$ & $(8^1 2^1 1^2)$  & $(4^1 2^2 1^1)$ & $(4^1 2^1)$ & $(2^1 1^1)$ & $0$ \\
      & $(8^1 4^1 2^2 1^5)$ & $(8^1 4^2 2^1)$ & $(8^1 4^1 1^3)$ & $(4^1 2^3 1^2)$  &  $(4^1 2^1 1^3)$ & $(4^1 1^2)$ & &\\
      & $(8^1 4^2 2^11^3)$ & $(8^1 4^2 1^2)$ & $(8^1 2^3 1^1)$ & $(8^1 4^1)$ & & $(2^3)$ & &\\
      & $(8^1 4^2 2^2 1^1)$ & $(8^1 4^1 2^3)$ & $(8^1 2^2 1^3)$ & $(8^1 2^2)$ & & & &\\
      & $(8^1 4^1 2^4 1^1)$ & $(8^1 4^1 1^6)$ & $(8^1 2^1 1^5)$ & $(8^1 1^4)$ & & & &\\
      & $(8^1 4^1 2^1 1^7)$ & $(8^1 2^3 1^4)$ & $(4^3 2^1 1^1)$ & $(4^3)$ & & & &\\
      && $(8^1 2^5)$ &&&&&&\\
      && $(4^3 2^3)$ &&&&&&\\
\bottomrule
\end{tabular}}
\end{center}
\caption*{\small{A partition $(8^a 4^b 2^c 1^d)$ appears in row $g$ and column $k$ of this table if and only if the monomial $\mu = \phi\delta_8^a\delta_4^b\delta_2^c\alpha^d$ is nonzero in the ring $H^\ast(N_g;\Z_2)$, i.e. $\mu[N_g]\equiv 1$ (mod 2), where $\phi = \psi_1\psi_{1+g}\cdots \psi_{k}\psi_{k+g}$.}}
\vspace{.2cm}
\end{table}

\vspace{.65cm}

\appendix
\section{Background on symmetric functions}\label{appendix}

In this section we provide the reader with the relevant background material on symmetric polynomials. For details and proofs, see \cite{mac}. We will typically work with symmetric functions in infinitely many variables $x_1,x_2,x_3,\ldots$ with either integer or rational coefficients.\\

There are a few standard symmetric functions that will be of use to us. First, for any positive integer $n$, we have the {\emph{elementary}} symmetric function $e_n$, given by

\vspace{.18cm}
\[
    e_n \; = \; \sum_{i_1 < i_2 <\cdots < i_n} x_{i_1}x_{i_2}\cdots x_{i_n}.
\]
\vspace{.1cm}

\noindent If $\lambda = (\lambda_1,\lambda_2,\ldots,\lambda_k)$ is a partition, i.e. a nonincreasing sequence of nonnegative integers, then we define $e_{\lambda} = e_{\lambda_1}e_{\lambda_2}\cdots e_{\lambda_k}$. If in the definition of $e_n$ one sums over $i_1\geqslant i_2 \geqslant \cdots \geqslant i_n$ instead, the result is the {\emph{complete}} symmetric function $h_n$, and we may similarly define $h_{\lambda}$. For $n=0$, set $e_0=h_0=1$.\\

Next, for any given partition $\lambda$, we have the {\emph{monomial}} symmetric function $m_\lambda$, which is the sum of all distinct monomials of the form $x_{i_1}^{\lambda_1}x_{i_2}^{\lambda_2}\cdots x_{i_k}^{\lambda_k}$ in which $i_1,\ldots,i_k$ are distinct. Although we do not make much use of them, we also define the {\emph{power sum}} symmetric function $p_n$ by

\vspace{.18cm}
\[
    p_n \; = \; \sum_{i\geqslant 0} x_i^{n}.
\]
\vspace{.1cm}

\noindent In Section \ref{sec:skew} we define (skew) Schur symmetric functions $s_\lambda$. It is often convenient to write a partition $\lambda=(\lambda_1,\lambda_2,\ldots,\lambda_k)$ in the alternative format $\lambda = (1^{m_1}2^{m_2}\cdots k^{m_k})$ in which $\lambda$ has $m_i$ number of parts equal to $i$. For example, the partition $(2,2,1,1,1)$ can be written instead as $(1^{3}2^2)$. We write $|\lambda| = \sum_{i=1}^k \lambda_k$ for the sum of a partition, and $l(\lambda)=k$ for its length. Sometimes we insert commas for clarity; the last partition may be written as $( 2^2 \, 1^3)$.\\

We write $\Lambda$ for the ring of symmetric functions with integer coefficients. The Fundamental Theorem of Symmetric Functions says that $\Lambda$ is isomorphic to the ring $\Z[e_1,e_2,\ldots]$ freely generated by the $e_i$. The statement also holds with the $e_i$ replaced by $h_i$. Also, the sets 

\vspace{.18cm}
\[
    \{e_\lambda\}, \quad \{h_\lambda\}, \quad \{m_\lambda\}, \quad \{s_\lambda\},
\]
\vspace{.1cm}

\noindent where $\lambda$ runs over all partitions, each separately provides an additive basis for $\Lambda$. If we work instead with rational coefficients, then the ring of symmetric functions $\Lambda\otimes_\Z \Q$ is isomorphic to the freely generated algebra $\Q[p_1,p_2,\ldots]$, and $\{p_\lambda\}$ provides an additive basis for the vector space $\Lambda\otimes_\Z\Q$.

\vspace{.65cm}

\bibliographystyle{alpha}
\bibliography{main}

\end{document}